\numberwithin{equation}{section}
\newcommand{\mc}{\mathcal}
\newcommand{\pl}{\partial}
\newcommand{\cjd}{\rangle}
\newcommand{\cjg}{\langle}
\newcommand{\R}{\mathbb{R}}
\newcommand{\supp}{\operatorname{supp}}
\newcommand{\Z}{\mathbb{Z}}
\newtheorem{prop}{Proposition}[section]
\newtheorem{lemma}[prop]{Lemma}
\newtheorem{teor}{Theorem}
\theoremstyle{remark}
\author[Víctor Arnaiz and Colin Guillarmou]{Víctor Arnaiz and Colin Guillarmou}
\address{Université Paris-Saclay, CNRS, Laboratoire de mathématiques d’Orsay, 91405, Orsay, France.}
\email{victor.arnaiz@universite-paris-saclay.fr}
\email{colin.guillarmou@universite-paris-saclay.fr}
\title[Stability estimates for wave and Schr\"odinger]{Stability estimates in inverse problems for the Schrödinger and wave equations with trapping}
\begin{document}

\begin{abstract}
For a class of Riemannian manifolds with boundary that includes all negatively curved manifolds with strictly convex boundary, we establish H\"older type stability estimates in the geometric inverse problem of determining the electric potential or the conformal factor from the Dirichlet-to-Neumann map associated with the Schrödinger equation and the wave equation. The novelty in this result lies in the fact that we allow some geodesics to be trapped inside the manifold and have infinite length.

\end{abstract}

\dedicatory{Dedicated to the memory of Slava Kurylev}

\maketitle


\section{Introduction}

In this article we study a geometric inverse problem associated with the anisotropic Schrödinger equation and the wave equation on a compact Riemannian manifold $(M,g)$ with boundary $\partial M$.

Let $\Delta_g$ be the non-negative Laplace-Beltrami operator associated with the metric $g$, we consider two initial-value-problems. First, we consider the Schrödinger equation for finite time of propagation and with Dirichlet conditions:
\begin{equation}
\label{e:IVP}
\left \lbrace \begin{array}{ll}
(i \partial_t - \Delta_g + q(x) ) u(t,x) = 0, & \quad \text{in } \; (t,x) \in  I \times M, \\
u(0,\cdot) = 0, & \quad \text{in } \; x \in M, \\
u(t,x) = f(t,x), & \quad \text{on } \; (t,x) \in I \times \partial M,
\end{array} \right.
\end{equation}
where $I = (0,T)$ for $T > 0$ fixed. Secondly, we consider the wave equation for infinite time of propagation and with Dirichlet conditions:
\begin{equation}
\label{e:IVP2}
\left \lbrace \begin{array}{ll}
(\partial_t^2 + \Delta_g + q(x)) u(t,x) =0, & \quad \text{in } \; (t,x) \in  I \times M, \\
u(0,\cdot) = 0, \; \partial_t u(0,\cdot) = 0, & \quad \text{in } \; x \in M, \\
u(t,x) = f(t,x), & \quad \text{on } \; (t,x) \in I \times \partial M,
\end{array} \right.
\end{equation}
where $I = (0,T)$ and $T$ can be equal to $+\infty$. 

We aim at studying the problem of the stable recovery of the potential $q$, or alternatively conformal factor in a conformal class of a metric $g$, from the \textit{Dirichlet-to-Neumann map} associated with \eqref{e:IVP} and \eqref{e:IVP2}. 
The Dirichlet-to-Neumann map  (DN map in short) is the operator defined for each $T<\infty$
\[ \begin{gathered}
\Lambda_{g,q}^S: H_0^1([0,T)\times \pl M)\to L^2((0,T)\times \pl M), \quad 
\Lambda_{g,q}^S f : = - \pl_{{\rm n}}u^S|_{(0,T) \times \partial M},\\
\Lambda_{g,q}^W: H_0^1([0,T) \times \pl M)\to L^2((0,T)\times \pl M), \quad 
\Lambda_{g,q}^W f : = - \pl_{{\rm n}}u^W|_{(0,T)\times \partial M},
\end{gathered}\]
where $u^S$ solves \eqref{e:IVP} and $u^W$ solves \eqref{e:IVP2}, and $\pl_{\rm n}$ is the unit inward normal derivative at $\partial M$. Here $H_0^1([0,T)\times \pl M)$ denotes the closed subspace of functions in $H^1([0,T)\times \pl M)$ vanishing at $t=0$.
For the wave equation, we shall need to consider the case $T=\infty$, and we will show that there is $\nu_0\geq 0$ depending only on $\|q\|_{L^\infty}$ so that for all $\nu \geq \nu_0$
\[\Lambda_{g,q}^W: e^{\nu t}H_0^1(\R_+ \times \pl M)\to e^{\nu t}L^2(\R_+\times \pl M)\]
is bounded.

By a stability estimate, we mean that there is a constant $C>0$, possibly depending on some a priori bound on $\|q_j\|_{H^s(M)}$ for some $s\geq 0$ such that  an estimate of the following form holds
\[ \|q_1-q_2\|_{L^2(M)}\leq C F(\|\Lambda^{S/W}_{g,q_1}-\Lambda^{S/W}_{g,q_2}\|_{*,\nu}),\]
where the used norm for the Schr\"odinger/wave DN map are respectively 
\[\|\cdot\|_{*}=\|\cdot\|_{H^1(I\times \pl M)\to L^2(I\times \pl M)}, \quad 
\|\cdot\|_{*,\nu}=\|\cdot\|_{e^{\nu t}H_0^1(I\times \pl M)\to e^{\nu t}L^2(I\times \pl M)}\] 
and $F$ is a continuous function satisfying $F(0)=0$; we shall write simply $\|\cdot\|_*$ for the wave case when $I=[0,T]$ with $T<\infty$, and $\nu=0$. We say that the stability is of H\"older type if $F(x)=x^{\beta}$ for some $\beta>0$, 
it is said of log-type if $F(x)=\log(1/x)^{-\beta}$ for some $\beta>0$. More generally, one can ask 
if there is a stability for the problem of recovering the metric, i.e. 
\[ \|g_1-\psi^*g_2\|_{L^2(M)}\leq C F(\|\Lambda^{S/W}_{g_1,0}-\Lambda^{S/W}_{g_2,0}\|_{*})\]
for some diffeomorphism $\psi$ (depending on $g_1,g_2$). Here we have used the $L^2$ norm on $M$ to measure $q_1-q_2$, but one could also ask the same question for Sobolev or H\"older norms. Assuming a priori bounds on $q$ in some large enough Sobolev spaces $H^{s_0}(M)$ allows to deduce (by interpolation) bounds on $\|q_1-q_2\|_{H^s}$ for $s<s_0$ if one has bounds on $\|q_1-q_2\|_{L^2}$ (and similalry for $g_1-g_2$).\\    

The problem of determination of the metric $g$ or the potential $q$ from $\Lambda^W_{g,q}$ was solved in general by Belishev-Kurylev \cite{Belishev-Kurylev} (see also \cite{Katchalov-Kurylev-Lassas}) but the stability estimates in the general setting appeared only recently in the work of Burago-Ivanov-Lassas-Lu \cite{Burago-Ivanov-Lassas-Lu} and are of $\log\log$ type (i.e. $F(x)=|\log |\log x||^{-\beta}$) for the case with no potential. 
When $g=g_{\rm eucl}$ is the Euclidean metric on a domain $M\subset \R^n$, a H\"older type
stability was proved by Sun \cite{Sun} and Alessandrini-Sun-Sylvester \cite{Alessandrini-Sun-Sylvester} for the determination of the potential $q$ from $\Lambda^W_{g_{\rm eucl},q}$.
In the case of non-Euclidean metrics, but close to the Euclidean metric on a ball in $\R^n$, Stefanov-Uhlmann \cite{Stefanov-Uhlmann98} obtained H\"older estimates for the metric recovery (with no potential involved), and they extended this result in \cite{Stefanov_Ulhmann05} to Riemannian metrics close to a \emph{simple} metric $g_0$ with injective X-ray transform on symmetric $2$-tensors. Such simple metrics are dense among simple metrics. We recall that simple metrics are Riemannian metrics with no conjugate points on a ball $B$ in $\R^n$ with strictly convex boundary, in particular all geodesics in $B$ for such a metric have finite length with endpoints on the boundary $\pl B$. If $g_0$ is a fixed simple metric, Bellassoued and Dos Santos Ferreira \cite{Bellassoued10,Bellassoued11} proved H\"older stability of the inverse problem for both $\|\Lambda^S_{g_0,q_1}-\Lambda^S_{g_0,q_2}\|_*$ and $\|\Lambda^W_{g_0,q_1}-\Lambda^W_{g_0,q_2}\|_*$. 
When $g$ is close to a fixed simple metric $g_0$ with injective X-ray transform on $2$-tensors, Montalto \cite{Montalto} extended the previous result to the recovery of the pair $(g,q)$ (and a magnetic potential term in addition) in a H\"older stable way. For non-simple metrics, we are aware of only two results showing strong stability: the first by Bao-Zhang \cite{Bao-Zhang} who prove for a non-trapping metric $g=c(x)^2g_{\rm eucl}$, conformal to the Euclidean metric, and satisfying certain assumptions on their conjugate points, that if $\|\Lambda^W_{c^2g_{\rm eucl}}-\Lambda^W_{\tilde{c}^2g_{\rm eucl}}\|_{*}$ is small enough then the conformal factors agree $c=\tilde{c}$; the second by Stefanov-Uhlmann-Vasy \cite{Stefanov-Uhlmann-Vasy} is of the same kind but under the assumption that $g_{\rm eucl}$ is replaced by a metric $g_0$ so that the manifold $(M,g_0)$ can be foliated by strictly convex hypersurfaces. In all these results, the time interval $I=(0,T)$ can be taken with $T>0$ finite but large enough for the wave case (depending on the diameter of the domain), while for the Schr\"odinger case it can be taken finite and small using infinite speed of propagation.\\

All these mentionned results where H\"older stability results hold assume no trapped geodesic rays for the Riemannian manifold $(M,g)$, i.e. geodesics staying inside the interior $M^\circ$ of $M$ for infinite time. Existence of trapped geodesics means that some regions of the phase space are not accessible from the boundary by geodesic rays, and some waves can possibly stay (microlocally) trapped for a long time near these trapped rays, so that a part of the information can not be read off microlocally from the DN map at the boundary.
It is thus an open question to understand how stable is the recovery of the coefficients of the wave equation or the Schr\"odinger equation when the metric is not simple. The difficulty to obtain such H\"older estimates lies in the fact that one usually reduces the inverse problem for the DN map to some X-ray tomography problem using wave packets or WKB solutions of the wave/Schr\"odinger equations that concentrate near single geodesics going from a point of the boundary to another point. It is likely that under general assumptions, no H\"older stability estimates hold but log stability estimates do; we mention the recent work of Koch-R\"uland-Salo \cite{Koch-Ruland-Salo} about this question.
Our purpose in this work is to address this stability question in a family of cases where the trapped set is sufficiently filamentary, the typical example being that of a non-simply  connected Riemannian metric with negative curvature and strictly convex boundary.

Our main geometric assumptions are the hyperbolicity of the trapped set for the geodesic flow and the absence of conjugate points. We notice that these two assumptions are satisfied if $(M,g)$ is negatively curved. 
Let us recall the precise definition of hyperbolic trapped set.  Let $\varphi_t : SM \to SM$ be the geodesic flow for $t \in \R$, where $SM = \{ (x,v) \in TM \, : \, \vert v \vert_{g(x)} = 1 \}$ is the unit tangent bundle. We call, for every $z = (x,v) \in SM$, the escape time of $SM$ in positive ($+$) and negative ($-$) times,
\[\begin{split}
\tau_+(z) & := \sup \{ t \geq 0 \, | \, \forall s<t, \varphi_s(z) \in SM^\circ \} \in [0, + \infty], \\
\tau_-(z) & := \inf \{ t \leq 0 \, | \, \forall s>t, \varphi_s(z) \in SM^\circ \} \in [-\infty, 0].
\end{split}\]
In other words, $\pm\tau_\pm(z)$ is the time needed for the geodesic $(\varphi_{\pm t}(z))|_{t\geq 0}$ to reach $\pl_\pm SM\cup \pl_0SM$.
The incoming ($-$) and outgoing ($+$) tails in $SM$ are defined by
\[
\Gamma_{\mp} = \{ z \in SM \, | \, \tau_{\pm}(z) = \pm \infty \},
\]
and the trapped set for the flow on $SM$ is the set $K := \Gamma_+ \cap \Gamma_-$. If $\pl M$ is strictly convex for $(M,g)$ (i.e. the second fundamental form of $\pl M$ is positive), the trapped set $K$ is a compact flow-invariant subset of the interior $SM^\circ$ of $SM$. 
We say that the trapped set $K \subset SM$ is a \emph{hyperbolic set} if there exists $C > 0$ and $\nu > 0$ so that, there is a continuous flow-invariant splitting over $K$
\begin{equation}
T_K(SM) = \R X \oplus E_u \oplus E_s,
\end{equation} 
where $X$ is the geodesic vector field on $SM$, and $E_s$, $E_u$ are vector subspaces satisfying for all $z\in K$
\begin{align}
\Vert  {\rm d}\varphi_t(z) w \Vert & \leq C e^{-\nu t} \Vert w \Vert, \quad \forall t > 0, \quad \forall w \in E_s(z), \\
\Vert  {\rm d} \varphi_t(z) w \Vert & \leq C e^{- \nu \vert t \vert} \Vert w \Vert, \quad \forall t < 0, \quad \forall w \in E_u(z),
\end{align}
with respect to any fixed metric on $SM$. The notion of conjugate points can be defined as follows. If $\pi_0 :SM\to M$ is the projection and $\mc{V}:=\ker d\pi_0\subset T(SM)$ is the vertical bundle of the fibration, we say that there is no conjugate point if ${\rm d}\varphi_t(\mc{V})\cap \mc{V}=\{0\}$ for all $t\not=0$, where $\{0\}$ denotes the $0$-section of $T(SM)$.

\subsection{The case of the Schrödinger equation}

The DN map associated with \eqref{e:IVP} is continuous \cite[Thm. 1]{Bellassoued10} as an operator from $H^1((0,T) \times \partial M)$ to  $L^2((0,T) \times \partial M)$. Our first goal is to a  obtain a H\"older stability estimate of the form
\begin{equation}
\label{e:stability_estimate}
\Vert q_1 - q_2 \Vert_{L^2(M)} \leq C \Vert \Lambda_{g,q_1} - \Lambda_{g,q_2} \Vert_{*}^\beta,
\end{equation}
for some $\beta > 0$ for the Schr\"odinger equation on a bounded time interval $(0,T)$. 
Here we assume that $q_1$ and $q_2$ belong to the family of admissible electrical potentials
\begin{equation}
\label{e:class_of_potentials}
 \mathcal{Q}(N_0) := \{ q \in W^{1,\infty}(M) \, | \, \Vert q \Vert_{W^{1,\infty}(M)} \leq N_0 \},
\end{equation}
with $N_0 > 0$ fixed, and that $q_1$ and $q_2$ coincide on the boundary $\partial M$. It is known that the estimate \eqref{e:stability_estimate} holds on simple manifolds \cite{Bellassoued10} with $\beta = 1/8$. Our aim is to extend this result to the case of hyperbolic trapped set of the geodesic flow and no conjugate points.

Our first result gives the stable determination of the potential $q$ from the DN map.

\begin{teor}
\label{t:potential_recovery_Schroedinger}
Let $(M,g)$ be a compact Riemannian manifold of dimension $d \geq 2$ with strictly convex boundary. Let $T, N_0 > 0$ fixed. Assume that the trapped set $K$ is hyperbolic and there are no conjugate points. Then, there exists a constant $C = C(M,g,T,N_0) > 0$ such that, for any $q_1, q_2 \in \mathcal{Q}(N_0)$ with $q_1 = q_2$ on $\partial M$,
\begin{equation}
\Vert q_1 - q_2 \Vert_{L^2(M)} \leq C \Vert \Lambda^S_{g,q_1} - \Lambda^S_{g,q_2} \Vert_{*}^{\beta},
\end{equation}
for some $\beta > 0$ depending only on $(M,g)$.
\end{teor}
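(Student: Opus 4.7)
The plan is to adapt the WKB/geometric-optics scheme of Bellassoued--Dos Santos Ferreira to the trapping setting, replacing the (unavailable) global X-ray transform inversion on simple manifolds by the known $L^2$ X-ray stability on manifolds with hyperbolic trapped set and no conjugate points (in the spirit of Guillarmou and collaborators). The starting point is the Alessandrini-type integral identity: if $u_1$ solves \eqref{e:IVP} with potential $q_1$ and boundary data $f_1$, and $u_2$ solves the backward/conjugate Schr\"odinger equation with potential $q_2$ and data $f_2$, then, because $q_1=q_2$ on $\pl M$, one has
\[ \int_0^T\!\!\int_M (q_1-q_2)\,u_1\overline{u_2}\dif v_g\dif t \;=\; \langle (\Lambda^S_{g,q_1}-\Lambda^S_{g,q_2})f_1,\,f_2\rangle_{L^2((0,T)\times \pl M)}.\]
Hence a careful choice of $(f_1,f_2)$ will control integrals of $q=q_1-q_2$ by $\|\Lambda^S_{g,q_1}-\Lambda^S_{g,q_2}\|_*$ times Sobolev norms of $f_1,f_2$.

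The key step is the construction, for each oriented geodesic segment $\gamma:[0,\ell]\to M$ with endpoints on $\pl M$, of semiclassical WKB solutions $u_j^h(t,x)=e^{i\phi(t,x)/h}a_j(t,x;h)+r_j^h$ with $h\to 0$, concentrating microlocally on $\gamma$. The phase $\phi$ is obtained by solving the Hamilton--Jacobi equation $\pl_t\phi+|\nabla\phi|_g^2=0$ in a tube around $\gamma$; this is solvable globally in the tube precisely because $(M,g)$ has no conjugate points, so the Jacobi fields along $\gamma$ generate a Lagrangian foliation. The amplitudes $a_j$ satisfy transport equations along $\gamma$ and can be chosen so that $\|r_j^h\|$ and the boundary traces of $r_j^h$ are $O(h^N)$ for any $N$, with $\|f_j\|_{H^1}\leq Ch^{-1}$ and $\|u_j^h\|_{L^2}\leq C$. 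Substituting into the identity, the leading term as $h\to 0$ is $h$-independent and equals $|a(\gamma)|^2 I_0 q(\gamma)$ up to an admissible rescaling, where $I_0 q(\gamma):=\int_0^\ell q(\gamma(s))\dif s$ is the X-ray transform of $q$ along $\gamma$; the remainder is $O(h)+O(h^{-k}\|\Lambda^S_{g,q_1}-\Lambda^S_{g,q_2}\|_*)$ for some fixed $k$ depending only on $(M,g,T)$.

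Varying the geodesic $\gamma$ in the set of boundary-to-boundary geodesics, this yields an estimate of the form $|I_0 q(\gamma)|\leq C(h+h^{-k}\|\Lambda^S_{g,q_1}-\Lambda^S_{g,q_2}\|_*)$ uniformly in $\gamma$. One then invokes the stability estimate for the X-ray transform on manifolds with hyperbolic trapped set and no conjugate points, which, under our assumptions and using that $q$ is bounded in $W^{1,\infty}$ and vanishes on $\pl M$, gives an inequality of the shape $\|q\|_{L^2(M)}\leq C\|I_0 q\|_{H^s}^{\alpha}$ for some $\alpha,s>0$. Combining this with the pointwise bound on $I_0 q$ and optimizing in $h>0$ produces a H\"older bound $\|q\|_{L^2}\leq C\|\Lambda^S_{g,q_1}-\Lambda^S_{g,q_2}\|_*^\beta$.

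The principal difficulty is not the inverse problem reduction but the interplay between the WKB construction and the geometry. For geodesics passing very close to the trapped set $K$, the escape time $\tau_\pm$ blows up, so the Lagrangian tube in which $\phi$ and $a$ are constructed becomes long and thin, and the transport equation accumulates large derivative losses; the bound on $\|u_j^h\|$ and the error $r_j^h$ must be quantified in terms of $\tau_\pm$. Hyperbolicity of $K$ gives exponential estimates on the divergence of neighboring geodesics and on $\tau_\pm$ away from $K$ (Anosov-type dilation bounds), which is exactly what is needed to close the WKB estimates uniformly, and is also what underlies the X-ray stability input. Matching the WKB loss with the X-ray stability exponent to arrive at an effective $\beta>0$, and carefully tracking how the constants depend on $\|q_j\|_{W^{1,\infty}}$ through the transport equation and through the X-ray stability threshold, is the delicate bookkeeping that drives the argument.
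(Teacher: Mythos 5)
Your high-level architecture matches the paper's: construct WKB solutions, feed them into the Alessandrini-type identity, reduce to the X-ray transform of $q$, and invoke the $\Pi_0$-ellipticity/stability from the hyperbolic-trapping X-ray theory. But two of the technical ideas that actually make this work in the trapped setting are either missing or stated incorrectly, and they are the crux of the proof, not "delicate bookkeeping."

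First, the claim that hyperbolicity of $K$ "gives exponential estimates\ldots which is exactly what is needed to close the WKB estimates uniformly" is the opposite of what happens. Hyperbolicity means neighbouring geodesics \emph{diverge} exponentially, so the transport equation along a geodesic of length $\ell$ close to $K$ suffers an $e^{C\ell}$ loss in derivatives of the amplitude, and since $\ell$ is unbounded near $\Gamma_\pm$ no uniform-in-$\gamma$ WKB estimate exists. The paper resolves this by choosing the amplitude's boundary data $b$ supported on geodesics of length $\leq T_0$ (i.e.\ $b|_{\mathcal{T}_+^{\partial SM}(T_0)}=0$), accepting a controlled loss $e^{C_0T_0}$ in the WKB estimates, and then separately bounding the contribution of $\{\tau_+^e> T_0\}$ using the escape-rate decay $V(t)\lesssim e^{Qt}$ with $Q<0$ (Lemma~\ref{l:close_to_trapped}); the Hölder exponent comes from optimising $T_0$ against $\|\Lambda^S_{g,q_1}-\Lambda^S_{g,q_2}\|_*$. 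Your plan has no such truncation and no mechanism to absorb the long geodesics, so the step "varying $\gamma$ in the set of boundary-to-boundary geodesics\ldots yields an estimate uniformly in $\gamma$" cannot be carried out. Second, the phase $\phi$ cannot be constructed globally on a "tube around $\gamma$" when $\gamma$ self-intersects or recurs, which is unavoidable on a non-simply-connected manifold with trapping: $d_g(y,\cdot)$ is multi-valued. The paper lifts everything to the universal cover, uses $\psi_{\tilde y}=d_{\tilde g}(\tilde y,\cdot)$ there, sums over $\pi_1(M)$, and must then control the resulting off-diagonal cross-terms by non-stationary phase, at the cost of a further exponential $e^{C_0T_0}$ (tied to the volume entropy and the injectivity radius). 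Your proposal does not address either the multivaluedness of the phase or the cross-terms, and "absence of conjugate points" on $M$ alone does not give a single-valued Lagrangian tube along a long, self-intersecting geodesic.
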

We notice from our proof that the constant $\beta$ can be expressed in terms of the volume entropy and dynamical quantities on the geodesic flow of $(M,g)$, more precisely the pressure of the unstable jacobian of the geodesic flow on the trapped set and the maximal expansion rate of the flow.

In order to obtain a stability estimate for the conformal factor of the metric, we consider the family of admissible conformal factors given by
\begin{equation}
\label{e:class_of_conformal_factors}
\mathscr{C}(N_0,k,\epsilon) := \{ c \in \mathcal{C}^\infty(M) \,| \, c > 0 \text{ in } \overline{M}, \quad \Vert 1 - c \Vert_{\mathcal{C}^0(M)} \leq \epsilon, \quad \Vert c \Vert_{\mathcal{C}^k(M)} \leq N_0 \}.
\end{equation}
Our second result gives the stable determination of the conformal factor.

\begin{teor}
\label{t:conformal_Schroedinger}
Let $(M,g)$ be a compact Riemannian manifold of dimension $d \geq 2$ with strictly convex boundary,  hyperbolic trapped set $K$ and no conjugate points. Let $T, N_0 > 0$ be fixed.  Then, there exist $k \geq 1$ depending only on $\dim(M)$, $\epsilon>0$ depending on $(M,g,N_0)$ and a constant $C = C(M,g,T,N_0) > 0$ such that, for any $c \in \mathscr{C}(N_0,k,\epsilon)$ with $c = 1$ near $\partial M$,
\begin{equation}
\Vert 1 - c \Vert_{L^2(M)} \leq C \Vert \Lambda^S_{g,0} - \Lambda^S_{c g,0} \Vert_*^{\beta},
\end{equation}
for some $\beta > 0$ depending only on $(M,g)$.
\end{teor}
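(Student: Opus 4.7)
The plan is to reduce Theorem \ref{t:conformal_Schroedinger} to the potential-case Theorem \ref{t:potential_recovery_Schroedinger} via a conformal gauge transformation which converts the unknown conformal factor $c$ into an effective potential $q_c$ on the fixed metric $g$, and then extract $c$ from $q_c$ by elliptic regularity, exploiting smallness of $\|1-c\|_{C^0}$ and the a priori bound $\|c\|_{C^k}\leq N_0$.

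\textbf{Conformal substitution.} For $d\geq 3$, given $u$ solving $(i\pl_t-\Delta_{cg})u=0$ with boundary data $f$ and zero initial data, set $v:=c^{(d-2)/4}u$. Since $c\equiv 1$ near $\pl M$, $u$ and $v$ share boundary values and inward normal derivatives on $\pl M$. Using the identity $\Delta_{cg}=c^{-1}\Delta_g-\tfrac{d-2}{2}c^{-2}\langle dc,\cdot\rangle_g$, the cross first-order terms cancel with the choice of exponent $(d-2)/4$ and one finds
\[
(i\pl_t-\Delta_g-q_c)v=(1-c)\,i\pl_t v,\qquad q_c:=-\tfrac{d-2}{4}c^{-1}\Delta_g c-\tfrac{(d-2)(6-d)}{16}c^{-2}|dc|_g^2.
\]
The case $d=2$ is analogous, using $\Delta_{cg}=c^{-1}\Delta_g$. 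For $k$ large enough, $q_c$ belongs to an admissible class $\mathcal{Q}(N_0')$ of Theorem \ref{t:potential_recovery_Schroedinger}.

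\textbf{Comparison of DN maps and reduction via Theorem \ref{t:potential_recovery_Schroedinger}.} Let $\tilde v$ solve $(i\pl_t-\Delta_g-q_c)\tilde v=0$ with identical boundary and initial data. The difference $w=v-\tilde v$ solves the inhomogeneous Schr\"odinger equation on $(M,g)$ with source $(1-c)\,i\pl_t v$ and zero Dirichlet/initial data. Combining standard Schr\"odinger energy estimates with hidden boundary regularity and the a priori Sobolev control $\|v\|_{L^2_tH^2_x}\leq C\|f\|_{H^1}$ obtained from well-posedness on $(M,cg)$, and using that $u=v$ near $\pl M$, one gets
\[
\|\Lambda^S_{cg,0}-\Lambda^S_{g,q_c}\|_{*}\leq C\|1-c\|_{C^0}.
\]
The triangle inequality and Theorem \ref{t:potential_recovery_Schroedinger} applied to the pair $(q_c,0)$ then yield
\[
\|q_c\|_{L^2}\leq C\bigl(\|\Lambda^S_{cg,0}-\Lambda^S_{g,0}\|_{*}+C\|1-c\|_{C^0}\bigr)^{\beta}.
\]

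\textbf{Elliptic extraction and closing the loop.} Writing $c=1+\eta$ with $\eta=0$ near $\pl M$, one has $q_c=-\tfrac{d-2}{4}\Delta_g\eta+R(\eta,d\eta,d^2\eta)$ with $R$ quadratic in its arguments. For $\|\eta\|_{C^1}$ small, guaranteed by interpolating $\|\eta\|_{C^0}\leq\epsilon$ with $\|c\|_{C^k}\leq N_0$, elliptic regularity for the Dirichlet Laplacian on $H^2\cap H^1_0$ absorbs $R$ and yields $\|\eta\|_{H^2}\leq C\|q_c\|_{L^2}$. A further Gagliardo--Nirenberg interpolation between $\|\eta\|_{L^2}$ and $\|c\|_{C^k}\leq N_0$ gives $\|1-c\|_{C^0}\leq C\|\eta\|_{L^2}^{\gamma}$ with $\gamma=\gamma(d,k)\to 1$ as $k\to\infty$. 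Plugging these into the previous estimate produces an inequality of the form
\[
\|\eta\|_{L^2}\leq C\|\Lambda^S_{cg,0}-\Lambda^S_{g,0}\|_{*}^{\beta}+C\|\eta\|_{L^2}^{\gamma\beta},
\]
and for $k$ large enough and $\epsilon$ small enough (depending on $M,g,N_0$), the nonlinear term on the right is absorbed into the left-hand side, yielding the H\"older stability estimate with the same exponent $\beta$ as in Theorem \ref{t:potential_recovery_Schroedinger}.

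\textbf{Main obstacle.} The most delicate step is the perturbation estimate of the DN maps: the conformal substitution is only an approximate reduction to a fixed-metric potential problem, since the residual source $(1-c)\,i\pl_t v$ does not vanish. Controlling its contribution in the $H^1\to L^2$ operator norm requires sharp a priori regularity for $i\pl_t v$ compatible with the smallness of $\|1-c\|$, and a careful use of the fact that $c=1$ holds in a neighborhood of $\pl M$ to trace the difference back to the boundary. A secondary subtlety is the joint calibration of the parameters in $\mathscr{C}(N_0,k,\epsilon)$: $k$ must be large enough that (i) $q_c$ belongs to an admissible class of Theorem \ref{t:potential_recovery_Schroedinger} and (ii) the interpolation in the closing step yields a super-linear relation between $\|\eta\|_{L^2}$ and $\|1-c\|_{C^0}$ compatible with absorption, while $\epsilon$ is taken small so that the nonlinear map $c\mapsto q_c$ linearizes around $c=1$ and the quadratic elliptic remainder can be absorbed.
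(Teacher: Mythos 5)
The closing absorption step does not work, and this is a structural problem with reducing the conformal problem to Theorem \ref{t:potential_recovery_Schroedinger}. You arrive at an inequality of the form
\[
\|\eta\|_{L^2}\;\leq\; C\,\|\Lambda^S_{cg,0}-\Lambda^S_{g,0}\|_{*}^{\beta}\;+\;C\,\|\eta\|_{L^2}^{\gamma\beta},
\]
with $\gamma<1$ (from interpolating $C^0$ between $L^2$ and $C^k$) and $\beta<1$ (the H\"older exponent of Theorem \ref{t:potential_recovery_Schroedinger}). Since $\gamma\beta<1$, the term $\|\eta\|_{L^2}^{\gamma\beta}$ is \emph{larger}, not smaller, than $\|\eta\|_{L^2}$ in the regime $\|\eta\|_{L^2}\to 0$: the inequality $x\leq A+Cx^{\gamma\beta}$ with $\gamma\beta<1$ is satisfied trivially for all small $x$ (it amounts to $x^{1-\gamma\beta}\leq C$) and gives no bound of $x$ in terms of $A$. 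Making $k$ large pushes $\gamma\to 1^-$ but never past $1$, and $\beta$ is fixed by Theorem \ref{t:potential_recovery_Schroedinger}; making $\epsilon$ small worsens matters, since $\|\eta\|_{L^2}^{\gamma\beta}=\|\eta\|_{L^2}\cdot\|\eta\|_{L^2}^{\gamma\beta-1}$ and the factor $\|\eta\|_{L^2}^{\gamma\beta-1}\to\infty$. The issue is inherent to the route you chose: the residual source $(1-c)\,i\partial_t v$ produces an additive error $\|1-c\|_{C^0}$ \emph{inside} the DN-map norm, and after passing it through the sublinear map $s\mapsto s^{\beta}$ the error becomes $\|1-c\|_{C^0}^{\beta}$, a power strictly smaller than one, which no interpolation can upgrade to a superlinear power of $\|\eta\|_{L^2}$. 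Compare with the paper's own closing step: there the working inequality is $\|\rho_0\|_{L^2}^2\leq Ce^{C_0T_0}\epsilon^{\ell}\|\rho_0\|_{L^2}^{2+\delta}+Ce^{C_0T_0}\|\Lambda^S_g-\Lambda^S_{cg}\|_*^{1/2}+Ce^{QT_0/4}\|\rho_0\|_{L^2}^{2-2\delta'}$, where the ``bad'' term carries the exponent $2+\delta>2$, i.e. it is genuinely higher order, which is achieved by choosing $\lambda$ as a negative power of $\|\rho_0\|_{C^2}$ in the bespoke geometric-optics Lemma \ref{l:lemma_6.4} rather than by feeding the error through an already H\"older-stable theorem. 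The paper never applies a power $\beta<1$ to an additive error.

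A secondary gap is the DN-comparison estimate $\|\Lambda^S_{cg,0}-\Lambda^S_{g,q_c}\|_{H^1\to L^2}\leq C\|1-c\|_{C^0}$. To bound the normal derivative of $w=v-\tilde v$ by a constant times the source $(1-c)\,i\partial_t v$ in a norm compatible with the energy estimate (Lemma \ref{l:solution_homogeneous}), you need $\partial_t v\in L^1([0,T];L^2(M))$, equivalently $v\in L^1([0,T];H^2(M))$, with norm controlled by $\|f\|_{H^1((0,T)\times\partial M)}$. This level of interior regularity does not follow from the hidden regularity behind \cite[Thm.~1]{Bellassoued10} (which only gives $\partial_n u\in L^2$), and in general for Schr\"odinger with Dirichlet data in $H^1$ one expects spatial regularity closer to $H^{3/2}$ rather than $H^2$. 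So the estimate you invoke would need a separate argument, most likely a duality argument that lands in negative Sobolev spaces and loses a power of $\lambda$ when applied to geometric optics. Together these two points indicate that the conformal-gauge reduction to Theorem \ref{t:potential_recovery_Schroedinger} cannot be made to close in the form you wrote it; the paper instead builds geometric optics adapted simultaneously to $g$ and $cg$ (two eikonal phases $\psi_1,\psi_2$) and bounds the X-ray transform of $\rho=c^{d/2-1}(1-c)$ directly.
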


As far as we know, these two results are the first H\"older stability results for the Schr\"odinger equation when the principal symbol of the operator has trapped bicharacteristic rays.

\subsection{The case of the wave equation}

The DN map associated with \eqref{e:IVP2} with $I = (0,T)$ is bounded as an operator from $ H^1_0((0,T) \times \partial M)$ to  $L^2((0,T) \times \partial M)$ (see \cite{Lions72-I, Lions72-II}). In the case $I=(0,\infty)$, it is necessary to introduce a exponential weight in the time as $T \to +\infty$ to obtain boundedness of the DN map. For our result, due to the fact that some geodesics have infinite length (those that are trapped), we need to consider the wave equation for all positive time.

For $k,\ell \in \mathbb{N}_0$, let $\nu > 0$. We define the weighted Sobolev space $e^{\nu t}H^{k}(I ; H^\ell(M))$ as the space of functions $f \in H^{k}(I ; H^\ell(M))$, with finite norm
\begin{align*}
\Vert f \Vert_{e^{\nu t}H^{k}(I ; H^\ell(M))} :=  \sum_{j = 0}^k \left( \int_0^\infty  e^{-2\nu t}  \Vert \pl_t^j f(t, \cdot ) \Vert^2_{H^\ell(M)} {\rm d}t \right)^{\frac{1}{2}}.
\end{align*}
In particular, we denote $e^{\nu t} H^k(I \times M) := e^{\nu t} H^k(I; H^k(M))$.  
Similarly we define the weighted Sobolev spaces  $e^{\nu t} H^{k}(I; H^\ell(\partial M))$ on the boundary $\partial M$, and denote $e^{\nu t} H^k(I \times \partial M) := e^{\nu t} H^k(I ; H^k(\partial M))$.

 The DN map associated with \eqref{e:IVP2} is continuous from $e^{\nu t}H^1_0(I \times \partial M )$ to $e^{\nu t} L^2(I \times  \partial M)$ for every $\nu\geq \nu_0$: this follows from
\cite[Thm 6.10 and Thm. 7.1]{Chazarain82} and can be checked that $\nu_0\geq 0$ depends only on $\|q\|_{L^\infty}$, as we show in Lemma \ref{l:inhomogeneous_wave_lemma} and the comment that follows. We denote:
\begin{equation}
\Vert \Lambda_{g,q}^W \Vert_{*,\nu} := \Vert \Lambda_{g,q}^W \Vert_{\mathcal{L}(e^{\nu t}H_0^1(I \times \partial M );e^{\nu t} L^2(I \times  \partial M))}. 
\end{equation}

We next state our main result on the stable determination of the electric potential from the DN map.

\begin{teor}
\label{t:potential_recovery_wave}
Let $(M,g)$ be a compact Riemannian manifold of dimension $n \geq 2$ with strictly convex boundary, hyperbolic trapped set and no conjugate points. Let $N_0 > 0$ be fixed. There is $\nu_0$ depending only on $N_0$ such that  for every $\nu > \nu_0$, there exists  $C> 0$ such that, for any $q_1, q_2 \in \mathcal{Q}(N_0)$ with $q_1 = q_2 $ on $\partial M$,
\begin{equation}
\Vert q_1 - q_2 \Vert_{L^2(M)} \leq C \Vert \Lambda_{g,q_1}^W - \Lambda_{g,q_2}^W \Vert_{*,\nu}^{\beta},
\end{equation}
for some $\beta > 0$ depending only on $(M,g)$ and $\nu$.
\end{teor}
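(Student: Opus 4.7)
The plan is to reduce the wave problem to a family of Helmholtz problems via Laplace transform in time, and then reuse the geometric-optics / X-ray transform machinery already developed for the Schrödinger case (Theorem \ref{t:potential_recovery_Schroedinger}), combined with a resolvent estimate in a strip that is available under the hyperbolicity assumption on the trapped set.

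For $\nu>\nu_0$ to be chosen large enough, write $\hat u(\tau,\cdot):=\int_0^\infty e^{-(i\tau+\nu)t}u(t,\cdot)\,dt$; then the solutions of \eqref{e:IVP2} with potential $q_j$ correspond to a family of Helmholtz solutions
\[
(-\Delta_g + q_j - \lambda^2)\hat u_j^{(\tau)} = 0 \text{ in } M, \qquad \hat u_j^{(\tau)}|_{\pl M} = \hat f^{(\tau)}, \qquad \lambda=\tau-i\nu,
\]
and $\Lambda^W_{g,q_j}$ transforms into a family of Helmholtz DN maps $\Lambda^H_{g,q_j}(\lambda)$. By Plancherel, the weighted wave-DN norm $\Vert\Lambda^W_{g,q_1}-\Lambda^W_{g,q_2}\Vert_{*,\nu}$ controls an $L^2_\tau$ norm of $\Vert\Lambda^H_{g,q_1}(\tau-i\nu)-\Lambda^H_{g,q_2}(\tau-i\nu)\Vert_{H^1\to L^2}$. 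For each fixed large $\tau$ I insert into the standard Green identity a pair of geometric-optics solutions $u_j^{(\lambda)} = a_j e^{i\lambda\varphi_j}+r_j^{(\lambda)}$ concentrating near a single non-trapped geodesic $\gamma_z$, $z\in\pl_-SM\setminus\Gamma_-$, where $\varphi_j$ solves the eikonal equation $|d\varphi_j|_g=1$ in a tube around $\gamma_z$ (globally solvable thanks to the no-conjugate-points assumption) and $a_j$ solves the transport equation along $\gamma_z$. The remainder $r_j^{(\lambda)}$ is controlled via the cutoff resolvent $\chi(-\Delta_g+q_j-\lambda^2)^{-1}\chi$ for $\chi$ supported away from $\pl M$; hyperbolicity of $K$ yields a polynomial bound $\Vert\chi(-\Delta_g+q_j-\lambda^2)^{-1}\chi\Vert_{L^2\to L^2}\lesssim |\lambda|^{N-1}$ on the strip $\{\Im\lambda\geq -\nu\}$ provided $\nu>\nu_0$ exceeds the topological pressure of one-half of the unstable Jacobian (a Nonnenmacher--Zworski type estimate, either established or cited by the paper).

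After cancellations the leading-order contribution of the Green identity is the geodesic X-ray transform $I_0(q_1-q_2)(z)$, modulated by a smooth non-vanishing weight, while the error consists of a term of size $|\lambda|^{-1/2}$ from the WKB remainder and a term of size $|\lambda|^{M}\Vert\Lambda^H_{g,q_1}(\lambda)-\Lambda^H_{g,q_2}(\lambda)\Vert_{H^1\to L^2}$ from the boundary pairing. Squaring, integrating in $\tau$ over $|\tau|\leq R$ and in $z\in\pl_-SM$, and polynomially optimizing $R$ in terms of $\Vert\Lambda^W_{g,q_1}-\Lambda^W_{g,q_2}\Vert_{*,\nu}$, produces a Hölder estimate
\[
\Vert I_0(q_1-q_2)\Vert_{L^2(\pl_-SM)}\leq C\Vert\Lambda^W_{g,q_1}-\Lambda^W_{g,q_2}\Vert_{*,\nu}^{\beta_1}.
\]
The conclusion then follows from the Hölder stability of the geodesic X-ray transform on manifolds with strictly convex boundary, hyperbolic trapped set and no conjugate points — the same ingredient used in the proof of Theorem \ref{t:potential_recovery_Schroedinger} — applied to $f:=q_1-q_2\in H^1_0(M)$, which is a priori bounded in $H^1$ by $2N_0$ thanks to the definition of $\mathcal{Q}(N_0)$ and the matching of $q_1,q_2$ on $\pl M$.

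The main obstacle is the construction and remainder control of the geometric-optics solutions uniformly in $\lambda$ throughout the strip $\{\Im\lambda\geq -\nu\}$: individual non-trapped geodesics may pass arbitrarily close to the trapped set $K$, so the tube supporting the amplitude must be chosen to shrink polynomially in $|\lambda|$ to avoid interference with the quasimode phenomena caused by $K$, while the eikonal equation must remain globally solvable on this tube. Hyperbolicity of $K$ is precisely what keeps the cutoff resolvent polynomially bounded in such a strip, and the no-conjugate-points hypothesis is what guarantees the existence of $\varphi_j$ along every non-trapped $\gamma_z$. Matching these two ingredients — one microlocal, one geometric — so that the final loss in $|\lambda|$ is absorbed by the Hölder optimization is the delicate step; the dependence $\nu_0=\nu_0(N_0)$ in the statement reflects exactly the requirement that the Nonnenmacher--Zworski strip be wide enough to accommodate the perturbation by both potentials.
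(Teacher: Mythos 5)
Your proposal departs from the paper's strategy in a way that introduces several genuine gaps. The paper does not take a Laplace transform to reduce the wave equation to a family of Helmholtz problems; it works directly in the time domain, constructing WKB solutions of the wave equation of the form $\sum_{\gamma\in\pi_1(M)} a(t,\gamma(x))\, e^{i\lambda(\psi_{\tilde y}(\gamma(x))-t)}$ on the universal cover $\widetilde M$ and periodizing over the deck group. This passage to $\widetilde M$ is a step you omit and which cannot be bypassed: when geodesics self-intersect, an eikonal phase cannot be globally defined ``in a tube around $\gamma_z$'' inside $M$, and the no-conjugate-points hypothesis gives global solvability only on $\widetilde M$. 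Moreover, the frequency $\lambda$ in the paper is a WKB parameter independent of any Laplace dual variable; the relevant large parameter to optimize is the geodesic length cutoff $T$, and the constant $\nu_0$ comes from elementary energy estimates for the damped wave equation (Lemma \ref{l:inhomogeneous_wave_lemma}) depending only on $\|q\|_{L^\infty}$, not from any pressure condition on the trapped set.

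Your appeal to a Nonnenmacher--Zworski resolvent estimate is also misplaced on two counts. First, after Laplace transform the Helmholtz parameter $\lambda$ stays uniformly off the real axis, and the Dirichlet resolvent $(-\Delta_g+q-\lambda^2)^{-1}$ on the compact manifold $M$ is then bounded by elementary spectral theory (distance to the discrete Dirichlet spectrum), with no geometric hypothesis needed; there are no scattering resonances here. Second, the pressure condition $\mathrm{Pr}(-J_u/2)<0$ required by the Nonnenmacher--Zworski type estimate is strictly stronger than hyperbolicity of the trapped set, which is all the theorem assumes, so invoking it would prove a weaker result. Most seriously, the sketch never confronts the actual difficulty caused by trapping: the WKB solutions cover only geodesics of finite length, the constants in the remainder estimates grow exponentially as $\tau_+(z)\to\infty$ (Lemma \ref{l:estimate_on_b}), and the contribution of geodesics near $\Gamma_-$ must be absorbed using the exponential volume decay $V(T)\lesssim e^{QT}$ (Lemma \ref{l:close_to_trapped}) together with an optimization in $T$. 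That balance is where the Hölder exponent $\beta$ actually comes from --- not from a ``Hölder stability of $I_0$'' invoked as a black box. What the paper uses from \cite{Guillarmou17} is the two-sided Lipschitz elliptic estimate $\|f\|_{H^k}\asymp\|\Pi_0^e f\|_{H^{k+1}}$ for the normal operator, fed through the specific test function $b=\chi_T(\tau_+^e)\,I_0^e(\Pi_0^e q)$; your proposal leaves all of this machinery unstated and hence unproved.
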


We finally state our main result on the stable recovery of the conformal factor.
  
\begin{teor}
\label{t:conformal_wave}
Let $(M,g)$ be a compact Riemannian manifold of dimension $d \geq 2$ with strictly convex boundary,  hyperbolic trapped set  and no conjugate points. Let $N_0 > 0$ be fixed.  Then, there exist $\nu_0 > 0$, $k \geq 1$ depending only on $d$, and $\epsilon>0$ depending on $(M,g,N_0)$, 
such that for all $\nu>\nu_0$, there is $C$ depending on $(M,g,N_0,\nu)$ so that, for any $c \in \mathscr{C}(N_0,k,\epsilon)$ with $c = 1$ near $\partial M$,
\begin{equation}
\Vert 1 - c \Vert_{L^2(M)} \leq C \Vert \Lambda^W_{g,0} - \Lambda^W_{c g,0} \Vert_{*,\nu}^{\beta},
\end{equation}
for some $\beta > 0$ depending only on $(M,g)$ and $\nu$.
\end{teor}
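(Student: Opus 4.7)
The strategy is to reduce Theorem \ref{t:conformal_wave} to the potential recovery result of Theorem \ref{t:potential_recovery_wave} via a conformal gauge transformation. For $c \in \mathscr{C}(N_0,k,\epsilon)$ with $c = 1$ near $\partial M$, I would exploit the conformal Laplacian identity $L_{c^{4/(d-2)}g}u = c^{-(d+2)/(d-2)} L_g(cu)$ (with $L_g = \Delta_g + \tfrac{d-2}{4(d-1)} R_g$) in dimension $d \geq 3$, and the simpler relation $\Delta_{c^2 g} = c^{-2}\Delta_g$ when $d = 2$. After the gauge change $w := c^{(d-2)/4} u$, the wave equation $(\partial_t^2 + \Delta_{cg}) u = 0$ can be rewritten as
\[
(\partial_t^2 + \Delta_g + q_c)\, w \,=\, -\bigl(c^{-4/(d-2)} - 1\bigr)\Delta_g w,
\]
where $q_c = q_c(c, \nabla c, \Delta_g c, R_g)$ is a zeroth-order expression elliptic in $c$, vanishing when $c \equiv 1$ and satisfying $\|q_c\|_{W^{1,\infty}(M)} \leq C(N_0)$ for $k$ large enough. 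Since $c = 1$ near $\partial M$, the gauge is trivial there and the metrics $g$ and $cg$ agree in a neighborhood of $\partial M$, so the DN maps compare without boundary corrections and $q_c$ vanishes on $\partial M$.

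The next step is to show that $\|\Lambda^W_{g, q_c} - \Lambda^W_{g,0}\|_{*,\nu} \leq C \|\Lambda^W_{cg,0} - \Lambda^W_{g,0}\|_{*,\nu}$. The top-order perturbation $(c^{-4/(d-2)} - 1)\Delta_g w$ is of the same order as the principal symbol of the wave operator, but of size $O(\epsilon)$ in uniform norm by the $\mathcal{C}^0$-smallness assumption on $1-c$. Using Duhamel's formula in the exponentially weighted spaces $e^{\nu t}H^k$ together with the well-posedness estimates already invoked in the paper (uniformly in $\nu > \nu_0$ for $\nu_0$ depending only on $N_0$), this perturbation can be absorbed for $\epsilon$ sufficiently small. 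Applying Theorem \ref{t:potential_recovery_wave} to $q_c$, which belongs to $\mathcal{Q}(N_0')$ for some $N_0' = N_0'(N_0)$, then yields
\[
\|q_c\|_{L^2(M)} \,\leq\, C\, \|\Lambda^W_{g, 0} - \Lambda^W_{cg, 0}\|_{*,\nu}^{\beta_0}
\]
for some $\beta_0 > 0$ depending on $(M,g)$ and $\nu$.

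The final step is to convert the $L^2$ bound on $q_c$ into an $L^2$ bound on $1 - c$ via elliptic regularity. Up to perturbative contributions controlled by $\epsilon$ and the a priori $\mathcal{C}^k$ bound on $c$, the principal part of $q_c$ is of the form $\mu\, \Delta_g \phi(c)$ for an explicit smooth function $\phi$ with $\phi'(1) \neq 0$ (for instance $\phi(c) = \log c$ up to a normalizing constant). Setting $\tilde w := \phi(c) - \phi(1)$, which vanishes in a neighborhood of $\partial M$, we obtain a semilinear elliptic equation $\Delta_g \tilde w = A(c)\, q_c + B(c, \nabla \tilde w)$ with $A(1) \neq 0$ and $B$ quadratic in $\nabla \tilde w$. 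For $\epsilon$ small, $B$ is a perturbation of the linear operator, and the isomorphism $\Delta_g : H^2(M)\cap H^1_0(M) \to L^2(M)$ together with a Gagliardo--Nirenberg-type interpolation against the uniform $\mathcal{C}^k$ bound gives $\|\tilde w\|_{H^2(M)} \leq C \|q_c\|_{L^2(M)}$. A Taylor expansion of $\phi^{-1}$ around $\phi(1)$ then yields $\|1 - c\|_{L^2(M)} \leq C \|q_c\|_{L^2(M)}$, concluding the proof with Hölder exponent $\beta = \beta_0$ (possibly reduced by a fixed factor from the interpolation).

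The main obstacle I anticipate is the top-order perturbation $(c^{-4/(d-2)} - 1)\Delta_g w$ arising in the gauge reduction: it is not a lower-order potential term, so it cannot be directly incorporated into Theorem \ref{t:potential_recovery_wave}, and must instead be absorbed perturbatively using the smallness parameter $\epsilon$ together with the exponentially weighted resolvent estimates for $\partial_t^2 + \Delta_g$. A secondary difficulty is to verify that the weight threshold $\nu_0$ from Theorem \ref{t:potential_recovery_wave} can be chosen uniformly over $c \in \mathscr{C}(N_0, k, \epsilon)$; this follows from the uniform $L^\infty$ bound on $q_c$ and from the fact that $\nu_0$ depends only on $\|q_c\|_{L^\infty}$ as stated in Lemma \ref{l:inhomogeneous_wave_lemma}.
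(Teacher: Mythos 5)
Your proposed reduction to Theorem \ref{t:potential_recovery_wave} via a conformal gauge change cannot work, and the obstacle you flag at the end is not a technical nuisance but a genuine obstruction. After the gauge change $w = c^{(d-2)/4}u$ (with $cg = e^{2\varphi}g$), the exact transformed equation is $\partial_t^2 w + c^{-1}\Delta_g w + q_c w = 0$, so the residual term on the right-hand side of your rewritten equation is $(c^{-1}-1)\Delta_g w$, a \emph{second-order} perturbation that changes the principal symbol $-\tau^2 + |\xi|_g^2$ into $-\tau^2 + c^{-1}|\xi|_g^2$. For the wave equation this is a change of propagation speed: the characteristic set, the light cone, and the singular support of the solution operator are all governed by the $cg$-geodesics, not the $g$-geodesics. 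The comparison of DN maps you implicitly need,
\begin{equation*}
\Vert \Lambda^W_{g,q_c} - \Lambda^W_{g,0}\Vert_{*,\nu} \;\lesssim\; \Vert \Lambda^W_{cg,0} - \Lambda^W_{g,0}\Vert_{*,\nu},
\end{equation*}
is not obtainable by a Duhamel/absorption argument in the weighted spaces: treating $(c^{-1}-1)\Delta_g w$ as a source in $e^{\nu t}H^k$ loses two derivatives per iteration, and more fundamentally the two DN maps differ at \emph{principal} order when tested against high-frequency geometric optics data of frequency $\lambda$ (the error is $O(\epsilon\lambda^2)$, not $O(\epsilon)$), which is precisely the regime the stability proof needs. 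The $\mathcal{C}^0$-smallness of $1-c$ does not make the wave fronts of the two equations close.

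By contrast, the paper does not pass through Theorem \ref{t:potential_recovery_wave} at all. It constructs geometric optics solutions involving \emph{two} eikonal functions $\psi_1,\psi_2$ solving $|\nabla_g\psi_1|=1$ and $|\nabla_{cg}\psi_2|=1$, together with a corrector amplitude $a_3$ whose source involves $e^{i\lambda(\psi_1-\psi_2)}$, and reduces the DN-map estimate to a bound on the X-ray transform (with respect to $g$) of the scalar quantity $\rho = c^{d/2-1}(1-c)$. The smallness $\epsilon$ enters only at the very end to close a nonlinear interpolation estimate involving $\Vert\rho_0\Vert_{\mathcal{C}^2(M)}$, not to absorb a principal-symbol perturbation. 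Your third-step elliptic argument for converting a bound on $q_c$ into a bound on $1-c$ is plausible in isolation, but it rests on the first two steps, which fail. Note also that Theorem \ref{t:potential_recovery_wave} requires $q_1$ and $q_2$ to be potentials for the \emph{same} fixed metric $g$, whereas your transformed operator $\partial_t^2 + c^{-1}\Delta_g + q_c$ has a different metric part, so that theorem does not apply even formally.
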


\subsection{Method of proof}
To obtain the stability results, we use the general method of \cite{Stefanov_Ulhmann05,Bellassoued10,Bellassoued11} of reducing the problem to some estimate on X-ray transform of $q_1-q_2$. We however need to perform several important modifications due to trapping. Ultimately we rely on some results of the second author \cite{Guillarmou17} on the injectivity and stability estimates of the X-ray transform for the class of manifold under study, but it is not a simple reduction to that problem, as we now explain.
We first follow the well known route of constructing WKB solutions $u$ of the Schr\"odinger/wave equation concentrating on each geodesic $\gamma$ of length less or equal to $T_0>0$ with endpoints on the boundary. We use the universal covering of $M$ to construct $u$ since $M$ is not assumed simply connected.  
We can then bound the integral of $q_1-q_2$ along these geodesics by a constant times $\|\Lambda_{g,q_1}^{S/W}-\Lambda_{g,q_2}^{S/W}\|_{*,\nu}$. The non-simple metric assumption complicates that step compared to the simple metric case, due to the fact that geodesics self intersect. In the Schr\"odinger equation, using the infinite speed of propagation, we can take $T_0$ as large as we want by taking WKB solutions with frequencies $\lambda\gg T_0/T$, while for the wave we need to know the DN map on time $[0,\infty)$ to be able to let $T_0$ be arbitrarily large. We then use some  estimate on the volume of the set of geodesics staying in $M^\circ$  for time $\leq T_0$: this volume decays exponentially in $T_0$. We deduce that the transform $I_0^*I_0(q_1-q_2)$ of $q:=q_1-q_2$ can be controled in $L^2$ by 
\begin{equation}\label{bound1} 
Ce^{C_0T_0}\|\Lambda_{g,q_1}^{S/W}-\Lambda_{g,q_2}^{S/W}\|^{1/4}_{*,\nu}\|q\|^{1/2}_{W^{1,\infty}}+Ce^{-\epsilon T_0}\|q\|_{L^\infty}
\end{equation}
for some $C_0>0,C>0,\epsilon>0$ independent of $T_0$. Here $I_0: L^\infty(M)\to L^2_{\rm loc}(\pl SM\setminus \Gamma_-)$ is the X-ray transform defined by 
\[ I_0q(z):=\int_{0}^{\tau_+(z)}q(\pi_0(\varphi_t(z))){\rm d}t,\]
that extends continuously to $L^\infty(M)\to L^2(\pl SM)$ by \cite{Guillarmou17}; here $\pi_0:SM\to M$ is the projection on the base.
For simple metrics, it is well-known (see \cite{Pestov-Uhlmann-05}) that the normal operator $\Pi_0:=I_0^*I_0$ is an elliptic pseudo-differential operator  of order $-1$ 
thus satisfying $\|\Pi_0f\|_{H^s}\geq C_s\|f\|_{H^{s-1}}$ for all $s\geq 0$ and $C_s>0$ depending on $s$. In \cite{Guillarmou17}, using anisotropic Sobolev spaces an Fredholm theory for vector fields generating Axiom A flows \cite{Dyatlov-Guillarmou-16}, it is shown that the same properties hold on $\Pi_0$ for metrics with no conjugate points and hyperbolic trapping. We can then bound the norm of $\|q_1-q_2\|$ by a constant times some norm $\|\Pi_0(q_1-q_2)\|$, which in turn is bounded by \eqref{bound1}. 
Taking $T_0$ large enough (depending on $\|\Lambda_{g,q_1}^{S/W}-\Lambda_{g,q_2}^{S/W}\|_{*,\nu}$) and using interpolation estimates, we can then show that the second term of \eqref{bound1} can be absorbed into the first term, and we obtain the desired stability bound. The case of the recovery of the conformal factor is using a similar type of arguments.

We make a final comment about the assumption $q_1=q_2$ on $\pl M$ (resp. $c=1$ near $\pl M$): this assumption 
could be removed by  standard arguments provided the potentials $q_i$ (resp. for $c$) have uniform bounds in $\mc{C}^k(M)$ for $k$ large enough. Since this amounts to construct geometrical optics solutions concentrated on very short geodesics almost tangent to $\pl M$,
the proof is basically the same as in  \cite[Section 3]{Stefanov_Ulhmann05} and \cite[Theorem 2]{Montalto} in the case of the wave equation, 
and a slight variation in the case of Schr\"odinger equation. We write this short argument in an Appendix, where $q_j\in \mc{C}^4(M)$ (resp. $q_j\in \mc{C}^8(M)$) is sufficient for the wave (resp. Schr\"odinger) equation.\\

\textbf{Acknowledgements.} This project has received funding from the European Research Council (ERC) under the European Union’s Horizon 2020 research and innovation programme (grant agreement No. 725967). The second author acknowledges fruitful and enlightning discussions with Slava Kurylev and Lauri Oksanen few years ago on that problem. We would like to dedicate this work to the memory of Slava Kurylev, who showed particular enthousiasm on that problem.\\

\textbf{Notations:} In what follows, we shall use the notational convention of writing $C>0$ for constants appearing in upper/lower bounds , where this constant may change from line to line, and we shall indicate its dependence on the parameters of our problem when this is important.

\section{Geometric setting and dynamical properties of the geodesic flow}

In this section we recall, for $(M,g)$ a Riemannian manifold with strictly convex boundary, some notions about the geometry of the unit tangent bundle 
$SM:=\{(x,v)\in TM \, |\, g_x(v,v)=1\}$ and the dynamics of the geodesic flow on $SM$. Let
$$
\pi_0 \, : \, SM \to M, \quad \pi_0(x,v) = x,
$$
be the natural projection on the base. We will denote by $X$ the geodesic vector field on $SM$
defined by $Xf(x,v)=\partial_t f(\gamma_{(x,v)}(t),\dot{\gamma}_{(x,v)}(t))|_{t=0}$ where $\gamma_{(x,v)}(t)$ is the unit speed geodesic with initial condition $(\gamma_{(x,v)}(0),\dot{\gamma}_{(x,v)}(0))=(x,v)$. We will denote by $\varphi_t(x,v)=(\gamma_{(x,v)}(t),\dot{\gamma}_{(x,v)}(t))$ the geodesic flow, which in turn is the flow of the vector field $X$.

The incoming ($-$) and outgoing ($+$) boundaries of the unit tangent bundle of $M$ are defined by
$$
\partial_{\pm} SM := \{ (x,v) \in SM \,| \, x \in \partial M, \; \mp g_x (v, {\rm n}) > 0 \},
$$
where ${\rm n}$ is the inward pointing unit normal vector field to $\partial M$. For any $(x,v) \in SM$, define the forward and backward escaping time
\begin{align*}
& \tau_+(x,v)=\sup\{ t\geq 0\,|\, \varphi_t(x,v)\in \pl SM\textrm{ or }\forall s\in (0,t), \varphi_s(x,v)\in SM^\circ \}\in [0,+\infty],\\ & \tau_-(x,v):=-\tau_+(x,-v)\in [-\infty,0],
\end{align*}
which satisfies $X\tau_+=-1$ in $SM$ with $\tau_+|_{\pl_+SM}=0$.
For $(x,v)\in \pl_-SM$, the geodesic $\gamma_{(x,v)}$ with initial point $x$ and tangent vector $v$ either has infinite length (i.e. $\tau_+(x,v)=+\infty$) 
or it intersects $\pl M$ at a boundary point $x' \in \partial M$ with tangent vector $v'$ with $(x',v') \in \partial_+ SM$.  
The incoming ($-$) and outgoing ($+$) tails in $SM$ are defined by
\[
\Gamma_{\mp} = \{ z \in SM \, | \, \tau_{\pm}(z) = \pm \infty \},
\]
and the trapped set for the flow on $SM$ is the set 
\[K := \Gamma_+ \cap \Gamma_-.\]
It is a compact subset of $SM^\circ$ that is flow invariant (\cite{Guillarmou17}).
We define the subset $\mathcal{T}_+(t)\subset SM$ given by the points $(x,v) \in SM$ for which the orbit of the geodesic flow issued from $(x,v)$ remains in $SM$ after time $t$: 
$$
\mathcal{T}_+(t) := \{ (x,v) \in SM \,| \, \tau_+(x,v)\geq t\}.
$$
We define the \textit{non-escaping mass function} $V(t)$ as
$$
V(t) := \operatorname{Vol}(\mathcal{T}_+(t)),
$$
where $\operatorname{Vol}$ is the volume with respect to the Liouville measure $\mu$ on $SM$. Let us also denote
\begin{equation}
\label{e:boundary_of_trapped_tube}
\mathcal{T}_+^{\pl SM}(t) :=\mathcal{T}_+(t) \cap \partial_-SM.
\end{equation} 
The \textit{escape rate} $Q \leq 0$ measures the exponential rate of decay of $V(t)$. It is given by:
\begin{equation}\label{defofQ}
Q := \limsup_{t \to + \infty} \frac{1}{t} \log V(t).
\end{equation}
By \cite[Prop. 2.4]{Guillarmou17}, if the trapped set $K$ is hyperbolic, then $Q={\rm Pr}(-J_u)$ is the topological pressure of (minus) the unstable Jacobian  $J_u := \pl_t \det({\rm d}\varphi_t|_{E_u})|_{t=0}$ 
of the geodesic flow on the trapped set $K$, and it satisfies
\[ Q={\rm Pr}(-J_u)<0.\] 
Let ${\rm d}\mu_{{\rm n}}$ be the measure on $\partial SM$ defined by
\[
{\rm d}\mu_{{\rm n}}(x,v) := \vert g_x(v,{\rm n})| \iota^* |{\rm d}\mu(x,v) \vert,
\]
where $|{\rm d}\mu|$ is the Liouville density, and 
$\iota : \partial SM \to SM$
is the inclusion map. When $\operatorname{Vol}(\Gamma_- \cup \Gamma_+) = 0$, then $\operatorname{Vol}_{\partial SM}( \Gamma_{\pm} \cap \partial_{\pm} SM) = 0$ and one can use  Santalo's formula (\cite[Section 2.5]{Guillarmou17}) to integrate functions in $SM$: for all $f \in L^1(SM)$:
\begin{equation}\label{santalo}
\int_{SM} f {\rm d}\mu = \int_{\partial_- SM \setminus \Gamma_-} \int_0^{\tau_+(x,v)} f \circ \varphi_t(x,v) \,{\rm d}t \, {\rm d}\mu_{{\rm n}}(x,v).
\end{equation}

It is convenient to view $(M,g)$ as a strictly convex region of a larger smooth manifold $(M_e,g_e)$ with strictly convex boundary so that each geodesic in $M_e\setminus M$ has finite length with endpoints on $\pl M_e\cup \pl M$.  The existence of such extension is proved in \cite[Sect. 2.1 and Lemma 2.3]{Guillarmou17}. Moreover, if $(M,g)$ has hyperbolic trapped set and no conjugate points, one can choose $(M_e,g_e)$ with the same properties as $(M,g)$, as is shown in \cite[Lemma 2.3]{Guillarmou17}. The vector field $X$ and the flow $\varphi_t$ are extended in $SM_e$ and we define the function $\tau_\pm^e$ on $SM_e$ just as we did for $\tau_+$ on $SM$. The trapped set of the flow in $SM_e$ is still $K\subset SM^\circ$, the incoming tail $\Gamma_\pm^e$ on $SM_e$ is 
$\Gamma_\pm^e=\cup_{t\geq 0}\varphi_{\pm t}(\Gamma_\pm)\cap SM_e$ and $\Gamma_\pm^e\cap SM=\Gamma_\pm$. 


\section{The $X$-ray transform}
\label{s:X_ray_transform}

In this section we recall from \cite{Guillarmou17} the main properties of the $X$-ray transform acting on functions in our geometric setting. Let $(M,g)$ be a smooth compact Riemannian manifold with strictly convex boundary and $M_e$ a small extension with the same property.

The $X$-ray transform $I$ is defined as the map:
$$
I \, : \, \mathcal{C}_c^\infty(SM \setminus (\Gamma_-\cup\Gamma_+)) \to \mathcal{C}_c^\infty(\partial_- SM \setminus \Gamma_-), \quad If(x,v) := \int_0^{\tau_+(x,v)} f \circ \varphi_t(x,v) {\rm d}t.
$$
The $X$-ray transform can be extended to more general spaces. If $\operatorname{Vol}(K) = 0$, then Santalo's formula implies that the operator $I$ extends as a bounded operator
$$
I : L^1(SM) \to L^1(\partial_- SM ; d \mu_{{\rm n}}).
$$
When moreover the escape rate $Q$ of \eqref{defofQ} satisfies  $Q< 0$ then, by \cite[Lemma 5.1]{Guillarmou17}, one has that
\begin{equation}\label{boundednessI}
\forall p>2,\quad  I : L^p(SM) \to L^2(\partial_- SM, d\mu_{{\rm n}}).
\end{equation}
For our purposes to extend the results of \cite{Bellassoued10}, it is more convenient to deal with the $X$-ray transform acting on functions in $\mathcal{C}^\infty(M)$. The projection $\pi_0 : SM_e \to M_e$ on the base induces a pullback map
$$
\pi_0^* : \mathcal{C}_c^\infty(M_e^\circ) \to \mathcal{C}^\infty(SM_e^\circ), \quad \pi_0^* f := f \circ \pi_0,
$$
and a pushforward map $\pi_{0*}$ defined by duality:
$$
\pi_{0*} : \mathcal{D}'(SM_e^\circ) \to \mathcal{D}'(M_e^\circ), \quad \langle \pi_{0*} u, f \rangle := \langle u, \pi_0^* f \rangle.
$$
When acting on $L^1$ functions, the pushforward $\pi_{0*}$ acts as
$$
\pi_{0*} f (x) := \int_{S_xM} f (x,v) {\rm d}\omega_x(v).
$$
where ${\rm d}\omega_x$ is the measure on $S_xM$ induced by $g$.
The pullback by $\pi_0$ gives a bounded operator $\pi_0^* : L^p(M) \to L^p(SM)$ for all $p \in (1, \infty)$. We define the $X$-ray transform on functions by 
\[I_0 = I \pi_0^*.\] 
If $Q < 0$ then $I_0$ extends as a bounded operator 
\begin{equation}\label{I_0Lp}
I_0 := I \pi_0^* :L^p(M) \to L^2(\partial_- SM, d\mu_{{\rm n}}),
\end{equation}
for any $p > 2$. The adjoint $I_0^* : L^2(\partial_- SM, d\mu_{{\rm n}}) \to L^{p'}(M)$ is bounded for $1/p' + 1/p =1$, and it is given precisely by $I_0^* = \pi_{0*} I^*$. The operator $\Pi_0$ is defined as the bounded self-adjoint operator 
$$
\Pi_0 : I_0^* I_0 = \pi_{0*} I^* I \pi_0^* :L^p(M) \to L^{p'}(M), \quad \frac{1}{p} + \frac{1}{p'} = 1, \quad p > 2.
$$
Similarly, we define the extended $X$-ray transform $I_0^e$ associated with $M_e$, and 
\begin{equation}\label{normalop}
\Pi_0^e = {I_0^e}^*I_0^e: L^p(M) \to L^{p'}(M),\quad \frac{1}{p} + \frac{1}{p'} = 1, \quad p > 2.
\end{equation} 
\begin{lemma}{\cite[Prop. 5.7]{Guillarmou17}}\label{Pi_0^e}
Assume that $(M,g)$ has strictly convex boundary, no conjugate points and hyperbolic trapped set and let $(M_e,g_e)$ be an extension with the same properties, and with same trapped set.
The operator $\Pi_0$, resp. $\Pi_0^e$, is an injective elliptic pseudo-differential operator of order $-1$ in $M^\circ$, resp. $M_e^\circ$, with principal symbol $\sigma(\Pi^e_0)(x,\xi) = C_d \vert \xi \vert_g^{-1}$ for some constant $C_d>0$ depending only on $d=\dim M$. 
For each $k\in \Z$ and each compact subset $\Omega\subset M_e^\circ$ with smooth boundary, there exists $C_1,C_2>0$ such that for all $f\in C_c^\infty(\Omega)$
\begin{equation}\label{estimatenormalop}
C_1 \Vert f \Vert_{H^{k}(M_e)} \leq \Vert \Pi_0^ef \Vert_{H^{k+1}(M_e)} \leq C_2 \Vert f \Vert_{H^k(M_{e})}.
\end{equation}
\end{lemma}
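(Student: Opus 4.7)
The statement is essentially Proposition 5.7 of \cite{Guillarmou17}, so my plan follows the strategy developed there. The first step is to identify $\Pi_0^e = I_0^{e*} I_0^{e}$ with a pushforward of an operator acting on the sphere bundle: writing $\Pi_0^e = \pi_{0*} (I^* I) \pi_0^*$, one relates $I^* I$ to the forward and backward ``resolvents'' $R_\pm$ of the geodesic vector field $X$, where $R_+ f(z) = \int_0^{\tau_+^e(z)} f(\varphi_t z)\,\mathrm{d}t$ and similarly for $R_-$. By direct unfolding, $I^* I$ is (up to a boundary term that vanishes when pushed down to functions on $M_e^\circ$ with compact support) the symmetrized sum $R_+ + R_-$. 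On appropriate anisotropic Sobolev spaces, built as in \cite{Dyatlov-Guillarmou-16} to resolve the hyperbolic trapped set $K$, both $R_\pm$ are bounded and their Schwartz kernels have wavefront set contained in the union of the diagonal, the (flowout of the) conormal to the flow, and the stable/unstable bundles $E_s \oplus E_u$ at $K$.

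Next, I would analyze the pseudodifferential structure after pushforward. Away from $K$, all geodesics exit in finite time, the extension $(M_e, g_e)$ is effectively simple on those pieces, and the standard local computation from Pestov–Uhlmann gives that $\pi_{0*}(R_+ + R_-)\pi_0^*$ is a classical $\Psi$DO of order $-1$ with principal symbol $C_d |\xi|_g^{-1}$; this part relies on the absence of conjugate points to ensure that only the diagonal contribution survives in the wavefront calculation, via the condition $\mathrm{d}\varphi_t(\mathcal{V}) \cap \mathcal{V} = \{0\}$. The trapping contribution—the piece of the kernel coming from orbits that spend long time near $K$—gets smoothed out by the pushforward because the vertical fibers $S_xM$ are transverse to the flow-invariant bundles $E_s \oplus E_u$ on $K$, so integration over $S_xM$ converts the rough anisotropic regularity into ordinary smoothness in $M_e^\circ$. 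This is the core technical input and the step I expect to be the most delicate.

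The ellipticity of $\Pi_0^e$ combined with standard pseudodifferential regularity yields the upper bound $\|\Pi_0^e f\|_{H^{k+1}(M_e)} \leq C_2 \|f\|_{H^k(M_e)}$ immediately, and reduces the lower bound to establishing injectivity of $\Pi_0^e$ on $\mathcal{C}_c^\infty(\Omega)$: once this is known, a standard compactness/parametrix argument (elliptic estimate + absence of kernel in the relevant Sobolev space + interpolation) produces the constant $C_1$ uniformly on the fixed compact set $\Omega \subset M_e^\circ$. For injectivity, if $\Pi_0^e f = 0$ for $f \in \mathcal{C}_c^\infty(\Omega)$, then $\|I_0^e f\|_{L^2(\partial_- SM_e, \mathrm{d}\mu_{\mathrm{n}})}^2 = \langle \Pi_0^e f, f \rangle = 0$, so $I_0^e f = 0$; I would then invoke the $s$-injectivity of the X-ray transform on functions for the class of manifolds under study, proved in \cite{Guillarmou17} via a Pestov-type energy identity adapted to anisotropic Sobolev spaces and exploiting both the hyperbolicity of $K$ (which forces $V(t)$ to decay exponentially and hence gives enough integrability of transport solutions) and the no-conjugate-points hypothesis (which delivers the positivity in Pestov's identity).

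The main obstacle is the pseudodifferential/symbolic step: controlling the Schwartz kernel of $\pi_{0*}(R_+ + R_-)\pi_0^*$ near the trapped directions without losing the classical order $-1$ principal symbol. Once that is in hand—using the anisotropic microlocal machinery of \cite{Dyatlov-Guillarmou-16,Guillarmou17}—the injectivity and two-sided estimates follow along the lines of the simple-manifold proof.
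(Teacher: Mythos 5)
The paper gives no proof of this lemma: it is quoted directly from Proposition~5.7 of \cite{Guillarmou17}, so there is no internal argument to compare against. Your reconstruction follows the strategy of that reference faithfully — identifying ${I^e}^* I_0^e$ with $(R_++R_-)\pi_0^*$ (the formula ${I^e}^*(I_0^ef)(z)=\int_{\tau_-^e(z)}^{\tau_+^e(z)}\pi_0^*f(\varphi_t(z))\,\mathrm{d}t$ recorded just after the lemma in this paper is precisely that identity), using the anisotropic Sobolev machinery of \cite{Dyatlov-Guillarmou-16} to bound $R_\pm$ and control their Schwartz kernels, then passing through $\pi_{0*}(\cdot)\pi_0^*$ to obtain an elliptic classical $\Psi$DO of order $-1$, and finally reducing the left-hand inequality in \eqref{estimatenormalop} to the $s$-injectivity of $I_0^e$ together with a standard parametrix-plus-compactness argument. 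All of this is as in \cite{Guillarmou17}.

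One imprecision worth flagging in the microlocal step: the wavefront set of the kernels of $R_\pm$ should be phrased in terms of the dual bundles $E_u^*\times E_s^*$ over $K$ together with the flowout of the conormal to the diagonal, not $E_s\oplus E_u$ themselves. The pushforward kills the trapped contribution because these codirections avoid the conormal bundle to the fibers of $\pi_0$; this is equivalent to $E_u\cap\mathcal V=E_s\cap\mathcal V=\{0\}$ over $K$, which is where the no-conjugate-points hypothesis enters (it is the same hypothesis, applied twice, that handles both the off-diagonal flowout and the $K$-contribution). Your heuristic phrase "vertical fibers transverse to $E_s\oplus E_u$" does capture this, but the rigorous argument runs through the dual bundles and the conormal to $\mathcal V$, as in Lemma~5.5 and the surrounding discussion in \cite{Guillarmou17}.
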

Moreover, a direct calculation yields for $z\not\in\Gamma_+^e\cup \Gamma_-^e$
\[ {I^e}^*(I_0^ef)(z)=\int_{\tau_-^e(z)}^{\tau_+^e(z)}\pi_0^*f(\varphi_t(z)){\rm d}t, \quad I^*(I_0f)(z)=\int_{\tau_-(z)}^{\tau_+(z)}\pi_0^*f(\varphi_t(z)){\rm d}t\]
and thus if $f\in L^p(M_e)$ satisfies $\supp f\subset M$, we have 
${I^e}^*(I_0^ef)={I}^*(I_0f)$ on $SM\setminus (\Gamma_+\cup \Gamma_-)$. In particular this implies that
\begin{equation}\label{extensionPi_0f}
(\Pi_0^ef)|_{M}=\Pi_0f.
\end{equation}
Since pseudo-differential operators of order $-1$ map $W^{s,p}_{\rm comp}(M_e^\circ)$ continuously 
to $W^{s+1,p}_{\rm loc}(M_e^\circ)$ for all $(s,p)\in \R\times (1,\infty)$ (see \cite[Thm. 0.11.A]{Taylor91}), \eqref{extensionPi_0f} implies that
\begin{equation}\label{extentionPi0f2}
f\in W_0^{s,p}(M)\Longrightarrow \Pi_0f\in W^{s+1,p}(M), \quad 
f\in W_0^{s,p}(M_e)\Longrightarrow \Pi_0^ef\in W^{s+1,p}(M_e)
\end{equation}
where $W^{s,p}(M)$ denotes the Sobolev space (with $s$ derivatives in $L^p$) on the manifold with boundary $M$,  $W_0^{s,p}(M)$ is the closure of $C_c^\infty(M^\circ)$ for the $W^{s,p}(M)$ topology, and similarly on $M_e$.

\section{Geometrical optics solutions}

We will assume along this section that $(M,g)$ is a smooth compact Riemannian manifold with boundary such that 
\begin{itemize}
\item The boundary $\pl M$ is strictly convex,
\item The metric $g$ has no pairs of conjugate points, 
\item The trapped set $K$ is hyperbolic.
\end{itemize}
We shall take $(M_e,g_e)$ an extension of $(M,g)$ with the same properties and for notational simplicity we will write $g$ instead of $g_e$ for the extended metric on $M_e$.

\subsection{Geometrical optics for the Schrödinger equation}

In this section we generalize the geometrical optics solutions given in \cite[Sect. 4]{Bellassoued10} for simple manifolds to our geometric setting.
Since the map $\exp_x^{-1}(M) \to M$, with $x \in M$, is no longer a diffeomorphism, but the exponential map behaves well on the universal cover of $M$, we then make the construction in the universal cover of $M$, periodize it with respect to the fundamental group $\pi_1(M)$, and then project it down to $M$. 

We first recall the following:
\begin{lemma}{\cite[Lemma 3.2 and eq (3.5)]{Bellassoued10}}
\label{l:solution_homogeneous}
Let $T > 0$ and $q \in L^\infty(M)$. If $F \in W^{1,1}([0,T]; L^2(M))$ 
such that $F(0,\cdot) \equiv 0$, then the unique solution $v$ to
$$
\left \lbrace \begin{array}{ll}
( i \partial_t - \Delta_g + q(x)) v(t,x) = F(t,x) & \text{in } (0,T) \times M, \\
v(0,x) = 0 & \text{in } M, \\
v(t,x) = 0 & \text{on } (0,T) \times \partial M,
\end{array} \right.
$$
satisfies
\[
v \in \mathcal{C}^1([0,T];L^2(M)) \cap \mathcal{C}([0,T]; H^2(M) \cap H_0^1(M)).
\]
In addition, there is a $C> 0$ depending on $(M,g),T$ and $\|q\|_{L^\infty}$ 
such that for any $\eta > 0$ small and $t\in [0,T]$ 
\begin{equation}\label{eq3.5DDSF}
\Vert v(t,\cdot) \Vert_{L^2(M)}\leq C\int_0^T \|F(s,\cdot)\|_{L^2(M)}{\rm d}s
\end{equation}
\begin{equation}\label{boundH1DDSF}
\Vert v(t, \cdot) \Vert_{H_0^1(M)} \leq C \big( \eta \Vert \partial_t F \Vert_{L^1([0,T];L^2(M))} + \eta^{-1} \Vert F \Vert_{L^1([0,T];L^2(M))} \big).
\end{equation}
\end{lemma}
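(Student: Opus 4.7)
The plan is to recast the inhomogeneous IBVP as an abstract evolution equation on $L^2(M)$ driven by a $C_0$-group, and then combine Duhamel's formula with elliptic regularity and an interpolation trick to extract the two bounds.

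First I would introduce the operator $A := -\Delta_g + q$ on $L^2(M)$ with domain $D(A) := H^2(M) \cap H_0^1(M)$. Since the Dirichlet Laplacian is self-adjoint and nonnegative and $q \in L^\infty(M)$ acts as a bounded (possibly complex) multiplication perturbation, the bounded perturbation theorem ensures that $iA$ generates a strongly continuous group $\{U(t)\}_{t\in\R}$ on $L^2(M)$ with $\Vert U(t)\Vert_{L^2 \to L^2} \leq e^{|t|\,\Vert q\Vert_{L^\infty}}$. The unique mild solution is then given by the Duhamel formula
\[v(t) = -i\int_0^t U(t-s)\,F(s)\,{\rm d}s,\]
and the hypothesis $F \in W^{1,1}([0,T];L^2(M))$ together with $F(0) = 0$ upgrades $v$ to a classical solution in $\mathcal{C}^1([0,T];L^2(M)) \cap \mathcal{C}([0,T];D(A))$ by the standard regularity theorem for semigroup-driven evolutions (e.g.\ Pazy, Ch.~4). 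Estimate \eqref{eq3.5DDSF} follows immediately from this formula and the group bound, with the $T$-dependence absorbed in the factor $e^{T\Vert q\Vert_{L^\infty}}$.

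For the $H^1$ estimate \eqref{boundH1DDSF} I would differentiate the PDE in time. Setting $w := \partial_t v$, one verifies that $w$ solves the same IBVP with source $\partial_t F$: the boundary trace vanishes because $v|_{\pl M} \equiv 0$ for all $t$, and evaluating the equation at $t = 0$ gives $w(0) = 0$ from $v(0) = 0$ and $F(0) = 0$. Applying the $L^2$ bound from the first paragraph to $w$ yields $\Vert \partial_t v(t)\Vert_{L^2} \leq C \Vert \partial_t F\Vert_{L^1([0,T];L^2)}$. Rewriting the PDE as $\Delta_g v = i\partial_t v + qv - F$ and invoking Dirichlet elliptic regularity then gives
\[\Vert v(t)\Vert_{H^2(M)} \leq C\bigl(\Vert \partial_t v(t)\Vert_{L^2} + \Vert v(t)\Vert_{L^2} + \Vert F(t)\Vert_{L^2}\bigr).\]
Using $\Vert F(t)\Vert_{L^2} \leq \Vert \partial_t F\Vert_{L^1([0,T];L^2)}$ (which holds by the fundamental theorem of calculus and $F(0) = 0$) together with the $L^2$ bound for $v$ itself, one deduces $\Vert v(t)\Vert_{H^2} \leq C\bigl(\Vert F\Vert_{L^1([0,T];L^2)} + \Vert \partial_t F\Vert_{L^1([0,T];L^2)}\bigr)$.

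To finish, I would use the interpolation inequality $\Vert v\Vert_{H^1(M)} \leq C\Vert v\Vert_{L^2(M)}^{1/2}\Vert v\Vert_{H^2(M)}^{1/2}$, valid on $H^1_0(M)$, together with Young's inequality $ab \leq \tfrac12(\eta^{-1}a^2 + \eta b^2)$ applied to $a = \Vert v\Vert_{L^2}^{1/2}$, $b = \Vert v\Vert_{H^2}^{1/2}$, yielding $\Vert v\Vert_{H^1} \leq C(\eta^{-1}\Vert v\Vert_{L^2} + \eta \Vert v\Vert_{H^2})$. Substituting the two bounds above, the cross contribution $\eta \Vert F\Vert_{L^1([0,T];L^2)}$ is absorbed into $\eta^{-1}\Vert F\Vert_{L^1([0,T];L^2)}$ for $\eta \leq 1$, producing exactly \eqref{boundH1DDSF}. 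No serious conceptual obstacle is expected; the only technical care required is a rigorous justification of the time differentiation and elliptic bounds, which is handled by approximating $F$ with smooth sequences vanishing near $t = 0$ and passing to the limit using the $L^2$ stability already established.
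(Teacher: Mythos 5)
The paper does not prove this lemma; it is imported verbatim from \cite[Lemma 3.2 and eq.\ (3.5)]{Bellassoued10}, so there is no ``paper's own proof'' to compare against. Your argument is correct and follows the standard semigroup route one would expect (and that is used in the reference): Duhamel with the $C_0$-group generated by $i(-\Delta_g+q)$ gives the $L^2$ bound, differentiating in time and using $F(0)=0$ together with the equation at $t=0$ shows $w=\partial_t v$ is the mild solution with source $\partial_t F$, Dirichlet elliptic regularity yields the $H^2$ bound, and the $L^2$--$H^2$ interpolation with Young's inequality produces the $\eta$--weighted $H^1$ estimate. Two small points to keep in mind: the rigorous identification of $\partial_t v$ with the mild solution driven by $\partial_t F$ can be done directly by changing variables in the Duhamel integral to $v(t)=-i\int_0^t U(\sigma)F(t-\sigma)\,{\rm d}\sigma$ and differentiating (the boundary term $-iU(t)F(0)$ vanishes precisely because $F(0)=0$), which avoids the approximation argument you flagged at the end; and for real $q$ the group $U(t)=e^{itA}$ is in fact unitary, so the cruder perturbation bound $e^{|t|\|q\|_{L^\infty}}$ is only needed if complex $q$ is allowed.
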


Let us consider extensions $M \Subset M_e \Subset M_{ee}$ of the manifold $M$ and extend the metric $g$ smoothly in a way that $(M_e,g_e)$ has the same properties as $(M,g)$. The potentials $q_1,q_2$ may also be extended to $M_{ee}$ and their $W^{1,\infty}(M)$ norms may be bounded by $N_0$. Since $q_1$ and $q_2$ concide on the boundary, their extension outside $M$ can be taken so that $q_1 = q_2$ in $M_{ee} \setminus M$.

We first recall the construction, following \cite[Sect. 4]{Bellassoued10}, of a geometric optics solution for simple manifolds and we will explain how to extend it to our setting.
If $(M,g)$ is a simple manifold, a geometric optics solution $u \in \mathcal{C}^1([0,T];L^2(M)) \cap \mathcal{C}([0,T];H^2(M))$ for the Schr\"odinger equation 
\begin{equation}
\label{e:IVP_for_geometric_optics}
\begin{array}{rl} 
(i \partial_t -\Delta_g + q(x)) u = 0, &\quad \text{in } \; (0,T) \times M, \\
 u(0,x) = 0, &
 \end{array}
\end{equation}
can be constructed  in terms of a function $\psi \in \mathcal{C}^2(M)$ satisfying the eikonal equation
\[
\vert \nabla^g \psi(x) \vert_g = 1, \quad \forall x \in M,
\]
and a function $a \in H^1(\R; H^2(M))$ solving the transport equation
\begin{equation}
\label{transporteq}
\begin{gathered}
\frac{\partial a}{\partial t} +  {\rm d}a(\nabla^g\psi)-\frac{1}{2} ( \Delta_g \psi_y ) a = 0, \quad \forall t \in \R, \quad x \in M,\\
\textrm{ with } a(t,x) = 0, \quad \forall x \in M, \quad \text{and } t \leq 0, \; \text{or } t \geq T_0,
\end{gathered}
\end{equation}
for some $T_0$ sufficiently large (which in the simple case is taken to satisfy $T_0 > 1 + \operatorname{Diam}(M_e)$ where $\operatorname{Diam}(M_e)$ is the $g$-diameter of $M_e$), and ${\rm d}a$ is the exterior derivative of $a$. 
More precisely if $\pl M_e$ is chosen close enough to $\pl M$ so that $(M_e,g)$ is a simple manifold, one can define, for any fixed $y \in \partial M_e$,
\[
\psi(x) = \psi_y(x) := d_g(y,x), \quad x \in M_e.
\]
Using geodesic polar coordinates we can write each $x\in M_e$ as 
\[ x= \exp_y(r(x)v(x)), \quad r(x)=d_g(y,x), \quad v(x)\in S_yM_e\]
where $\exp_y:TM_e\to M_e$ denotes the exponential map at $y$ for the metric $g$.
One defines a solution to the transport equation $a \in H^1(\R ; H^2(M))$ in polar coordinates as
\[
a(t,x) := \alpha^{-1/4}(r(x),v(x)) \phi(t-r(x)) b(y,v(x)),
\]
 where $\alpha = \alpha(r,v) =|\det(g_{ij}(r,v))|$ denotes the square of the volume element in geodesic polar coordinates, $\phi \in \mathcal{C}_c^\infty(\R)$ satisfies $\operatorname{supp} \phi \subset (0,\varepsilon_0)$ for $\varepsilon_0 > 0$ small, and $b \in H^2(\partial_-SM_e)$ is a fixed initial data.

A geometrical optics solution $u \in \mathcal{C}^1([0,T];L^2(M)) \cap \mathcal{C}([0,T];H^2(M))$ for \eqref{e:IVP_for_geometric_optics} is  then defined by $u(t,x) = G_\lambda(t,x) + v_\lambda(t,x)$, where
$$
G_\lambda(t,x) := a(2 \lambda t,x) e^{i \lambda (\psi_y(x) - \lambda t)},
$$
and the remainder $v_\lambda$ satisfies (\cite[Lemma 4.1]{Bellassoued10}):
\begin{align*}
& v_\lambda(t,x)  = 0, \quad \forall (t,x) \in (0,T) \times \partial M, \\
& v_\lambda(0,x) = 0.
\end{align*}
Moreover, there exists $C > 0$ such that, for all $\lambda \geq T_0 /2T$,
\[
\Vert v_\lambda(t,\cdot) \Vert_{H^k(M)} \leq C \lambda^{k-1} \Vert a \Vert_{H^1([0,T_0]; H^2(M))}, \quad k=0,1.
\]
The constant $C$ depends only on $T$ and $(M,g)$. One can also construct a geometrical optics solution $u(t,x)$ if the initial condition $u(0,x) = 0$ is replaced by the final condition $u(T,x) = 0$ provided $\lambda \geq T_0/ 2T$; in this case $v_\lambda$ satisfies $v_\lambda(T,x) = 0$. 
\medskip

In our setting, that is, assuming that the trapped set $K$ is hyperbolic and that $g$ has no conjugate points, the construction is a bit more subtle, since the exponential map $\exp_y : \exp_y^{-1}(M) \to M$ is no longer a diffeomorphism. We work on the universal cover $\widetilde{M}$ of $M$, which is a non-compact manifold with boundary (the boundary has infinitely many connected components), 
whose interior is diffeomorphic to a ball. One has 
$$
M = \widetilde{M} / \pi_1(M),
$$
where the fundamental group $\pi_1(M)$ is identified with the group of deck transformations on $\widetilde{M}$, that is, the set of homeomorphisms $f : \widetilde{M} \to \widetilde{M}$ such that $\pi \circ f = \pi$, with the composition, where $\pi : \widetilde{M} \to M$ is the covering map. 
The metric $g$ lifts to a smooth metric $\tilde{g}$ on $\widetilde{M}$ satisfying $\gamma^*\tilde{g}=\tilde{g}$ for all $\gamma\in \pi_1(M)$. More generally, we denote by $\widetilde{\cdot}$ the lifted objects to the universal cover.
If $(M,g)$ is assumed to have no pair of conjugate points, $\widetilde{g}$ does not have pairs of conjugate points. Thus, for each $y\in \widetilde{M}$ the exponential map 
\[ \widetilde{\exp}_y: U_y\subset T\widetilde{M} \to \widetilde{M}\]
is a diffeomorphism for some simply connected set $U_y$. Similarly, we define the universal cover $\widetilde{M}_e$ of $M_e$, and note that $\pi_1(M)=\pi_1(M_e)$ so that each deck transformation $\gamma$ of $\widetilde{M}$ extends naturally as a deck transformation on $\widetilde{M}_e$.
Let us fix $y \in \partial M_e$ and we lift this point to $\tilde{y} \in \partial \widetilde{M}_e$. We can choose a fundamental domain $\mc{F} \subset \widetilde{M}$, so that $M= \mc{F}/\pi_1(M)$ via identification of the points of the boundary of $\mc{F}$ by the action of the elements of $\pi_1(M)$. Note that $\mc{F}$ has two types of boundary components, the boundary components $\pl_i\mc{F}$ in the interior $\widetilde{M}^\circ$ of $\widetilde{M}$ which are identified by elements of $\pi_1(M)$, and the boundary components $\mc{F}\cap \pi^{-1}(\pl M)$.
Similarly, we choose a fundamental domain $\mc{F}_e$ for $\pi_1(M_e)\simeq \pi_1(M)$ in $\widetilde{M}_e$ extending $\mc{F}$ and denote by $\pl_i\mc{F}_e$ the interior boundary of $\mc{F}_e$. 
We can freely assume that $\tilde{y}\in \mc{F}_e$ does not belong to the closure of $\pl_i\mc{F}_e$. Recall the definition of the volume entropy of $M$ 
\begin{equation}\label{volentropy} 
h(M,g):=\limsup_{R\to \infty}\frac{1}{R}\log {\rm Vol}_g(B_g(\tilde{y};R))
\end{equation} 
where $B_g(\tilde{y};R)\subset \widetilde{M}_g$ is the $\tilde{g}$-geodesic ball centered at $\tilde{y}$ of radius $R$. Since $M$ is compact, one has $h(M,g)<\infty$ (for instance by Bishop-Gromov comparison theorem) and $h(M,g)$ is not depending on the choice of $\tilde{y}$. 

We define on $\widetilde{M}$ the solution to the ``lifted" eikonal equation $\vert \nabla_{\tilde{g}} \psi_{\tilde{y}} \vert = 1$ given by
$$
\psi_{\tilde{y}}(\tilde{x}) := d_{\widetilde{g}}(\tilde{y}, \tilde{x}), \quad \tilde{x} \in \widetilde{M},
$$
where $d_{\tilde{g}}$ denotes the distance associated to the lifted metric $\tilde{g}$ on $\widetilde{M}$. Notice that $\psi_{\tilde{y}}$ is well defined and smooth outside $\tilde{x}=\tilde{y}$ since 
$\tilde{g}$ has no conjugate points (for any $\tilde{x} \in \widetilde{M}$ there is a unique geodesic joining $\tilde{y}$ with $\tilde{x}$ and realizing $d_{\tilde{g}}(\tilde{y},\tilde{x})$). Let 
$$
\pl_-S_{\widetilde{y}}\widetilde{M}_e :=  \{ v \in S_{\widetilde{y}}\widetilde{M}_e \, | \,  \langle v , \nu(\widetilde{y}) \rangle > 0 \}.
$$ 
Using geodesic polar coordinates on $\widetilde{M}$ we can write each $x\in \widetilde{M}_e$
\[x=\widetilde{\exp}_{\tilde{y}}(r(x)v(x)), \quad r(x)=\psi_{\tilde{y}}(x), \, v(x)\in S_{\tilde{y}}\widetilde{M}_e.\]
Fix $T_0>1+{\rm Diam}(M_e)$. For any given $b \in H^2(\partial_-SM_e)$ with $ b \vert_{\mathcal{T}_+^{\pl SM}(T_0)} = 0$ (recall definition \eqref{e:boundary_of_trapped_tube}), we denote by $\tilde{b}$ its lift to $\partial_- S\mathcal{F}_e$. Let $\phi \in \mathcal{C}_0^\infty(\R)$ with $\supp \phi \subset (0, \varepsilon_0)$, $\varepsilon_0 > 0$ small, we define
\begin{equation}
\label{e:solution_transport_equation}
 a(t,x) := \alpha^{-1/4}(r(x),v(x)) \phi(t-r(x)) \tilde{b}( \tilde{y},v(x)),
\end{equation}
where $\alpha(r,v)=|\det(g_{ij}(r,v))|$ denotes the square of the volume element  of $\widetilde{M}_e$ in geodesic polar coordinates. We introduce the norm $\Vert a \Vert_*$ on functions on $[0,T_0]\times \widetilde{M}$ given by
\[
\Vert a \Vert_* := \Vert a \Vert_{H^1([0,T_0];H^2(\widetilde{M}))}.
\]
For any $\lambda > 0$, we set
\begin{equation}\label{Glambda}
G_\lambda(t,x) := \sum_{\gamma \in \pi_1(M)} a(2 \lambda t, \gamma(x)) e^{i \lambda( \psi_{\tilde{y}}(\gamma(x)) - \lambda t)}, \quad t \in (0,T), \quad x \in \widetilde{M}_e,
\end{equation}
where $\gamma(x) $ denotes the lift of the point $\pi(x)\in M_e$ to the fundamental domain $\gamma(\mathcal{F}_e)$.  Notice that this definition does not depend on the choice of the lift $\widetilde{y}$ but on the base point $y$, and since $\supp(a(2\lambda t,\cdot))$ is contained in a fixed compact set  of $\widetilde{M}_e$ for $t\in [0,T]$, the sum in $\gamma \in \pi_1(M)$ is locally finite. Notice also that the condition $T_0 > 1 +  {\rm Diam}(M_e)$ together with $ b \vert_{\mathcal{T}_+^{\pl SM}(T_0)} = 0$ ensures that $G_\lambda(t,x)$ vanishes on $(0,T) \times \partial M$ provided that $2 \lambda T \geq T_0$, since the solution to the transport equation crosses the whole manifold $M_e$ in time $T$. Moreover, as $G_\lambda(t,\gamma x)=G_\lambda(t,x)$, the function $G_\lambda$ descends to $M_e$ (and will also be denoted $G_\lambda$ downstair), and satisfies $G_\lambda \in H^1([0,T]; H^2(M_e))$.
\begin{lemma}
\label{l:geometric_optics}
Let $q \in L^\infty(M)$. For $T_0>0$ and $T>0$,  the Schrödinger equation
\begin{align*}
(i \partial_t - \Delta_g + q(x) )u = 0, & \quad  \text{in } (0,T) \times M, \\
u(0,\cdot) = 0, & \quad \text{in } M,
\end{align*}
has a solution of the form
\[
u(t,x) = G_\lambda(t,x) + v_\lambda(t,x),
\]
with $G_\lambda$ given by \eqref{Glambda}, such that
\[
u \in \mathcal{C}^1([0,T]; L^2(M)) \cap \mathcal{C}([0,T]; H^2(M)),
\]
where $v_\lambda(t,x)$ satisfies, for $ \lambda \geq \frac{T_0}{2T}$,
\begin{align*}
v_\lambda(t,x) = 0, & \quad \forall (t,x) \in (0,T) \times \partial M, \\
v_\lambda(0,x) = 0, & \quad x \in M.
\end{align*}
Furthermore, for each $\epsilon>0$ there exists $C>0$ depending only on $(\epsilon,M,g,\|q\|_{L^\infty},T)$ but not on $T_0,y$, such that, for any $\lambda \geq \frac{T_0}{2T}$ the following estimate holds true:
\begin{equation}
\label{e:estimate_remainder_term}
\Vert v_\lambda(t, \cdot) \Vert_{H^k(M)} \leq C e^{(h+\epsilon)T_0} \lambda^{k-1} \Vert a \Vert_*, \quad k=0,1,
\end{equation}
where $ h = h(M,g)$ denotes the volume entropy of $(M,g)$. The result remains valid after replacing the initial condition $u(0,\cdot) = 0$ by the final condition $u(T,\cdot) = 0$.
\end{lemma}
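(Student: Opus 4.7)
The plan is to write $u = G_\lambda + v_\lambda$ where $G_\lambda$ is the explicit WKB sum \eqref{Glambda} and $v_\lambda$ corrects the residual error. Setting $P := i\partial_t - \Delta_g + q$ and $F_\lambda := -P G_\lambda$, I would look for $v_\lambda$ solving $P v_\lambda = F_\lambda$ with $v_\lambda(0,\cdot)\equiv 0$ and $v_\lambda|_{(0,T)\times\partial M}\equiv 0$. Since $\supp\phi\subset(0,\varepsilon_0)$, we have $a(0,\cdot)\equiv 0$, so $F_\lambda(0,\cdot)\equiv 0$, and the formula \eqref{e:solution_transport_equation} shows $F_\lambda\in W^{1,1}([0,T];L^2(M))$; thus Lemma \ref{l:solution_homogeneous} produces a unique $v_\lambda \in \mathcal{C}^1([0,T];L^2(M))\cap \mathcal{C}([0,T];H^2(M)\cap H_0^1(M))$, delivering the advertised regularity for $u$. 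The whole task is therefore to estimate $F_\lambda$.

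For the WKB cancellation I work first on the universal cover. A direct calculation on a single summand of \eqref{Glambda} yields
\[
P\bigl(a(2\lambda t,\gamma x)e^{i\lambda(\psi_{\tilde y}(\gamma x)-\lambda t)}\bigr) = e^{i\lambda(\psi_{\tilde y}(\gamma x)-\lambda t)}\Bigl(\lambda^2(1-|\nabla\psi_{\tilde y}|_g^2)a + 2i\lambda\, L(a) + (q-\Delta_g) a \Bigr),
\]
with $L(a) := \partial_s a + \langle \nabla a,\nabla \psi_{\tilde y}\rangle - \tfrac12(\Delta_g\psi_{\tilde y})a$ and all quantities evaluated at $(2\lambda t, \gamma x)$. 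Because $\tilde g$ has no conjugate points, $\psi_{\tilde y}$ is smooth on $\widetilde{M}_e\setminus\{\tilde y\}$ and satisfies the eikonal equation $|\nabla\psi_{\tilde y}|_g=1$, killing the $\lambda^2$ term; the formula \eqref{e:solution_transport_equation} is tailored to make $a$ solve the transport equation \eqref{transporteq}, i.e.\ $L(a)=0$, killing the $\lambda^1$ term. By $\pi_1(M)$-invariance of $\tilde g$ (and of $\psi_{\tilde y}$ modulo reindexing), each summand is the $\gamma$-pullback of the same function, so both $G_\lambda$ and $F_\lambda$ descend to $M_e$; a pointwise bound on $F_\lambda$ becomes
\[
|F_\lambda(t,x)| \leq \sum_{\gamma\in\pi_1(M)} |(q-\Delta_g)a(2\lambda t,\gamma x)|.
\]

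The crucial input is a packing estimate for this sum. Since $\supp\phi\subset(0,\varepsilon_0)$ and $2\lambda t\leq T_0$ in the region of interest, $a(2\lambda t,\cdot)$ is supported in the geodesic ball $B_{\tilde g}(\tilde y, T_0)\subset \widetilde M_e$, and for each $x\in \mathcal F_e$ the number $N(x)$ of $\gamma\in\pi_1(M)$ with $\gamma x\in B_{\tilde g}(\tilde y,T_0)$ is bounded by the number of translates of $\mathcal F_e$ packed inside $B_{\tilde g}(\tilde y, T_0+\operatorname{diam}(\mathcal F_e))$. By the definition \eqref{volentropy} of the volume entropy, this is at most $C_\epsilon e^{(h+\epsilon)T_0}$ for any $\epsilon>0$. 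Applying Cauchy--Schwarz $|\sum_\gamma f_\gamma|^2\leq N(x)\sum_\gamma |f_\gamma|^2$ and unfolding the sum $\int_{\mathcal F_e}\sum_\gamma \cdot = \int_{\widetilde M_e}\cdot$ gives
\[
\|F_\lambda(t,\cdot)\|_{L^2(M)}\leq C_\epsilon e^{(h+\epsilon)T_0/2}\|(q-\Delta_g)a(2\lambda t,\cdot)\|_{L^2(\widetilde M_e)}.
\]
Integrating in $t$ with the substitution $s=2\lambda t$ and using $a(s,\cdot)\equiv 0$ for $s\geq T_0$ yields $\|F_\lambda\|_{L^1([0,T];L^2(M))}\leq C_\epsilon\lambda^{-1}e^{(h+\epsilon)T_0/2}\|a\|_*$, and an analogous computation, where $\partial_t$ produces a factor $\lambda^2$ from the oscillatory phase $e^{-i\lambda^2 t}$ that becomes a net factor $\lambda$ after the $s$-substitution, gives $\|\partial_t F_\lambda\|_{L^1([0,T];L^2(M))}\leq C_\epsilon\lambda e^{(h+\epsilon)T_0/2}\|a\|_*$.

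Substituting these into Lemma \ref{l:solution_homogeneous} gives the $k=0$ estimate directly via \eqref{eq3.5DDSF} and, after the optimal choice $\eta=\lambda^{-1}$ in \eqref{boundH1DDSF}, the $k=1$ estimate; the half-exponent $(h+\epsilon)/2$ and any polynomial $T_0^{1/2}$ factor are absorbed into $e^{(h+\epsilon)T_0}$ by enlarging $\epsilon$, producing exactly \eqref{e:estimate_remainder_term}. The final-condition version follows by time reversal after replacing $\phi(t-r)$ with $\phi(T_0-t+r)$. The main obstacle in this scheme is the packing/counting step: one must confirm that the proper discontinuity of the $\pi_1(M)$-action combined with the Bishop--Gromov-type control implicit in \eqref{volentropy} is genuinely responsible for the $e^{(h+\epsilon)T_0}$ blow-up, and that no additional $\lambda$-growth sneaks in through the WKB reduction on the cover — otherwise the later balancing of $T_0$ against $\|\Lambda^S_{g,q_1}-\Lambda^S_{g,q_2}\|_*$ would not close.
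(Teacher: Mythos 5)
Your proof is correct and follows essentially the same route as the paper's: define $F_\lambda=-P G_\lambda$, cancel the $\lambda^2$ and $\lambda^1$ orders via the eikonal and transport equations on the universal cover, count $\pi_1$-translates against the volume entropy to control the locally finite sum, and feed the resulting $L^1_tL^2_x$ bounds on $F_\lambda$ and $\partial_t F_\lambda$ into Lemma~\ref{l:solution_homogeneous} with the choice $\eta=\lambda^{-1}$. The only cosmetic difference is that you bound the $\gamma$-sum by Cauchy--Schwarz (yielding $N^{1/2}\sim e^{(h+\epsilon)T_0/2}$), whereas the paper uses the triangle inequality directly (yielding $N\sim e^{(h+\epsilon)T_0}$); both are absorbed into $e^{(h+\epsilon)T_0}$ after enlarging $\epsilon$, so the statement is reached either way.
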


\begin{proof}
We prove the Lemma with initial condition $u(0,\cdot) = 0$, the proof for $u(T,\cdot) = 0$ being analogous. As in \cite[Proof of Lemma 4.1]{Bellassoued10}, we consider for $x\in M_e$
\[
k(t,x) = - \sum_{\gamma \in \pi_1(M)} (i \partial_t - \Delta_{\tilde{g}} + \tilde{q}) \left( a(2\lambda t, \gamma(\tilde{x})) e^{i \lambda (\psi_{\tilde{y}}(\gamma(\tilde{x})) - \lambda t)} \right),
\]
where $\tilde{x}\in \widetilde{M}_e$ is a lift of $x$.
Let $v_\lambda$ be the solution, given by Lemma \ref{l:solution_homogeneous}, to the homogeneous boundary value problem
\[
\left \lbrace \begin{array}{ll}
(i \partial_t - \Delta_g + q) v_\lambda(t,x) = k(t,x) & \text{in } (0,T) \times M, \\
v_\lambda(0,x) =0, & \text{in } M, \\
v_\lambda(t,x) = 0 & \text{on } (0,T) \times \partial M.
\end{array} \right.
\]
We shall show that $v_\lambda$ satisfies the estimate \eqref{e:estimate_remainder_term}. A computation gives
\begin{align*}
-k(t,x) & = \sum_{\gamma \in \pi_1(M)} e^{i \lambda( \psi_{\tilde{y}}(\gamma(\tilde{x})) - \lambda t)} (-\Delta_{\tilde{g}} + \tilde{q}(\gamma(\tilde{x})))  a(2 \lambda t,\gamma(\tilde{x})) \\
 & \quad + 2i \lambda \sum_{\gamma \in \pi_1(M)}   e^{i \lambda(\psi_{\tilde{y}}(\gamma(\tilde{x})) - \lambda t)}  \left( \partial_t  \tilde{a} +  {\rm d}a(\nabla^{\tilde{g}}\psi_{\tilde{y}}) - \frac{a}{2} \Delta_{\tilde{g}} \psi_{\tilde{y}} \right)(2\lambda t,\gamma(\tilde{x})) \\
 & \quad + \lambda^2 \sum_{\gamma \in \pi_1(M)} a(2 \lambda t,\gamma(\tilde{x}))e^{i \lambda(\psi_{\tilde{y}}( \gamma(\tilde{x}))- \lambda t)} \left( 1 - |\nabla^{\tilde{g}}\psi_{\tilde{y}}|_{\tilde{g}}^2\right).
\end{align*}
Using that $a$ solves the transport equation, that $\psi_{\widetilde{y}}$ solves the eikonal equation and that $\Delta_{\tilde{g}}$ commute with $\gamma^*$ (since $\gamma$ are isometries of $\tilde{g}$), we obtain
\begin{align*}
-k(t,x) & = \sum_{\gamma \in \pi_1(M)} e^{i \lambda( \psi_{\tilde{y}}(\gamma(\tilde{x})) - \lambda t)}   (-\Delta_{\tilde{g}}a + \tilde{q}a)(2 \lambda t,\gamma(\tilde{x}))  \\
 & =: \sum_{\gamma \in \pi_1(M)} e^{i \lambda( \psi_{\tilde{y}}(\gamma(\tilde{x})) - \lambda t)} k_0(2\lambda t, \gamma(\tilde{x})).
\end{align*}
Notice that $k_0 \in H^1_0([0,T];L^2(M))$ for $\lambda \geq T_0/2T$ and $k_0(s,\cdot)|_{\widetilde{M}}=0$ for $s>T_0$. Then, using Lemma \ref{l:solution_homogeneous} we get
$$
v_\lambda \in \mathcal{C}^1([0,T];L^2(M)) \cap \mathcal{C}([0,T]; H^2(M) \cap H_0^1(M)).
$$
Moreover, using \eqref{eq3.5DDSF}, there exists $C>0$ depending on $(M,g),T>0$, and $\|q\|_{L^\infty}$ such that
\begin{align*}
\Vert v_\lambda(t,\cdot) \Vert_{L^2(M)} & \leq C \sum_{\gamma \in \pi_1(M)} \int_0^T \Vert k_0(2 \lambda t, \cdot) \Vert_{L^2( \gamma(F))} {\rm d}t \leq \frac{C}{\lambda} \sum_{\gamma \in \pi_1(M)} \int_0^{T_0} \Vert k_0(s, \cdot) \Vert_{L^2(\gamma(\mc{F}))} {\rm d}s \\
 & \leq \frac{C(1+\|q\|_{L^\infty}) T_0^{1/2}N(T_0) }{\lambda} \Vert a \Vert_*,
\end{align*}
where $N(T_0)$ denotes the number of fundamental domains which intersects the geodesic ball $B(\tilde{y},T_0)$ of center $\tilde{y}$ and radius $T_0$ in $\tilde{M}$, that is:
\[\begin{split}
N(T_0) = & \# \left \{ \gamma \in \pi_1(M) \, | \, \exists x \in \mc{F}, \, d_{\tilde{g}}(\gamma(x), \tilde{y}) \leq T_0 \right \}\\
\leq & \# \{ \gamma \in \pi_1(M) \, | \, \max_{x\in \mc{F}} d_{\tilde{g}}(\gamma(x), \tilde{y}) \leq T_0+{\rm Diam}(M) \}.
\end{split}
\]
Clearly, $N(T_0) \operatorname{Vol}(M) \leq \operatorname{Vol}(B(\widetilde{y},T_0+{\rm Diam}(M)))$, and therefore for each $\epsilon>0$, there is $C>0$ (depending on  ${\rm Diam}(M)$ and ${\rm Vol}(M)$) such that for all $T_0>0$ large enough
\[
N(T_0) \leq  C e^{(h+\epsilon) T_0} ,
\]
where $h = h(M,g)$ is the volume entropy of $(M,g)$ defined in \eqref{volentropy}.
We notice that the constants $C>0$ above can be chosen independently of $y$ and that $h$ is in fact  not depending on $y$.

Finally, by Lemma \ref{l:solution_homogeneous}, there is $C>0$ (depending on $(M,g,T,\|q\|_{L^\infty}$)) such that for each $\eta>0$
\begin{align*}
\Vert \nabla^g v_\lambda(t,\cdot) \Vert_{L^2(M)} & \leq C \eta \sum_{\gamma \in \pi_1(M)} \int_0^T \left( \lambda^2 \Vert k_0(2\lambda t, \cdot) \Vert_{L^2(\gamma(\mc{F}))} + \lambda \Vert \partial_t k_0(2\lambda t,\cdot) \Vert_{L^2(\gamma(\mc{F}))} \right) {\rm d}t \\
 & \quad + C\eta^{-1}\sum_{\gamma \in \pi_1(M)}  \int_0^T \Vert k_0(2 \lambda t, \cdot) \Vert_{L^2(\gamma(\mc{F}))} {\rm d}t.
\end{align*}
Choosing $\eta = \lambda^{-1}$, for each $\epsilon>0$ there is $C>0$ such that for all $\lambda\geq T_0/2T$
\begin{align*}
\Vert \nabla^g v_\lambda(t, \cdot) \Vert_{L^2(M)} & \leq C \sum_{\gamma \in \pi_1(M)} \left( \int_0^{T_0} \Vert k_0(s,\cdot) \Vert_{L^2(\gamma(\mc{F}))} {\rm d}s + \int_0^{T_0}\Vert \partial_t k_0(s, \cdot) \Vert_{L^2(\gamma(\mc{F}))} {\rm d}s \right) \\
 & \leq Ce^{(h+\epsilon) T_0} \Vert a \Vert_*.\qedhere
\end{align*}
\end{proof}

\subsection{Geometrical optics for the wave equation}
\label{s:geometric_optics_wave_equation}

In this section we give the construction of geometric optics solutions for the wave equation with hyperbolic trapped set. First we use  the following 
\begin{lemma}
\label{l:inhomogeneous_wave_lemma}
Let $q \in L^\infty(M)$.  There are constants $C>0$ depending only on $(M,g,\|q\|_{L^\infty})$ and $\nu_0\geq 0$ depending only on $\|q\|_{L^\infty}$, such that for all $0<t\leq T$, and all $F \in L^2((0,T) \times M)$, there is a unique solution $v$ to 
\begin{equation}\label{waveback}
\left \lbrace \begin{array}{ll}
(  \partial^2_t +\Delta_g + q(x)) v(t,x) = F(t,x) & \text{in } [0,T] \times M, \\
v(0,x) = 0, \quad \partial_t v(0,x) = 0 & \text{in } M, \\
v(t,x) = 0 & \text{on } [0,T] \times \partial M,
\end{array} \right.
\end{equation}
in $\mathcal{C}^1([0,T];L^2(M)) \cap \mathcal{C}^0([0,T]; H_0^1(M))$ satisfying for each $\nu \geq \nu_0$
\begin{equation}
\label{eq:boundwave}
\begin{gathered}
\Vert  v(t,\cdot) \Vert^2_{L^2(M)}+\Vert \partial_t v(t,\cdot) \Vert^2_{L^2(M)} + \Vert \nabla^g v(t,\cdot) \Vert^2_{L^2( M)} \leq C \int_0^t e^{\nu (t-s)} \Vert F(s,\cdot) \Vert^2_{L^2(M)} {\rm d}s, \\
\|e^{-\frac{\nu}{2} t}\partial_{{\rm n}} v\|_{L^2((0,T)\times \pl M)}\leq C \|e^{-\frac{\nu }{2}t}F\|_{L^2((0,T)\times M)},
\end{gathered}
\end{equation}
where $C$ depends only on $\Vert q \Vert_{L^\infty}$ and $\nu$. 

There is $C>0$ as above such that for each $f\in H^1([0,T]\times \pl M)$ with $f(0,\cdot)=\pl_tf(0,\cdot)=0$, there is a unique solution $u \in \mathcal{C}^1([0,T];L^2(M)) \cap \mathcal{C}^0([0,T]; H_0^1(M))$ such that 
\begin{equation}\label{solutionbvpDN}
\left \lbrace \begin{array}{ll}
(  \partial^2_t +\Delta_g + q(x)) u(t,x) = 0 & \text{in } [0,T] \times M, \\
u(0,x) = 0, \quad \partial_t u(0,x) = 0 & \text{in } M, \\
u(t,x) = f(t,x) & \text{on } [0,T] \times \partial M,
\end{array} \right.
\end{equation}
and $u$ satisfies,
\[  \|e^{-\nu t/2}\partial_{{\rm n}} u\|_{L^2([0,T]\times \pl M)}\leq C \|e^{-\nu t/2}f\|_{H^1([0,T]\times \pl M)}.\]
As a consequence, the operator $\Lambda^W_q: e^{\nu t/2}H_0^1(\R_+\times \pl M)\to e^{\nu t/2}L^2(\R_+\times \pl M)$ is bounded with norm depending only on $(M,g)$, 
$\Vert q \Vert_{L^\infty}$ and $\nu$.
\end{lemma}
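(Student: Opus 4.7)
The lemma has three ingredients: (i) existence, uniqueness and the two energy estimates \eqref{eq:boundwave} for the source problem \eqref{waveback}; (ii) solvability and $\partial_{\rm n}$-bound for the boundary-value problem \eqref{solutionbvpDN}; and (iii) the weighted boundedness of $\Lambda^W_{g,q}$ on $\R_+$. I would address them in this order, since (ii) reduces to (i) by a lifting, and (iii) then follows at once.

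For (i), existence and uniqueness of $v$ in the stated class is classical, obtained by a Galerkin scheme in the Dirichlet Laplacian eigenbasis, or equivalently by semigroup theory for the generator $\begin{pmatrix} 0 & I \\ -\Delta_g - q & 0\end{pmatrix}$ on $H_0^1(M)\times L^2(M)$, treating $q$ as a bounded perturbation. For the first estimate I define $\mathcal{E}(t):=\|\partial_t v(t)\|_{L^2}^2+\|\nabla^g v(t)\|_{L^2}^2+\|v(t)\|_{L^2}^2$, differentiate, and use the equation together with integration by parts (boundary terms vanish since $v|_{\partial M}=0$) to get $\mathcal{E}'(t)\le \nu_0\mathcal{E}(t)+\|F(t)\|_{L^2}^2$ with $\nu_0=C(1+\|q\|_{L^\infty})$; Gronwall gives the first line of \eqref{eq:boundwave} for any $\nu\ge\nu_0$. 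The trace estimate is the classical \emph{hidden regularity} argument: pick a smooth vector field $h$ on $\overline M$ with $h|_{\partial M}={\rm n}$, pair the equation with $2e^{-\nu t}(h\cdot\nabla v)$ and integrate by parts over $[0,T]\times M$. Using $\nabla v|_{\partial M}=(\partial_{\rm n}v){\rm n}$ produces a boundary integral $\int_0^T\!\!\int_{\partial M}e^{-\nu t}|\partial_{\rm n}v|^2\,d\sigma\,dt$; all remaining terms are controlled by $\int_0^T e^{-\nu t}\mathcal{E}(t)\,dt$ and by $\|e^{-\nu t/2}F\|_{L^2}^2$. Taking $\nu>\nu_0$ strictly and using the first estimate to bound $\int_0^T e^{-\nu t}\mathcal{E}(t)\,dt\le (\nu-\nu_0)^{-1}\|e^{-\nu t/2}F\|_{L^2}^2$ yields the second line of \eqref{eq:boundwave} with a constant independent of $T$.

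For (ii), given $f\in H^1([0,T]\times \partial M)$ with $f(0,\cdot)=\partial_t f(0,\cdot)=0$, I would construct a lift $\tilde f$ supported in a collar of $\partial M$ with $\tilde f|_{\partial M}=f$ and vanishing Cauchy data at $t=0$, obtained by extending $f$ via the normal projection and cutting off. Writing $u=\tilde f+w$, the correction $w$ solves a source problem with $F=-(\partial_t^2+\Delta_g+q)\tilde f$. The naive bound fails because $\partial_t^2\tilde f$ does not lie in $L^2$; I would instead regularise $f$ by $f_\varepsilon\in C^\infty$ with $f_\varepsilon\to f$ in $H^1([0,T]\times \partial M)$, apply (i) to obtain $u_\varepsilon$ together with the trace bound $\|e^{-\nu t/2}\partial_{\rm n} u_\varepsilon\|_{L^2}\le C\|e^{-\nu t/2}f_\varepsilon\|_{H^1}$ (where the constant depends only on $(M,g,\|q\|_{L^\infty},\nu)$), and pass to the limit. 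Existence and regularity of the limit $u$ in the stated class is also provided by \cite[Thm 6.10 and Thm. 7.1]{Chazarain82}, which is what the paper cites. The trace bound then combines the estimate from (i) applied to $w$ with a direct calculation of $\partial_{\rm n}\tilde f$ on $\partial M$ in terms of $f$.

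For (iii), the final boundedness of $\Lambda^W_{g,q}: e^{\nu t/2}H^1_0(\R_+\times \partial M)\to e^{\nu t/2}L^2(\R_+\times \partial M)$ is immediate from (ii): the constant is independent of $T$, so letting $T\to\infty$ gives the claim. \textbf{Main obstacle.} The most delicate step is the lifting in (ii), since the standard extension of $H^1$ boundary data does not produce an $L^2$ source $\partial_t^2\tilde f$; the passage-to-the-limit via regularisation, or equivalently the appeal to the Chazarain--Piriou theory, is the real substance. The hidden-regularity identity in (i), while classical, also needs the choice $\nu>\nu_0$ strictly so that the weighted energy is integrable in $t$, giving constants uniform in $T$ as required.
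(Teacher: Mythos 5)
Your treatment of part (i) is essentially the paper's argument, only differing in presentation: the paper conjugates by $e^{-\nu t/2}$ to reduce to a damped wave equation and runs the two energy identities (multiplier $\bar v_\nu$ and multiplier $\partial_t \bar v_\nu$) together with the hidden-regularity multiplier $d\bar v_\nu(N)$ on the conjugated function, whereas you keep the weight $e^{-\nu t}$ explicitly in the integrals. These are equivalent and both establish \eqref{eq:boundwave}.

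Part (ii) contains a genuine gap. You correctly identify the obstacle — the lift $\tilde f$ of an $H^1$ boundary datum has $\partial_t^2\tilde f\notin L^2$ — but the proposed remedy by regularisation does not close the gap, because the whole point of the regularisation step is to pass to the limit in a bound that is \emph{uniform in $\varepsilon$}, and the lifting argument cannot produce such a uniform bound. For smooth $f_\varepsilon$, the lifting gives $\|e^{-\nu t/2}\partial_{\rm n}u_\varepsilon\|_{L^2}\le C\|(\partial_t^2+\Delta_g+q)\tilde f_\varepsilon\|_{L^2}$, which involves $\|f_\varepsilon\|_{H^2}$ and blows up as $\varepsilon\to 0$ when $f$ is only $H^1$. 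You assert the trace bound $\|e^{-\nu t/2}\partial_{\rm n}u_\varepsilon\|_{L^2}\le C\|e^{-\nu t/2}f_\varepsilon\|_{H^1}$ but provide no route to it; this is exactly the content that needs to be proved. The paper circumvents this by a completely different argument: it first proves an $L^2\to L^2$ bound for $f\mapsto u$ by \emph{duality} against the backward source problem (this uses only the trace estimate from part (i) applied to $v_{-\nu}$), then observes that $\partial_t u_\nu$ solves the same BVP with boundary data $\partial_t f_\nu$ and so is again bounded in $L^2$, then uses elliptic regularity on $\Delta_g u_\nu$ to control $\nabla^g u_\nu$ in $L^2$, and only after establishing this weighted energy bound with constant depending on $\|f_\nu\|_{H^1}$ does it run the hidden-regularity multiplier for $u_\nu$. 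The duality step is the essential ingredient missing from your proposal; without it (or a substitute) the $H^1\to L^2$ trace bound with constants uniform in $T$ is not reached.

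Part (iii) then follows exactly as you say, once (ii) is actually established.
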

\begin{proof} The uniqueness and existence is done in \cite[Chapter 3, Section 8 and 9]{Lions72-I} and based on energy estimates. Here we want a uniform estimate in time
involving the exponent $\nu_0$, in particular for what concerns its dependence in $q$. 
Let $v$ be a solution to \eqref{waveback}, then, for any $\nu\geq 0$, $v_\nu(t,x) = e^{-\nu t/2} v(t,x)$ satisfies the damped-wave equation
\begin{equation}
\label{damped_wave_back}
\left \lbrace \begin{array}{ll}
(  \partial^2_t +\Delta_g + \frac{\nu^2}{4}+ q(x) + \nu \partial_t) v_\nu(t,x) = F_\nu(t,x) & \text{in } [0,T] \times M, \\
v_\nu(0,x) = 0, \quad \partial_t v_\nu(0,x) = 0 & \text{in } M, \\
v_\nu(t,x) = 0 & \text{on } [0,T] \times \partial M,
\end{array} \right.
\end{equation}
where $F_\nu(t,x) = e^{-\nu t/2} F(t,x)$. Similarly, let $u$ be the solution to \eqref{solutionbvpDN}, then $u_\nu(t,x) = e^{-\nu t/2} u(t,x)$ solves the boundary-value problem:
\begin{equation}
\label{damped_bvpDN}
\left \lbrace \begin{array}{ll}
(  \partial^2_t +\Delta_g + \frac{\nu^2}{4}+ q(x) + \nu \partial_t) u_\nu(t,x) = 0 & \text{in } [0,T] \times M, \\
u_\nu(0,x) = 0, \quad \partial_t u_\nu(0,x) = 0 & \text{in } M, \\
u_\nu(t,x) = f_\nu(t,x) & \text{on } [0,T] \times \partial M,
\end{array} \right.
\end{equation}
where $f_\nu(t,x) = e^{-\nu/2 t} f(t,x)$. 
First, multiply equation \eqref{damped_wave_back} by $\bar{v}_\nu$ and integrate in $[0,t] \times M$ to get
\begin{align*}
-\int_0^t \int_M \vert \partial_s v_\nu \vert^2 {\rm dv}_g {\rm d}s + \int_0^t \int_M \vert \nabla^g v_\nu \vert^2 {\rm dv}_g {\rm d}s + \int_0^t \int_M (\frac{\nu^2}{4} + q) \vert v_\nu \vert^2 {\rm dv}_g {\rm d}s  & \\
& \hspace*{-8cm}  = - \int_M \partial_t v_\nu \overline{v}_\nu {\rm dv}_g  -  \nu \int_0^t \int_M \partial_s v_\nu \bar{v}_\nu {\rm dv}_g {\rm d}s+ \int_0^t \int_M F_\nu \bar{v}_\nu {\rm dv}_g {\rm d}s.
\end{align*}
Then:
\begin{align*}
& \int_0^t \int_M \vert \nabla^g v_\nu \vert^2 {\rm dv}_g {\rm d}s + \frac{\nu^2}{4} \int_0^t \int_M \vert v_\nu \vert^2 {\rm dv}_g {\rm d}s  \\
& \leq \frac{1}{2} \int_M \vert \partial_t v_\nu(t) \vert^2 {\rm dv}_g+\frac{1}{2} \int_M \vert v_\nu(t) \vert^2 {\rm dv}_g  + \frac{\nu}{2} \int_0^t \int_M ( \vert \partial_s v_\nu \vert^2 + \vert v_\nu \vert^2) {\rm dv}_g {\rm d}s \\
 & \quad  + (\Vert q \Vert_{L^\infty}+\frac{1}{2})  \int_0^t \int_M \vert v_\nu \vert^2 {\rm dv}_g {\rm d}s+\frac{1}{2} \int_0^t \int_M \vert F_\nu \vert^2 {\rm dv}_g {\rm d}s 
\end{align*}
and we get, for $C_\nu = \frac{\nu^2}{4} - \frac{\nu}{2} - \Vert q \Vert_{L^\infty}-\frac{1}{2}$,
\begin{equation}
\label{e:estimate_by_derivative}
\begin{array}{rl}
\displaystyle \int_0^t \Vert \nabla^g v_\nu(s) \Vert^2_{L^2(M)} {\rm d}s + C_\nu \int_0^t \Vert v_\nu(s) \Vert^2 {\rm d}s & \displaystyle  \leq \frac{1}{2}  \Vert \partial_t v_\nu(t) \Vert^2_{L^2(M)} + \frac{1}{2} \Vert v_\nu(t) \Vert^2_{L^2(M)} \\[0.3cm]   
 & \displaystyle \quad +  \frac{\nu}{2} \int_0^t  \Vert \partial_s v_\nu \Vert^2_{L^2(M)}+  \frac{1}{2} \int_0^t \Vert F_\nu(s) \Vert^2_{L^2(M)} .
 \end{array}
\end{equation}
We next multiply equation \eqref{damped_wave_back} by $\pl_t\bar{v}_\nu$,  integrate in $[0,t] \times M$, take the real part and integrate by parts to obtain 
\[\begin{split}
& \frac{\nu^2}{4} \Vert v_\nu(t) \Vert^2_{L^2(M)} + \|\pl_t v_\nu(t)\|_{L^2(M)}^2+\|\nabla^gv_\nu(t)\|^2_{L^2(M)} \\
& \leq 2\int_0^t(\|F_\nu(s)\|_{L^2(M)} \Vert \partial_s v(s) \Vert_{L^2(M)}) {\rm d}s + \Vert q \Vert_{L^\infty} \int_0^t \|v_\nu(s)\|_{L^2(M)}^2 {\rm d}s \\
& \quad + (\Vert q \Vert_{L^\infty} - \nu) \int_0^t \Vert \partial_s v_\nu(s) \Vert_{L^2(M)}^2 {\rm d}s \\
 & \leq  \int_0^t\|F_\nu(s)\|_{L^2(M)}^2ds + \|q\|_{L^\infty}\int_0^t\|v_\nu(s)\|^2_{L^2(M)}{\rm d}s   + (1 + \Vert q \Vert_{L^\infty} -\nu) \int_0^t \|\pl_s v_\nu(s)\|^2_{L^2(M)}{\rm d}s.
\end{split}.\] 
Defining $\Phi_{v_\nu}(t) := \|\pl_tv_\nu(t)\|^2_{L^2}+\|v_\nu(t)\|^2_{L^2}+\|\nabla^gv_\nu(t)\|^2_{L^2}$, using \eqref{e:estimate_by_derivative} and taking $\nu>2$ large enough depending only on $\|q\|_{L^\infty}$, we obtain:
\[\begin{split}
\frac{1}{2}\Phi_{v_\nu}(t) \leq & \frac{3}{2}\int_0^t \Vert F_\nu(s, \cdot) \Vert_{L^2(M)}^2 {\rm d}s+(\|q\|_{L^\infty}-C_\nu) \int_0^t \|v_\nu(s)\|^2_{L^2(M)} {\rm d}s\\
& +(1+\|q\|_{L^\infty}-\nu)\int_0^t\|\pl_sv_\nu(s)\|^2_{L^2(M)}ds-\int_0^t\|\nabla^gv_\nu(s)\|^2_{L^2(M)}{\rm d}s. 
\end{split}\]
Take $\nu$ large enough so that $C_\nu-\|q\|_{L^\infty}>1$ and $\nu>2+\|q\|_{L^\infty}$, we then get
\begin{equation}\label{weightedbound} 
\Phi_{v_\nu}(t)+\int_0^t\Phi_{v_\nu}(s)ds \leq 3\int_0^t \Vert F_\nu(s) \Vert_{L^2(M)}^2 {\rm d}s. 
\end{equation}
Since $\Phi_{v_\nu}(t)\geq \frac{2}{\nu^2+1}e^{-\nu t}\|\pl_tv\|^2_{L^2(M)}+\frac{1}{2}e^{-\nu t}\|v(t)\|^2_{L^2(M)}$, this shows in particular the first estimate of \eqref{eq:boundwave}. 
Next, let $N$  be a smooth vector field equal to ${\rm n}$ the inward normal vector field on $\pl M$. 
Multiplying equation \eqref{damped_wave_back} by $\cjg N, \nabla^g \bar{v}\cjd= {\rm d}\bar{v}_\nu(N)$, one has:
\[\begin{split} 
& \int_0^t \int_M F_\nu(s,\cdot) {\rm d}\bar{v}(N) {\rm dv}_g{\rm d}s\\
& =\int_0^t \int_M\pl_s^2v_\nu  {\rm d}\bar{v}_\nu (N) {\rm dv}_g{\rm d}s+ \int_0^t \int_M\Delta_gv_\nu  {\rm d}\bar{v}_\nu(N) {\rm dv}_g{\rm d}s+\int_0^t \int_M (\frac{\nu^2}{4} + q) v_\nu  {\rm d}\bar{v}_\nu (N) {\rm dv}_g {\rm d}s \\
& \quad + \nu \int_0^t \int_M \partial_s v_\nu  {\rm d} \bar{v}_\nu(N) {\rm dv}_g {\rm d}s \\
& = I_1+I_2+I_3 + I_4.
\end{split}\]
Using integration by parts in $s$ 
\[ \begin{split}
2{\rm Re}(I_1 + I_4)=& 2{\rm Re}(\cjg \pl_tv_\nu , {\rm d}\bar{v}_\nu (N)\cjd_{L^2(M)})-\int_0^t\cjg N,(\nabla^g|\pl_sv_\nu|_g^2)\cjd_{L^2(M)} {\rm d}s\\
& + 2 \nu {\rm Re}\Big(\int_0^t \cjg \pl_sv_\nu, {\rm d}v_\nu(N)\cjd_{L^2(M)}{\rm d}s\Big)
\end{split}\]
which gives, after integrating by parts in $x\in M$ the second term, using $\pl_sv=0$ on $(0,t)\times \pl M$ and the estimates \eqref{weightedbound} (for some $C_{\nu,\|q\|_{L^\infty}}>0$ depending only on $(M,g),\nu,\|q\|_{L^\infty}$)
\[ |2{\rm Re}(I_1 + I_4)|\leq C_{\nu,\|q\|_{L^\infty}} \|F_\nu\|_{L^2( (0,t) \times M)}^2.\]
For the $I_2$ term, one gets by integration by parts
\[
2{\rm Re}(I_2)=2\int_0^t \int_{\partial M} |\partial_{{\rm n}} v_\nu|^2 {\rm dv}_{\partial M} {\rm d}s+2{\rm Re}\int_0^t \cjg \nabla^gv_\nu,\nabla^g ( {\rm d}v_\nu(N))\cjd_{L^2(M)}{\rm d}s\]
and  an easy computation shows
\begin{align*}
&\cjg \nabla^gv_\nu,\nabla^g ( {\rm d}v_\nu(N))\cjd_{L^2(M)} \\
& = \int_M \left( ( \nabla^gN \nabla^g v_\nu,\nabla^g\bar{v}_\nu)
+ \frac{1}{2} {\rm div}(|\nabla^gv_\nu|^2N)-  \frac{1}{2} |\nabla^gv_\nu|^2{\rm div}(N) \right){\rm dv}_g\\
 &  =\int_M\left( (\nabla^gN \nabla^g v_\nu,\nabla^g\bar{v}_\nu)
-\frac{1}{2} |\nabla^gv|^2{\rm div}(N) \right){\rm dv}_g-\frac{1}{2} \int_{\pl M}|\nabla^g v_\nu|^2{\rm dv}_{\pl M}
\end{align*}
thus, using $|\nabla^g v_\nu|^2=|\partial_{{\rm n}}v_\nu|^2$ (since $v_\nu=0$ on $\pl M$), we obtain, by \eqref{weightedbound},
\[ \Big|2{\rm Re}(I_2)-\int_0^t \int_{\partial M} |\partial_{{\rm n}} v_\nu|^2 {\rm dv}_{\partial M} {\rm d}s\Big|\leq C_{\nu,\|q\|_{L^\infty}} \| F_\nu\|_{L^2((0,t), L^2(M))}^2.\]
Finally, \eqref{weightedbound} directly gives $|I_3|\leq C_{\nu,\|q\|_{L^\infty}}\|F_\nu\|_{L^2((0,t)\times M)}^2$ and we conclude that 
\begin{equation}
\label{e:bound_DN_map_from_F}
\int_0^t  \int_{\partial M} |\partial_{{\rm n}} v_\nu|^2 {\rm dv}_{\partial M} {\rm d}s \leq C_{\nu,\|q\|_{L^\infty}}\| F_\nu\|_{L^2((0,t), L^2(M))}^2.
\end{equation}

The same results apply for solutions of the equation \eqref{damped_wave_back} on $[0,T]\times M$  with $\nu$ replaced by $-\nu$ and with boundary condition $v_{-\nu}(T)=\pl_tv_{-\nu}(T)=0$. 

Notice that if $u_\nu$ solves \eqref{damped_bvpDN} then for any $F_{-\nu}\in L^2((0,T) \times M)$, one has, by duality with \eqref{damped_wave_back},
\[
\ell(F_{-\nu}):= \int_0^T \int_M u_\nu(t,x) F_{-\nu}(t,x) {\rm dv}_g {\rm d}t = \int_0^T  \int_{\partial M}  f_\nu(t,x) \partial_{{\rm n}} v_{-\nu}(t,x) {\rm dv}_{\partial M} {\rm d}t,
\]
where $v_{-\nu}$ is the solution to \eqref{damped_wave_back} with $\nu$ replaced by $-\nu$ and with boundary condition $v_{-\nu}(T)=\pl_tv_{-\nu}(T)=0$ rather than $v_\nu(0)=\pl_tv_\nu(0)=0$. By \eqref{e:bound_DN_map_from_F}, there is $C>0$ depending only on $(M,g)$ so that
\[
\vert \ell(F_{-\nu}) \vert \leq C\Vert f_\nu \Vert_{ L^2((0,T) \times \partial M)} \Vert F_{-\nu} \Vert_{L^2((0,T) \times M)}.
\]
Therefore 
\[
\Vert u_{\nu} \Vert_{L^2((0,T)\times M))} \leq C\Vert f_\nu \Vert_{L^2((0,T) \times \partial M)}.
\]
Moreover, if $f_\nu \in H^1((0,T) \times M)$, then $w_\nu = \partial_t u_\nu$ solves the equation
\[
\left \lbrace \begin{array}{ll}
(\partial_t^2 + \Delta_g + \frac{\nu^2}{4} q + \nu \partial_t) w_\nu(t,x) =0, & \quad \text{in } [0,T] \times M \\
w_\nu(0,x) = \partial_t w_\nu(0,x) = 0, & \quad \text{in } M \\
w_\nu(x,t) = \partial_t f_\nu(t,x), & \quad \text{on } \partial M. 
\end{array} \right.
\]
This implies that $w_\nu = \partial_t u_\nu \in L^2((0,T) \times M)$ by the uniqueness result \cite[Chapter 3, Section 8 and 9]{Lions72-I} and with bound $\|w_{\nu}\|_{L^2((0,T)\times M))} \leq C \| \pl_tf_\nu\|_{L^2((0,T) \times \partial M)}$. Note that $\pl_tw_\nu=\pl_t^2u_\nu=-(\Delta_g + \frac{\nu^2}{4}+ q + \nu \partial_t) u_\nu\in L^2((0,T);H^{-2}(M))$ and 
\[\|\pl_tw_\nu\|_{L^2((0,T),H^{-2}(M))}\leq C_{\nu,\|q\|_{L^\infty}}\|f_\nu\|_{H^1([0,T]\times \pl M)}\] 
for some constant $C_{\nu,\|q\|_{L^\infty}}$ depending only on $\|q\|_{L^\infty}, \nu$ and $(M,g)$. 
On the other hand,  $\partial_t^2 w_\nu = -(\Delta_g + \frac{\nu^2}{4} + q  +  \nu \partial_t) w_\nu\in L^2((0,T);H^{-2}(M))$ with norm
\[\|\pl_tw_\nu\|_{L^2((0,T),H^{-2}(M))}\leq C_{\nu,\|q\|_{L^\infty}}\|f_\nu\|_{H^1([0,T]\times \pl M)}\] 
for some constant as above, thus by interpolation with $w_\nu \in L^2((0,T);L^2(M))$, we also have $\partial_t w_\nu \in L^2((0,T); H^{-1}(M))$ (\cite[Proposition 2.2]{Lions72-I}) with bound 
\[ \|\partial_t w_\nu \|_{L^2((0,T); H^{-1}(M))}\leq C_{\nu,\|q\|_{L^\infty}}\|f_\nu\|_{H^1([0,T]\times \pl M)}.\]
In particular, we obtain:
\[
\Vert u_\nu \Vert_{L^2((0,T) \times M)} + \Vert \partial_t u_\nu \Vert_{L^2((0,T) \times M)} + \Vert  \partial_t^2 u_\nu \Vert_{L^2((0,T); H^{-1}(M)} \leq C_{\nu,\|q\|_{L^\infty}}  \Vert f_\nu \Vert_{H^1((0,T) \times \partial M)}.
\]
We next observe that
\[
\left \lbrace \begin{array}{rl}
\Delta_g u_\nu = -\partial_t^2 u_\nu - (\frac{\nu_0^2}{4} + q) u_\nu - \nu \partial_t u_\nu =: \psi_\nu, & \text{in } M, \\
u_\nu = f_\nu, & \text{on } \partial M.
\end{array} \right.
\]
Then, by elliptic regularity and since $\partial_t^2 u_\nu, \partial_t u_\nu \in L^2((0,T); H^{-1}(M))$, there is $C>0$ depending only on $(M,g)$ so that
\[
\Vert u_\nu(t,\cdot) \Vert_{H^1(M)} \leq C \big( \Vert f_\nu(t,\cdot) \Vert_{H^1(\partial M)} + \Vert \psi_\nu(t,\cdot) \Vert_{H^{-1}(M)} \big).
\]
Therefore:
\begin{equation}
\label{e:energy_to_DN}
\Vert u_\nu \Vert_{L^2((0,T) \times M)} + \Vert \partial_t u_\nu \Vert_{L^2((0,T) \times M)} + \Vert  \nabla^g u_\nu \Vert_{L^2((0,T)\times M)} \leq C_{\nu,\|q\|_{L^\infty}} \Vert f_\nu \Vert_{H^1((0,T) \times \partial M)}.
\end{equation}

Next, multiplying equation \eqref{damped_bvpDN} by $\cjg N,\bar{u}_\nu\cjd=d\bar{u}(N)$ and applying the same reasoning as we did above for $v_\nu$ and equation \eqref{damped_wave_back}, it is direct to obtain the bound 
\begin{equation}
\int_0^T \int_{\partial M} |\partial_{{\rm n}} u_\nu(t)|^2 {\rm dv}_{\partial M} {\rm d}t \leq C_{\nu,\|q\|_{L^\infty}} \|f_\nu\|_{H^1((0,T)\times \partial M )}^2.
\end{equation}
This concludes the proof.
\end{proof}

Given $T > 0$, let $b \in H^2(\partial SM)$ with $b \vert_{\mathcal{T}_+^{\pl SM}(T)} = 0$. Let $a(t,\tilde{x})$ be a solution to the lifted transport equation defined by \eqref{e:solution_transport_equation}. We extend $a$ to $\R_+ \times \widetilde{M}$ by zero  and denote
\begin{align}
\label{e:*norms_wave}
\Vert a \Vert_* := \Vert a \Vert_{e^{\nu t} W^{1,1}(\R_+; H^2(\widetilde{M}))} + \Vert a \Vert_{e^{\nu t} W^{3,1}(\R_+; L^2(\widetilde{M}))}.
\end{align}
For $y\in \pl M_e$, we use the function $\psi_{\tilde{y}}=d_{\tilde{g}}(\tilde{y},\cdot)$ on $\widetilde{M}_e$ defined above and define, for any $\lambda > 0$,
\begin{equation}
\label{e:prescribed_part}
G_{\lambda}(t,x) := \sum_{\gamma \in \pi_1(M)} a(t, \gamma(x)) e^{i \lambda(\psi_{\tilde{y}}(\gamma(x)) - t)}.
\end{equation}
The sum is locally finite and this function is $\pi_1(M)$ invariant, and thus descends to $M_e$.
\begin{lemma}
\label{l:geometric_optics_w}
Let $q \in L^\infty(M)$ and $\nu_0\geq 0$ be defined in Lemma \ref{l:inhomogeneous_wave_lemma}. For any $\lambda > 0$ and $T>0$ the equation
\[
\left\{\begin{array}{ll}
(\partial_t^2 + \Delta_g + q(x)) u  = 0, & \quad \text{in } \R_+\times M, \\
 u(0, x) = \partial_t u(0,x)  = 0, & \quad x \in M,
\end{array}\right.
\] 
has a solution of the form
\[
u(t,x) = G_{\lambda}(t,x) + v_\lambda(t,x),
\]
such that, for every $\nu >\nu_0$,
\[
u \in e^{\nu t} H^1(\R_+; L^2(M)) \cap e^{\nu t} L^2(\R_+; H^1(M)),
\]
and where $v_\lambda(t,x)$ satisfies
\[
\begin{split}
v_\lambda(t,x) = 0,&  \quad \forall (t,x) \in \R_+ \times \partial M, \\
v_\lambda(0,x) = 0,&  \quad \partial_t v_\lambda(0,x) =0, \quad x \in M,
\end{split}
\]
and for all $\epsilon>0$, there is $C>0$ depending only on $M$, $g$, $\|q\|_{L^\infty}$, $\nu$, $\epsilon$ so that 
\begin{equation}
\label{e:estimates_wave_optics_lemma}
\lambda \Vert v_\lambda \Vert_{e^{\nu t}L^2(\R_+\times M)} + \Vert \partial_t v_\lambda \Vert_{e^{\nu t}L^2(\R_+\times M)} + \Vert \nabla^g v_\lambda \Vert_{e^{\nu t}L^2(\R_+ \times M)} \leq C e^{(h+\epsilon) T} \Vert a \Vert_*,
\end{equation}
where $h = h(M,g)$ denotes the volume entropy. The result remains valid if one replaces the initial condition $u(0,x) = \partial_t u(0,x) = 0$ by the final condition $u(t,x) = \partial_t u(t,x) = 0$ for $t \geq T$. In this case, $v_\lambda$ satisfies 
\[
v_\lambda(t,x) = 0, \quad \partial_t v_\lambda(t,x) =0, \quad t \geq T, \quad x \in M.
\]
\end{lemma}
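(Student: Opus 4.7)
The plan follows the Schrödinger proof of Lemma \ref{l:geometric_optics} with two modifications. First, the weighted-energy estimate of Lemma \ref{l:inhomogeneous_wave_lemma} replaces Lemma \ref{l:solution_homogeneous} to handle the wave equation on $\R_+$. Second, because the amplitude $a(t,x)$ in the wave case is no longer rescaled (unlike the Schrödinger amplitude $a(2\lambda t,x)$), we must recover the gain $\lambda^{-1}$ in front of $\|v_\lambda\|_{e^{\nu t}L^2}$ by an explicit integration by parts in $t$, which in turn explains the shape of the norm $\|a\|_*$ in \eqref{e:*norms_wave}.

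Setting $u=G_\lambda+v_\lambda$, we define $v_\lambda$ as the solution to the inhomogeneous wave equation with source $k:=-(\pl_t^2+\Delta_g+q)G_\lambda$, zero initial data and zero Dirichlet condition on $\pl M$. The vanishing of $G_\lambda$ on $\R_+\times\pl M$ is ensured, as in Lemma \ref{l:geometric_optics}, by $b|_{\mc{T}_+^{\pl SM}(T)}=0$, $\supp\phi\subset(0,\epsilon_0)$, and $T>1+{\rm Diam}(M_e)$. A direct computation on $\widetilde{M}$ using $|\nabla^{\tilde g}\psi_{\tilde y}|=1$ and the transport equation \eqref{transporteq} cancels the $\lambda^2$ and $\lambda^1$ contributions, yielding
\[
k(t,x)=-\sum_{\gamma\in\pi_1(M)}e^{i\lambda(\psi_{\tilde y}(\gamma\tilde x)-t)}\,k_0(t,\gamma\tilde x),\qquad k_0:=\pl_t^2 a+\Delta_{\tilde g}a+\tilde q\,a.
\]
The bounds on $\|\pl_tv_\lambda\|_{e^{\nu t}L^2}$ and $\|\nabla^g v_\lambda\|_{e^{\nu t}L^2}$ then follow directly from \eqref{eq:boundwave} (via a Fubini calculation, valid for $\nu>\nu_0$), combined with the volume-entropy counting bound $N(T)\leq Ce^{(h+\epsilon)T}$ on the number of $\gamma\in\pi_1(M)$ with $\gamma(\mathcal F)\cap B(\tilde y,T)\neq\emptyset$, exactly as in the proof of Lemma \ref{l:geometric_optics}.

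To obtain the missing factor of $\lambda^{-1}$ in front of $\|v_\lambda\|_{e^{\nu t}L^2}$, we use $e^{-i\lambda t}=\frac{-1}{i\lambda}\pl_te^{-i\lambda t}$ to write $k=\pl_tK_1+K_2$ with
\[
K_1:=\frac{-1}{i\lambda}\sum_\gamma e^{i\lambda(\psi_{\tilde y}(\gamma\tilde x)-t)}k_0(t,\gamma\tilde x),\qquad K_2:=\frac{1}{i\lambda}\sum_\gamma e^{i\lambda(\psi_{\tilde y}(\gamma\tilde x)-t)}\pl_tk_0(t,\gamma\tilde x),
\]
and split $v_\lambda=v_\lambda^{(1)}+v_\lambda^{(2)}$ correspondingly. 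Since $\supp\phi\subset(0,\epsilon_0)$ implies $k_0(0,\cdot)\equiv 0$, hence $K_1(0,\cdot)\equiv 0$, one checks that $v_\lambda^{(1)}=\pl_t\tilde v$ where $\tilde v$ solves the wave equation with source $K_1$ and zero initial/Dirichlet data: the compatibility condition $\pl_t^2\tilde v(0,\cdot)=K_1(0,\cdot)-(\Delta_g+q)\tilde v(0,\cdot)=0$ indeed makes $\pl_t\tilde v$ an admissible solution. Applying \eqref{eq:boundwave} to $\tilde v$ and to $v_\lambda^{(2)}$ yields $\|v_\lambda\|_{e^{\nu t}L^2}\leq C\lambda^{-1}(\|k_0\|_{e^{\nu t}L^2}+\|\pl_tk_0\|_{e^{\nu t}L^2})$ (times the counting factor), and the contributions from $\Delta_{\tilde g}a\subset k_0$ and $\pl_t^3 a\subset \pl_tk_0$ account respectively for the $e^{\nu t}W^{1,1}(\R_+;H^2(\widetilde M))$ and $e^{\nu t}W^{3,1}(\R_+;L^2(\widetilde M))$ terms in the definition \eqref{e:*norms_wave} of $\|a\|_*$.

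The final-time version ($u(t)=\pl_tu(t)=0$ for $t\geq T$) follows from the same construction after the substitution $t\mapsto T-t$, exploiting the time-reversal invariance of the wave operator. The principal technical delicacy is propagating zero initial and Dirichlet data through the splitting $v_\lambda=v_\lambda^{(1)}+v_\lambda^{(2)}$: the vanishing of $k_0$ at $t=0$ is precisely what allows us to realize $v_\lambda^{(1)}$ as the time-derivative of an admissible wave solution, so that Lemma \ref{l:inhomogeneous_wave_lemma} applies cleanly to each piece without introducing correction terms.
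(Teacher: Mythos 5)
Your proof is correct and takes essentially the same route as the paper's: the crucial gain of a factor $\lambda^{-1}$ in the $L^2$-bound on $v_\lambda$ comes from realizing (a piece of) $v_\lambda$ as a time derivative of a wave solution whose source carries the $\lambda^{-1}$, and then invoking the energy estimate \eqref{eq:boundwave} together with the fundamental-domain counting bound $N(T)\lesssim e^{(h+\epsilon)T}$. The paper organizes this by introducing the time primitive $w_\lambda=\int_0^t v_\lambda$, which solves the wave equation with source $k_1=\int_0^t k$, and then integrates by parts inside $k_1$; you instead split the source directly as $k=\partial_tK_1+K_2$ and identify the corresponding piece of $v_\lambda$ as $\partial_t\tilde v$ by the uniqueness of Lemma \ref{l:inhomogeneous_wave_lemma} together with the compatibility condition $K_1(0,\cdot)=0$. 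The two are related by $w_\lambda=\tilde v+\int_0^t v_\lambda^{(2)}$, and your version has the slight advantage of avoiding any discussion of the boundary term at $s=t$ that appears in the paper's ``for $t\ge T$'' integration by parts.
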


\begin{proof}
We prove the Lemma with initial condition $u(0,\cdot) = \partial_t u(0,\cdot) = 0$. The proof for $u(T,\cdot) = \partial_t u(T,\cdot) = 0$ is analogous. As in \cite[Proof of Lemma 4.1]{Bellassoued11}, we consider
$$
k(t,x) = - \sum_{\gamma \in \pi_1(M)} (\partial_t^2 + \Delta_{\tilde{g}} + \tilde{q}) \left( a(t, \gamma(x)) e^{i \lambda (\psi_{\tilde{y}}(\gamma(x)) - t)} \right).
$$
Let $v_\lambda$ be the solution, given by Lemma \ref{l:inhomogeneous_wave_lemma}, to the  boundary value problem
\begin{equation}
\label{e:BVP_optics_wave_lemma}
\left \lbrace \begin{array}{ll}
( \partial_t^2 + \Delta_g + q) v_\lambda(t,x) = k(t,x) & \text{in } \R_+ \times M, \\
v_\lambda(0,x) = \partial_t v_\lambda(0,x) = 0, & \text{in } M, \\
v_\lambda(t,x) = 0 & \text{on } \R_+ \times \partial M.
\end{array} \right.
\end{equation}
To prove the claim it is sufficient to show that $v_\lambda$ satisfies the estimate \eqref{e:estimates_wave_optics_lemma}. A computation yields 
\begin{align*}
-k(t,x) & = \sum_{\gamma \in \pi_1(M)} e^{i \lambda( \psi_{\tilde{y}}(\gamma(x)) -  t)} ((\partial_t^2 + \Delta_{\tilde{g}} + \tilde{q}) a)(t,\gamma(x)) \\
 & \quad -2i \lambda \sum_{\gamma \in \pi_1(M)}   e^{i \lambda(\psi_{\tilde{y}}(\gamma(x)) - t)}  \left( \partial_t  a +  {\rm d}a(\nabla^{\tilde{g}}\psi_{\tilde{y}}) - \frac{a}{2} \Delta_{\tilde{g}} \psi_{\tilde{y}} \right)( t,\gamma(x)) \\
 & \quad - \lambda^2 \sum_{\gamma \in \pi_1(M)} a( t,\gamma(x))e^{i \lambda(\psi_{\tilde{y}}( \gamma(x))- t)} ( 1 - |\nabla^{\tilde{g}}\psi_{\tilde{y}}|^2).
\end{align*}
Using that $a$ solves the transport equation and $\psi_{\tilde{y}}$ solves the eikonal equation, we obtain
\begin{align*}
-k(t,x) & = \sum_{\gamma \in \pi_1(M)} e^{i \lambda( \psi_{\tilde{y}}(\gamma(x)) - t)}   ((\partial_t^2 + \Delta_{\widetilde{g}} + \tilde{q})a)(t,\gamma(x))  \\
 & =: \sum_{\gamma \in \pi_1(M)} e^{i \lambda( \psi_{\tilde{y}}(\gamma(x)) -  t)} k_0(t, \gamma(x)).
\end{align*}
Notice that $k_0 \in H_0^1([0,T];L^2(\widetilde{M}))$. Extending $k_0$ to $I$ by zero, we obtain
\[
\Vert k_0 \Vert_{e^{\nu t}L^1(I ; L^2(\widetilde{M}))} + \Vert \partial_t k_0 \Vert_{e^{\nu t}L^1(I ; L^2(\widetilde{M}))} \leq C \Vert a \Vert_*.
\] 
Moreover, the function 
\[w_\lambda(t,x) := \int_0^t v_\lambda(s,x) {\rm d}s\]
solves the mixed hyperbolic problem \eqref{e:BVP_optics_wave_lemma} with right-hand side $k_1(t,x) = \int_0^t k(s,x){\rm d}s$. For $t\geq T$, this is equal to
\begin{align*}
k_1(t,x)  & = \frac{1}{i \lambda} \sum_{\gamma \in \pi_1(M)} \int_0^t k_0(s,\gamma(x)) \partial_s( e^{i \lambda (\psi(\gamma(x)) - s)}) {\rm d}s \\
 & = - \frac{1}{i \lambda} \sum_{\gamma \in \pi_1(M)} \int_0^t \partial_s k_0(s,\gamma(x))  e^{i \lambda (\widetilde{\psi}(\gamma(x)) - s)} {\rm d}s.
\end{align*}
Then, by Lemma \ref{l:inhomogeneous_wave_lemma}, we obtain for $\nu_0$ defined in Lemma \ref{l:inhomogeneous_wave_lemma}
\begin{align*}
\Vert v_\lambda(t,\cdot) \Vert_{L^2(M)} = \Vert \partial_t w_\lambda(t,\cdot) \Vert_{L^2(M)} & \leq C \int_0^t e^{\nu_0(t-s)} \Vert k_1(s,\cdot) \Vert_{L^2(M)} {\rm d}s \\
 & \leq \frac{C}{\lambda} \sum_{\gamma \in \pi_1(M)} \int_0^t e^{\nu_0(t-s)} \int_0^s \Vert \partial_r k_0(r,\cdot) \Vert_{L^2(\gamma(F))}  {\rm d}r {\rm d}s \\
 & \leq \frac{C N(T)}{\lambda} \int_0^t e^{\nu_0(t-s)} \int_0^s \Vert \partial_r k_0(r,\cdot) \Vert_{L^2(\widetilde{M})}\, {\rm d}r {\rm d}s.
\end{align*}
Therefore, using that $N(T) \leq C e^{(h+\epsilon)T}$ and the Minkowski integral inequality, we obtain for $\nu>\nu_0$
\begin{align*}
\Vert v_\lambda \Vert_{e^{\nu t}L^2(I \times M)} & \leq \frac{C e^{(h+\epsilon) T}}{\lambda} \left( \int_0^\infty \left( \int_0^t \int_0^s e^{- (\nu - \nu_0)t} e^{- \nu_0 s}  \Vert \partial_r k_0(r,\cdot) \Vert_{L^2(\widetilde{M})} {\rm d}r {\rm d}s \right)^2 {\rm d}t \right)^{\frac{1}{2}} \\
 & \leq \frac{C e^{(h+\epsilon) T}}{\lambda} \int_0^\infty \int_r^\infty \left( \int_s^\infty e^{- 2(\nu - \nu_0)(t-s)} e^{-2\nu s}  \Vert \partial_r k_0(r,\cdot) {\rm d}t \right)^{\frac{1}{2}} {\rm d}s {\rm d}r \\
 & \leq \frac{C e^{(h+\epsilon) T}}{\lambda} \int_0^\infty \int_r^\infty e^{- \nu (s-r)} e^{- \nu r} \Vert \partial_r k_0(r,\cdot) \Vert_{L^2(\widetilde{M})} {\rm d}s {\rm d}r \\
 & \leq \frac{C e^{(h+\epsilon) T}}{\lambda} \int_0^\infty e^{- \nu r} \Vert \partial_r k_0(r,\cdot) \Vert_{L^2(\widetilde{M})} {\rm d}r \\
 & \leq \frac{C e^{(h+\epsilon) T}}{\lambda} \Vert a \Vert_*
\end{align*}
for some constants $C$ independent of $(T,\lambda)$ (but depending on $\nu-\nu_0$).
On the other hand, using again Lemma \ref{l:inhomogeneous_wave_lemma} for $v_\lambda$, we obtain
\begin{align*}
\Vert \partial_t v_\lambda(t, \cdot) \Vert_{L^2(M)} + \Vert \nabla^g v_\lambda(t,\cdot) \Vert_{L^2(M)} & \leq C \sum_{\gamma \in \pi_1(M)} \int_0^t e^{\nu(t-s)} \Vert  k_0(s,\cdot) \Vert_{L^2(\gamma(F))} {\rm d}s \\
 & \leq C e^{(h+\epsilon) T} \int_0^t e^{\nu(t-s)} \Vert  k_0(s,\cdot) \Vert_{L^2(\widetilde{M})} {\rm d}s.
\end{align*}
Repeating the previous argument, we get:
\[
\Vert \partial_t v_\lambda  \Vert_{e^{\nu t} L^2(\R_+ \times M)} + \Vert \nabla^g v_\lambda(t,\cdot) \Vert_{e^{\nu t} L^2(\R_+ \times M)} \leq C e^{(h+\epsilon)T} \Vert a \Vert_*.\qedhere
\]
\end{proof}

\section{Stable determination of the eletrical potential for the Schrödinger equation}
In this section we prove Theorem \ref{t:potential_recovery_Schroedinger} about the stability estimate for the Schrödinger equation. The main idea relies in taking geometric optics solutions with initial data $b \in H^2(\partial_-SM_e)$ with $b \vert_{\mathcal{T}_+^{\pl SM}(T_0)} = 0$ where $T_0>0$ is taken large. These solutions are concentrating on geodesics with endpoints on the boundary and with length at most $T_0$; when $T_0\to \infty$, these geodesics will cover a set of full measure in $M$. Using these solutions, one can control the $L^2$ to norm of $\Pi_0^eq$ by the difference of the DN maps for $q_1$ and $q_2$, with a remainder term coming from $L^2$ estimates of the $X$-ray transform of $q$ and $\Pi_0^e q$ on $\mathcal{T}_+^{\pl SM}(T_0)$; here $\Pi_0^e={I_0^e}^*I_0^e$ is the normal operator associated to the X-ray transform on $M_e$ introduced in \eqref{normalop}. 
As we shall see, using the estimate \eqref{estimatenormalop}, this is enough to obtain our stability estimate since, due to hyperbolicity of the trapped set, the volume of $\mathcal{T}_+(T_0)$ decays exponentially as $T_0 \to \infty$ and this remainder term can be absorbed in the stability estimate for $T_0$ sufficiently large.

\subsection{Preliminary estimates}

We first reformulate Lemmas 5.1 and 5.2 from \cite{Bellassoued10} in our setting. Let $q = q_1 - q_2$ extended to $M_e$ by $q = 0$ in $M_e \setminus M$. Recall that $\Lambda^S_{g,q}$ is the Dirichlet-to-Neumann map associated with the Schrödinger equation \eqref{e:IVP}. 
\begin{lemma}
\label{l:lemma5.1}
Let $q_1,q_2\in W^{1,\infty}(M)$ with $q_1|_{\pl M}=q_2|_{\pl M}$ and set $q:=q_1-q_2$. There exists $C>0$ depending only on $(M,g,\|q\|_{W^{1,\infty}})$ and $C_0>0$ depending only on $(M,g)$ such that for any $T_0>0$, any $a_1,a_2 \in H^1(\R; H^2(\widetilde{M}))$ satisfying the transport equation \eqref{transporteq} on $\widetilde{M}_e$ (for the same solution $\psi_{\widetilde{y}}$ to the eikonal equation $ \vert \nabla_{\widetilde{g}} \psi_{\widetilde{y}} \vert = 1$) with condition $a_j|_{\pl_-S\widetilde{M}}=\pi^*b_j$ for $b_1, b_2 \in H^2(\partial_-SM)$ satisfying $b_1 \vert_{\mathcal{T}_+^{\pl SM}(T_0))} = b_2 \vert_{\mathcal{T}_+^{\pl SM}(T_0)} = 0$, the following estimate holds true:
\begin{align*}
\Big\vert \int_0^T \int_{\widetilde{M}} \tilde{q}(x)  a_1(2\lambda t, x) \overline{a_2(2\lambda t,x)} \operatorname{dv}_{\tilde{g}}(x) {\rm d}t \Big \vert  & \\
  & \hspace*{-3cm}  \leq C e^{C_0T_0} \big( \lambda^{-2} + \Vert \Lambda^S_{g,q_1} - \Lambda^S_{g,q_2} \Vert_* \big) \Vert a_1 \Vert_* \Vert a_2 \Vert_*,
\end{align*}
for any $ \lambda \geq \frac{T_0}{2T}$, where $\tilde{q}=\pi^*q\in W^{1,\infty}(\widetilde{M})$ is the lift of $q$.
\end{lemma}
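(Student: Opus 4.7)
The plan is to use the geometric optics solutions of Lemma~\ref{l:geometric_optics} to set up a Green-type identity exchanging the integral $\int q u_1\bar{u}_2$ for the DN-difference against Dirichlet traces, and then extract $\int\tilde{q}\,a_1\bar{a}_2$ as the diagonal $\gamma=e$ term in an expansion of the principal piece $\int q\,G_{\lambda,1}\bar{G}_{\lambda,2}$ over the deck group $\pi_1(M)$.

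First, I apply Lemma~\ref{l:geometric_optics} to build $u_1=G_{\lambda,1}+v_{\lambda,1}$ solving the Schr\"odinger equation with potential $q_1$ and $u_1(0,\cdot)=0$, and $u_2=G_{\lambda,2}+v_{\lambda,2}$ solving the equation with potential $q_2$ and $u_2(T,\cdot)=0$, with common phase $\psi_{\widetilde{y}}$, common frequency $\lambda\geq T_0/(2T)$, and amplitudes constructed from $b_1,b_2$. Since $v_{\lambda,j}$ vanishes on $(0,T)\times\pl M$, the traces are $f_j:=u_j|_{(0,T)\times \pl M}=G_{\lambda,j}|_{(0,T)\times \pl M}$. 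Letting $\widetilde{u}_1$ solve the $q_2$-equation with zero initial data and boundary data $f_1$, and setting $w:=u_1-\widetilde{u}_1$, we have $(i\pl_t-\Delta_g+q_2)w=q\,u_1$ with $w(0,\cdot)=0$ and $w|_{(0,T)\times\pl M}=0$. Multiplying by $\bar{u}_2$ and integrating by parts in $[0,T]\times M$ (the time boundary contributions vanish because $w(0,\cdot)=u_2(T,\cdot)=0$, and the space boundary term reduces to $\pl_{\rm n} w\cdot \bar{f}_2$ with $\pl_{\rm n} w=\pm(\Lambda^S_{g,q_1}-\Lambda^S_{g,q_2})f_1$) yields
\[\int_0^T\!\!\int_M q\,u_1\bar{u}_2\,\operatorname{dv}_g\,\mathrm{d}t = \mp\int_0^T\!\!\int_{\pl M}(\Lambda^S_{g,q_1}-\Lambda^S_{g,q_2})f_1\cdot\bar{f}_2\,\mathrm{d}\sigma\,\mathrm{d}t.\]

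Second, I bound this identity. The sum defining $G_{\lambda,j}(t,x)$ is locally finite with at most $N(T_0)\leq Ce^{(h+\epsilon)T_0}$ nonzero terms, since $a_j(2\lambda t,\gamma x)\neq 0$ requires $d_{\tilde{g}}(\widetilde{y},\gamma\tilde{x})\lesssim T_0$. Applying Cauchy--Schwarz on this sum together with the change of variable $s=2\lambda t$ and a lift to $\widetilde{M}$ gives $\|f_j\|_{L^2((0,T)\times\pl M)}\leq Ce^{(h+\epsilon)T_0/2}\lambda^{-1/2}\|a_j\|_*$, and since each derivative of $e^{i\lambda(\psi_{\widetilde{y}}-\lambda t)}$ produces a factor $\lambda$, also $\|f_1\|_{H^1((0,T)\times\pl M)}\leq Ce^{(h+\epsilon)T_0/2}\lambda^{1/2}\|a_1\|_*$, so the boundary term is bounded by $Ce^{(h+\epsilon)T_0}\|\Lambda^S_{g,q_1}-\Lambda^S_{g,q_2}\|_*\|a_1\|_*\|a_2\|_*$. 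The error terms coming from $u_1\bar{u}_2=G_{\lambda,1}\bar{G}_{\lambda,2}+G_{\lambda,1}\bar{v}_{\lambda,2}+v_{\lambda,1}\bar{G}_{\lambda,2}+v_{\lambda,1}\bar{v}_{\lambda,2}$ are estimated via $\|v_{\lambda,j}\|_{L^2}\leq Ce^{(h+\epsilon)T_0}\lambda^{-1}\|a_j\|_*$ from Lemma~\ref{l:geometric_optics}; for the cross terms $\int q\,G\bar{v}$ I exploit the oscillation $e^{-i\lambda^2 t}$ in $G_{\lambda,j}$ via one integration by parts in $t$ (boundary terms vanishing by $G_{\lambda,j}(0)=0$ and $v_{\lambda,2}(T)=0$), gaining an extra $\lambda^{-1}$ and yielding a total contribution of order $Ce^{C_0 T_0}\lambda^{-2}\|a_1\|_*\|a_2\|_*$.

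Third, to extract $\int\tilde{q}\,a_1\bar{a}_2$ from $\int q\,G_{\lambda,1}\bar{G}_{\lambda,2}$, I lift to the universal cover $\widetilde{M}_e$ using the $\pi_1(M)$-invariance of $\tilde{q}$. Expanding the double sum over $\gamma_1,\gamma_2\in\pi_1(M)$, substituting $\gamma:=\gamma_1\gamma_2^{-1}$ and changing variables by the isometric action, one obtains
\[\int_0^T\!\!\int_M q\,G_{\lambda,1}\bar{G}_{\lambda,2}\,\operatorname{dv}_g\,\mathrm{d}t = \sum_{\gamma\in\pi_1(M)}\int_0^T\!\!\int_{\widetilde{M}_e}\tilde{q}(y)\,a_1(2\lambda t,\gamma y)\,\bar{a}_2(2\lambda t,y)\,e^{i\lambda(\psi_{\widetilde{y}}(\gamma y)-\psi_{\widetilde{y}}(y))}\operatorname{dv}_{\tilde{g}}\mathrm{d}t,\]
whose $\gamma=e$ term is precisely the quantity to be bounded. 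For $\gamma\neq e$, the identity $\psi_{\widetilde{y}}(\gamma y)=d_{\tilde{g}}(\gamma^{-1}\widetilde{y},y)=\psi_{\gamma^{-1}\widetilde{y}}(y)$ shows that the phase gradient at $y$ is the difference of the outgoing unit tangent vectors of the unique geodesics from $\gamma^{-1}\widetilde{y}$ and from $\widetilde{y}$ to $y$; the absence of conjugate points together with $\gamma^{-1}\widetilde{y}\neq\widetilde{y}$ forces these directions to differ. Two non-stationary-phase integrations by parts in $y$ then give an $O(\lambda^{-2})$ bound per off-diagonal term, and since the integrand requires $d_{\tilde{g}}(\widetilde{y},\gamma^{-1}\widetilde{y})\lesssim T_0$, at most $N(T_0)^2\leq Ce^{2(h+\epsilon)T_0}$ off-diagonal contributions are nonzero, summing to $Ce^{C_0 T_0}\lambda^{-2}\|a_1\|_*\|a_2\|_*$.

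The main obstacle I anticipate is a uniform lower bound on $|\nabla(\psi_{\widetilde{y}}(\gamma y)-\psi_{\widetilde{y}}(y))|$ over the support of the integrand for $\gamma\neq e$, which is needed to make the non-stationary phase argument quantitative; this lower bound may deteriorate as tubes grow with $T_0$, but any polynomial-in-$T_0$ loss is absorbable into $e^{C_0 T_0}$, and higher derivatives of the amplitude and phase produced by the two integrations by parts are controlled by the norm $\|a_j\|_*$ together with the $W^{1,\infty}$ bound on $q$. Combining the three estimates above yields the stated inequality.
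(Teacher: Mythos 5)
Your overall strategy—geometric optics via Lemma~\ref{l:geometric_optics}, a Green identity trading $\int q\,u_1\bar u_2$ for the DN difference acting on boundary traces, and a decomposition of the principal part over $\pi_1(M)$ into a diagonal term (the target) and off-diagonal terms handled by non-stationary phase—is exactly the paper's approach, and your quantitative phase lower bound via the no-conjugate-points hypothesis and injectivity radius is also the right idea. However, the off-diagonal estimate as you've written it has a genuine gap. You claim two non-stationary-phase integrations by parts in the spatial variable to produce $O(\lambda^{-2})$; but two integrations by parts put two derivatives on the amplitude $\tilde q\,a_1\bar a_2\,\alpha^{1/2}$, and in particular on $\tilde q$, which is only $W^{1,\infty}$ (and the constant $C$ is only allowed to depend on $\|q\|_{W^{1,\infty}}$). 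The paper avoids this: it performs a \emph{single} spatial integration by parts (hence only one derivative of $\tilde q$ is needed, controlled by $\|q\|_{W^{1,\infty}}$) and obtains the second power of $\lambda^{-1}$ from the change of variables $s=2\lambda t$ in the time integral, exploiting that the amplitudes $a_j(2\lambda t,\cdot)$ are supported in the interval $2\lambda t\in(0,T_0+\varepsilon_0)$, i.e.\ in a $t$-window of length $\sim T_0/\lambda$. This is the mechanism that makes the $W^{1,\infty}$ hypothesis sufficient, and it is what your write-up is missing for the off-diagonal sum (you do use the $s=2\lambda t$ change for $\|f_j\|_{L^2}$, but not there).

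Two smaller points. First, your $H^1$-trace estimate $\|f_1\|_{H^1((0,T)\times\pl M)}\lesssim \lambda^{1/2}\|a_1\|_*$ relies on "each derivative of $e^{i\lambda(\psi_{\widetilde y}-\lambda t)}$ produces a factor $\lambda$"; but $\pl_t e^{i\lambda(\psi_{\widetilde y}-\lambda t)}=-i\lambda^2 e^{i\lambda(\psi_{\widetilde y}-\lambda t)}$, so the $t$-derivative contributes $\lambda^2$ and the correct order is $\lambda^{3/2}$ (the paper's own statement of this bound is also terse, but in any case the power you wrote is not justified by the reason you give). Second, the uniform lower bound on the phase gradient over the $T_0$-tube is \emph{exponentially} small in $T_0$ (of order $\delta e^{-C_0T_0}$, with $C_0$ controlled by the $C^1$ norm of the geodesic vector field), not merely polynomially small as you anticipate; this is still absorbable into $e^{C_0T_0}$, but it affects the constant $C_0$ in the final statement, so it is worth being precise about.
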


\begin{proof}

By Lemma \ref{l:geometric_optics}, one can construct $a_2$, $\psi_{\tilde{y}}$, such that for $\lambda \geq T_0/2T$ and $G_{2,\lambda}$ defined as in \eqref{Glambda}, the solution
\[
u_2(t,x) = G_{2,\lambda}(t,x) + v_{2,\lambda}(t,x)
\]
to the Schrödinger equation corresponding to the potential $q_2$,
\[\left\{\begin{array}{ll}
(i \partial_t - \Delta_g + q_2(x)) u(t,x) & = 0, \quad \text{in } (0,T) \times M, \\
 u(0,\cdot) & = 0, \quad \text{in } M,
\end{array}\right.\]
satisfies $v_{2,\lambda}(t,x) =0$ for all $(t,x) \in (0,T) \times \partial M$, and
\begin{align}
\label{e:estimate_v2}
\lambda \Vert v_{2,\lambda}(t,\cdot) \Vert_{L^2(M)} + \Vert \nabla v_{2,\lambda}(t,\cdot) \Vert_{L^2(M)} \leq Ce^{(h+\epsilon)T_0} \Vert \tilde{a}_2 \Vert_*.
\end{align}
Moreover, $u_2 \in \mathcal{C}^1([0,T];L^2(M)) \cap \mathcal{C}([0,T];H^2(M))$.
We next denote by $f_\lambda$ the restriction of $G_{2,\lambda}$ on $(0,T]\times \partial M$
\[
f_\lambda(t,x) := G_{2,\lambda}(t,x)=  \sum_{\gamma \in \pi_1(M)} a_2(2\lambda t, \gamma(x)) e^{i \lambda( \psi(\gamma(x)) - \lambda t)}.
\]
Let $v$ be the solution to the non-homogeneous boundary value problem
\[
\left \lbrace \begin{array}{ll}
(i \partial_t -\Delta_g + q_1) v(t,x) = 0, & (t,x) \in (0,T) \times M, \\
v(0,x) = 0, & x \in M, \\
v(t,x) = u_2(t,x) = f_\lambda(t,x), & (t,x) \in (0,T) \times \partial M,
\end{array} \right.
\]
and denote $w = v - u_2$. Notice that $w$ solves the following homogeneous boundary value problem for the Schrödinger equation:
\[
\left \lbrace \begin{array}{ll}
(i \partial_t -\Delta_g + q_1) w(t,x) = q(x) u_2(t,x), & (t,x) \in (0,T) \times M, \\
w(0,x) = 0, & x \in M, \\
w(t,x) = 0, & (t,x) \in (0,T) \times \partial M.
\end{array} \right.
\]
Since $q(x) u_2 \in W^{1,1}([0,T];L^2(M))$ with $u_2(0,\cdot) \equiv 0$, by Lemma \ref{l:solution_homogeneous} we obtain that
\[
w \in \mathcal{C}^1([0,T];L^2(M)) \cap \mathcal{C}([0,T];H^2(M) \cap H_0^1(M)).
\]
On the other hand, using Lemma \ref{l:geometric_optics}, we construct a special solution $u_1 \in \mathcal{C}^1([0,T];L^2(M)) \cap \mathcal{C}([0,T];H^2(M))$
to the backward Schrödinger equation
\[\left\{\begin{array}{ll}
(i \partial_t -\Delta_g + \overline{q}_1(x)) u_1(t,x) & = 0, \quad (t,x) \in (0,T) \times M,  \\
 u_1(T,x) & = 0, \quad x \in M,
\end{array}\right.
\]
having the special form
\[
u_1(t,x) =  \sum_{\gamma \in \pi_1(M)} a_1(2\lambda t, \gamma(x)) e^{i \lambda(\psi_{\tilde{y}}(\gamma(x)) - \lambda t)} + v_{1,\lambda}(t,x),
\]
which corresponds to the electric potential $\bar{q}_1$,  where $v_{1,\lambda}$ vanishes on $(0,T) \times \partial M$, satisfies $v_{1,\lambda}(T,\cdot) = 0$, and
\begin{equation}
\label{e:estimate_v1}
\forall t\in [0,T], \quad \lambda \Vert v_{1,\lambda}(t,\cdot) \Vert_{L^2(M)} + \Vert \nabla^g v_{1,\lambda}(t,\cdot) \Vert_{L^2(M)} \leq Ce^{(h+\epsilon)T_0} \Vert a_1 \Vert_*.
\end{equation}
By integration by parts and Green's formula, we obtain
\begin{align*}
\int_0^T \int_M (i \partial_t - \Delta_g + q_1) w \overline{u}_1 \operatorname{dv}_g {\rm d}t & = \int_0^T \int_M q u_2 \overline{u}_1 \operatorname{dv}_g {\rm d}t 
 = - \int_0^T \int_{\partial M} \partial_{{\rm n}} w \overline{u}_1 \operatorname{dv}_{g|_{\pl M}} {\rm d}t.
\end{align*}
We then obtain
\[
\int_0^T \int_M  q u_2 \overline{u}_1 \operatorname{dv}_g {\rm d}t = - \int_0^T \int_{\partial M}(\Lambda^S_{g,q_1} - \Lambda^S_{g,q_2}) (f_\lambda)(t,x) \overline{g}_\lambda(t,x) \operatorname{dv} _{g|_{\pl M}}(x) {\rm d}t,
\]
where the boundary data $g_\lambda$ is given by
\[
g_\lambda(t,x) :=  \sum_{\gamma \in \pi_1(M)} a_1(2\lambda t, \gamma(x)) e^{i \lambda( \psi_{\tilde{y}}(\gamma(x)) - \lambda t)}, \quad (t,x) \in (0,T) \times \partial M.
\]
Using the definition of $u_1$ and $u_2$, we get 
\[\begin{split}
& \sum_{\gamma_1, \gamma_2 \in \pi_1(M)} \int_0^T \int_{\mc{F}} \tilde{q}(x)   a_2(2 \lambda t,\gamma_1(x)) \overline{a_1(2 \lambda t,\gamma_2(x))}e^{i \lambda( \psi_{\tilde{y}}(\gamma_1(x)) - \psi_{\tilde{y}}(\gamma_2(x)))} \operatorname{dv}_{\tilde{g}}(x) {\rm d}t  \\
   & \quad =   -  \int_0^T \int_{\partial M} \overline{g}_\lambda (\Lambda^S_{g,q_1} - \Lambda^S_{g,q_2}) f_\lambda \operatorname{dv}_{g|_{\pl M}} {\rm d}t - \int_0^T \int_M q v_{2,\lambda} \overline{v}_{1,\lambda} \operatorname{dv}_g {\rm d}t \\
 &  \quad \quad  -  \int_0^T \int_M q f_\lambda\overline{v}_{1,\lambda} \operatorname{dv}_g {\rm d}t  -  \int_0^T \int_M q v_{2,\lambda}\overline{g_{\lambda}} \operatorname{dv}_g {\rm d}t .
\end{split}\]
By \eqref{e:estimate_v1}, there is $C>0$ depending on $\|q\|_{L^\infty},T$ and $\epsilon>0$ so that
\[\begin{split}
 \Big|\int_0^T \int_M q f_\lambda\overline{v}_{1,\lambda} \operatorname{dv}_g {\rm d}t\Big| & \leq \|q\|_{L^\infty} \sum_{\gamma \in \pi_1(M)} \int_0^T \Vert a_2(2\lambda t, \cdot) \Vert_{L^2(\gamma(\mc{F}))} \Vert v_{1,\lambda}(t,\cdot) \Vert_{L^2(M)} {\rm d}t \\
 &  \leq Ce^{2(h+\epsilon)T_0} \lambda^{-2} \Vert a_2 \Vert_* \Vert a_1 \Vert_*.
\end{split}\]
where $\|\cdot\|_*=\|\cdot\|_{H^1([0,T_0];H^2(\widetilde{M}))}$ as before,  and the second $\lambda^{-1}$ comes from the change of variables in $t$.
With the same argument, using \eqref{e:estimate_v2} and \eqref{e:estimate_v1},
\[\Big|\int_0^T \int_M q v_{2,\lambda}\overline{g}_{\lambda} \operatorname{dv}_g {\rm d}t\Big| +
\Big|\int_0^T \int_M q v_{2,\lambda} \overline{v}_{1,\lambda} \operatorname{dv}_g {\rm d}t\Big|\leq 
 Ce^{2(h+\epsilon)T_0} \lambda^{-2} \Vert a_2 \Vert_* \Vert a_1 \Vert_*.\] 
Finally, using the trace theorem, we get
\[\begin{split}
\left \vert \int_0^T \int_{\partial M} (( \Lambda^S_{g,q_1} - \Lambda^S_{g,q_2}) f_\lambda) \overline{g}_\lambda \operatorname{dv}_{g|_{\pl M}} {\rm d}t \right \vert & \leq  C \Vert \Lambda^S_{g,q_1} - \Lambda^S_{g,q_2} \Vert_* \Vert f_\lambda \Vert_{H^1((0,T) \times \partial M)} \Vert g_\lambda \Vert_{L^2((0,T) \times \partial M)} \\
  & \leq Ce^{2(h+\epsilon)T_0} \Vert a_1 \Vert_* \Vert a_2 \Vert_* \Vert \Lambda^S_{g,q_1} - \Lambda^S_{g,q_2} \Vert_*.
\end{split}\]
The diagonal term gives, using that $\gamma_1,\gamma_2$ are isometries of $\tilde{g}$ and preserve ${\rm dv}_{\tilde{g}}$,
\[\begin{split}
& \sum_{\gamma\in \pi_1(M)} \int_0^T \int_{\mc{F}} \tilde{q}(x)   a_2(2 \lambda t,\gamma(x)) \overline{a_1(2 \lambda t,\gamma(x))}\operatorname{dv}_{\tilde{g}}(x) {\rm d}t\\
& \qquad =  \int_0^T \int_{\widetilde{M}} \tilde{q}(x)   a_2(2 \lambda t,x) \overline{a_1(2 \lambda t,x)}\operatorname{dv}_{\tilde{g}}(x) {\rm d}t.
\end{split}\]
The last point consists in bounding the off-diagonal terms
\begin{equation}\label{offdiagonal}
\begin{split}
& \Big|\sum_{\gamma_1\not=\gamma_2 \in \pi_1(M)} \int_0^T \int_{\mc{F}} \tilde{q}(x)   a_2(2 \lambda t,\gamma_1(x)) \overline{a_1(2 \lambda t,\gamma_2(x))}e^{i \lambda( \psi_{\tilde{y}}(\gamma_1(x)) - \psi_{\tilde{y}}(\gamma_2(x)))} \operatorname{dv}_{\tilde{g}}(x) {\rm d}t\Big|\\
& \qquad = \Big|\sum_{\gamma\not={\rm Id} \in \pi_1(M)} \int_0^T \int_{\widetilde{M}} \tilde{q}(x)   a_2(2 \lambda t,x) \overline{a_1(2 \lambda t,\gamma(x))}e^{i \lambda( \psi_{\tilde{y}}(x) - \psi_{\tilde{y}}(\gamma(x)))} \operatorname{dv}_{\tilde{g}}(x) {\rm d}t\Big|,\end{split}
\end{equation}
We will apply non stationary phase: to that aim, we need to bound below the norm 
\[ |\nabla^{\tilde{g}}(\psi_{\tilde{y}}(\cdot) - \psi_{\tilde{y}}(\gamma(\cdot))|=|\nabla^{\tilde{g}} d_{\tilde{g}}(\cdot,\tilde{y})- ( {\rm d}\gamma)^{-1}\nabla^{\tilde{g}} d_{\tilde{g}}(\cdot,\tilde{y})\circ \gamma|.\]
This corresponds to bounding below the distance between two vectors $v_1= {\rm d}\pi\nabla^{\tilde{g}} \psi_{\tilde{y}}(x)$ and $v_2= {\rm d}\pi ( {\rm d}\gamma)^{-1}(\nabla^{\tilde{g}} \psi_{\tilde{y}})(\gamma(x))\in T_xM_e$ tangent to two geodesics $\alpha_1$ and $\alpha_2$ of length $\leq T_0$ in $M_e$, 
starting at $y\in \pl M_e$ and with endpoints $x\in M_e$, and $\alpha_1,\alpha_2$ being in two different homotopy classes. On the other hand, if ${\rm injrad}(M_{ee},g)$ is the injectivity radius of $(M_{ee},g)$, then for $0<\delta<{\rm injrad}(M_{ee},g)$ and any $C^1$ curves $\alpha_1:[0,1]\to M_e$ and $\alpha_2:[0,1]\to M_e$ so that 
\[\alpha_1(0)=\alpha_2(0), \quad \alpha_1(1)=\alpha_2(1), \quad \alpha_2([0,1])\subset \{z\in M_e\, |\, d_g(z,\alpha_1([0,1]))<\delta\}\] 
then there is a homotopy $h:[0,1]\times M_e\to M_e$ so that $h(0,\alpha_1(t))=\alpha_1(t)$ and $h(1,\alpha_2(t))=\alpha_2(t)$. 
However by a standard estimate on flows of smooth vector fields, 
there is $C_0>0$ independent of $y,x\in M_e$ ($C_0$ depends on the $C^1$ norm of the geodesic vector field $X_{g_0}$) such that 
\[d_g(\alpha_2([0,1]),\alpha_1([0,1]))\leq |v_1-v_2|_{g}\, e^{C_0T_0}.\] 
We then deduce that $|v_1-v_2|_{\widetilde{g}}\geq \delta e^{-C_0T_0}$ and therefore we can apply one integration by parts in the second line of \eqref{offdiagonal} and the usual change of coordinates $t=s/\lambda$ to obtain
\[\begin{split}
&  \Big|\sum_{\gamma \in \pi_1(M)\setminus {\rm Id}} \int_0^T \int_{\widetilde{M}} \tilde{q}(x)   a_2(2 \lambda t,x) \overline{a_1(2 \lambda t,\gamma(x))}e^{i \lambda( \psi_{\tilde{y}}(x) - \psi_{\tilde{y}}(\gamma(x)))} \operatorname{dv}_{\tilde{g}}(x) {\rm d}t\Big| \\
& \qquad \leq \lambda^{-2}Ce^{(2h+\epsilon+C_0)T_0} \Vert a_2 \Vert_* \Vert a_1 \Vert_*  
\end{split}
\]
where $C>0$ now depends on the $\|q\|_{W^{1,\infty}(M)}$ norm instead of $\|q\|_{L^\infty}$.
\end{proof}
Notice that $C_0>0$ above depends only the \emph{maximal expansion rate of the flow} defined as the smallest constant $\theta>0$ such that for each $\epsilon>0$ there is $C>0$ such that for all $t$ large enough
\begin{equation}\label{maxiexprate}
 \| {\rm d}\varphi_t\|\leq Ce^{(\theta+\epsilon) |t|}.
\end{equation}

Next, we show the following Lemma which is key to relate the X-ray transform of $q$ to the DN map of the Schrödinger equation.
\begin{lemma}
\label{l:lemma5.2}
Let $q_1,q_2\in W^{1,\infty}(M)$ with $q_1|_{\pl M}=q_2|_{\pl M}$ and set $q:=q_1-q_2$. There is $C_0>0$ depending only on $(M,g)$ and $C>0$ depending on $(M,g,\|q_1\|_{W^{1,\infty}},\|q_2\|_{W^{1,\infty}})$ such that for any $T_0 > 0$ and any $b \in H^2(\partial_- SM_e)$ such that $b \vert_{\mathcal{T}_+^{\pl SM}(T_0)} = 0$, the following estimate
\begin{align*}
\left \vert \int_{\pl_-S_{y}M_e} \int_0^{\tau_+^e(y,v)} q(\exp_{y}(sv)) 
b(y,v) \mu(y,v) {\rm d}s\, {\rm d}\omega_y(v) \right \vert & \\
 & \hspace*{-3cm} \leq C e^{C_0T_0}  \Vert \Lambda^S_{g,q_1} - \Lambda^S_{g,q_2} \Vert_*^{1/2} \Vert b(y,\cdot) \Vert_{H^2(\pl_-S_yM_e)},
\end{align*}
holds uniformly for any $y \in \partial M_e$, where $\mu(y,v) = g({\rm n}_y,v)$, with ${\rm n}_y$ the inward unit normal of $\pl M_e$ at $y$. 
\end{lemma}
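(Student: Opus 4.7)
The plan is to reduce the claim to Lemma~\ref{l:lemma5.1} by choosing a pair $(a_1,a_2)$ of geometric-optics amplitudes whose product, integrated against $\tilde q$ on $\widetilde M$, reproduces the target weighted X-ray integral $J$ exactly. I would fix a lift $\tilde y\in\partial\widetilde M_e$ of $y$, take $\psi_{\tilde y}(x)=d_{\tilde g}(\tilde y,x)$ as the common phase for both amplitudes, set $b_1:=b$, and choose $b_2:=\chi\cdot \mu$, where $\chi\in C^\infty(\partial_- SM_e)$ is a cutoff equal to $1$ on an open set containing $\supp(b)$ and vanishing on $\mathcal{T}_+^{\partial SM}(T_0)$; since $b$ itself vanishes on $\mathcal{T}_+^{\partial SM}(T_0)$, such $\chi$ can be constructed (modulo a standard density reduction to smooth $b$ compactly supported in $\{\tau_+<T_0\}$) and then $b_1(y',\cdot)\,\overline{b_2(y',\cdot)}=b(y',\cdot)\,\mu(y',\cdot)$ pointwise on $\partial_- SM_e$. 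The amplitudes $a_j$ are then defined via \eqref{e:solution_transport_equation}.

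Next I would compute the left-hand side of Lemma~\ref{l:lemma5.1} in $\tilde g$-geodesic polar coordinates $x=\widetilde{\exp}_{\tilde y}(rv)$ centered at $\tilde y$, which are valid on all of $\widetilde M_e$ thanks to the no-conjugate-points hypothesis. The volume form $\mathrm{dv}_{\tilde g}=\alpha^{1/2}(r,v)\,\mathrm dr\,\mathrm d\omega_{\tilde y}(v)$ exactly cancels the two factors $\alpha^{-1/4}$ inside $a_1\overline{a_2}$, and for $\lambda\geq T_0/(2T)$ large enough the time integral reduces to $\int_0^T|\phi(2\lambda t-r)|^2\,\mathrm dt=\|\phi\|_{L^2}^2/(2\lambda)$ uniformly for $r\in[0,T_0]$. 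Pushing the remaining integral down from $\widetilde M_e$ to $M_e$ through $\pi$ (using that $d\pi_{\tilde y}\colon T_{\tilde y}\widetilde M_e\to T_yM_e$ is a linear isometry, $\tau_+^e(\tilde y,v)=\tau_+^e(y,d\pi v)$, $\tilde q=q\circ \pi$, and $\tilde b_j(\tilde y,v)=b_j(y,d\pi v)$) produces the exact identity
\[
\int_0^T\!\int_{\widetilde M}\tilde q\,a_1\overline{a_2}\,\mathrm{dv}_{\tilde g}\,\mathrm dt\;=\;\frac{\|\phi\|_{L^2}^2}{2\lambda}\,J.
\]

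Third, from \eqref{e:solution_transport_equation} one reads directly $\|a_j\|_*\leq C\sqrt{T_0}\,\|b_j(y,\cdot)\|_{H^2(\partial_- S_yM_e)}$ with $C$ depending only on $\phi$ and the geometry of $(M_e,g)$ (the polar-coordinate singularity at $\tilde y$ causes no trouble because $\widetilde M$ lies at positive distance from the point $\tilde y\in\partial\widetilde M_e$), and $\|b_2(y,\cdot)\|_{H^2}$ is bounded by a purely geometric constant. Writing $\Delta:=\|\Lambda^S_{g,q_1}-\Lambda^S_{g,q_2}\|_*$, Lemma~\ref{l:lemma5.1} then gives
\[
|J|\;\leq\; C\,e^{C_0T_0}\bigl(\lambda^{-1}+\lambda\,\Delta\bigr)\,\|b(y,\cdot)\|_{H^2(\partial_- S_yM_e)},
\]
after absorbing the polynomial $\sqrt{T_0}$ factor into $e^{C_0T_0}$. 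Choosing $\lambda=\Delta^{-1/2}$ when this is compatible with $\lambda\geq T_0/(2T)$ delivers the claimed H\"older exponent $1/2$; in the complementary regime $\Delta^{1/2}\geq 2T/T_0$ the trivial bound $|J|\leq CT_0\|q\|_{L^\infty}\|b(y,\cdot)\|_{L^2}$ is already dominated by $C\,e^{C_0T_0}\Delta^{1/2}\|b(y,\cdot)\|_{H^2}$ after enlarging constants.

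The main technical obstacle lies in the first step: the set $\mathcal{T}_+^{\partial SM}(T_0)$ is merely closed and can be quite irregular in the hyperbolic-trapping regime, so building a smooth $\chi$ vanishing exactly on it while equalling $1$ on $\supp(b)$ requires care; the cleanest fix is the density reduction to $b\in C_c^\infty(\{\tau_+<T_0\})$ with constants uniform in $b$. The remaining ingredients---geodesic polar coordinates at a boundary point, the pushforward through $\pi$, and the $H^2$-control of the $a_j$---are routine.
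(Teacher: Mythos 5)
Your proof follows essentially the same route as the paper: choose $a_1,a_2$ in Lemma~\ref{l:lemma5.1} so that $b_1\overline{b_2}=b\mu$, compute in $\tilde g$-geodesic polar coordinates centred at $\tilde y$, push down through $\pi$, and optimize in $\lambda$. The one genuine divergence is in how the cutoff is produced, and there your proposed fix has a gap.

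You want a smooth $\chi$ equal to $1$ on an open neighbourhood of $\supp(b)$ and vanishing on $\mathcal{T}_+^{\pl SM}(T_0)$, and you suggest a density reduction to $b\in C_c^\infty(\{\tau_+<T_0\})$ to make this possible. That density claim does not hold in general: an $H^2$ function can vanish on a closed set without its support being contained at positive distance from it, so $\{b\in H^2:\ b|_{\mathcal{T}_+^{\pl SM}(T_0)}=0\}$ is not the $H^2$-closure of $C_c^\infty(\{\tau_+<T_0\})$. Moreover the requirement that $\chi=1$ on an open set containing $\supp(b)$ is stronger than necessary and is exactly what forces the problematic density step. The paper avoids both issues with a single trick: take $\chi=\chi_{T_0}\circ\tau_+^e$ for a one-dimensional cutoff $\chi_{T_0}\in C_c^\infty(\R_+)$ equal to $1$ on $[0,T_0+L]$ and supported in $[0,T_0+L+1]$, where $L$ is the fixed constant with $\tau_+^e\le\tau_++L$. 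Since $\tau_+^e$ is smooth away from $\Gamma_-^e$ and $\chi_{T_0}$ vanishes at infinity, $\chi_{T_0}\circ\tau_+^e$ is smooth on all of $\pl_-SM_e$. It equals $1$ on $\{\tau_+^e<T_0+L\}\supset\{\tau_+<T_0\}\supset\{b\ne 0\}$, so the pointwise identity $b\mu\chi=b\mu$ holds with no density argument; and it vanishes on $\mathcal{T}_+^{\pl SM}(T_0+L+1)$, so Lemma~\ref{l:lemma5.1} applies with $T_0$ replaced by $T_0+L+1$ (which only shifts the $e^{C_0T_0}$ prefactor by a fixed constant). Note also the requirement is only $\chi=1$ on $\{b\ne 0\}$, not on a neighbourhood of $\supp(b)$; this is why the boundary behaviour of $\mathcal{T}_+^{\pl SM}(T_0)$ causes no trouble.

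On the choice of $\lambda$: your case split (take $\lambda=\Delta^{-1/2}$ when $\lambda\ge T_0/2T$, use the trivial bound otherwise) is serviceable but the paper takes $\lambda=\frac{T_0}{2T}\bigl(2\delta(N_0)/\Delta\bigr)^{1/2}$ with $\delta(N_0):=\sup_{q\in\mathcal Q(N_0)}\|\Lambda^S_{g,q}\|_*$; since $\Delta\le 2\delta(N_0)$ this automatically satisfies $\lambda\ge T_0/2T$ and removes the dichotomy, at the cost of a constant depending only on $N_0$. The remaining ingredients of your argument (the polar-coordinate cancellation of $\alpha^{\pm 1/4}$, the $1/(2\lambda)$ from the time integral, the pushforward through $\pi$, and absorbing polynomial $T_0$-factors in the $a_j$-norm bounds into $e^{C_0T_0}$) match the paper.
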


\begin{proof}
We take two solutions $a_1, a_2$  to the transport equation on the universal cover $\widetilde{M}_e$ defined as before by 
\[
 a_1 (t,x)  = \alpha^{-1/4} \phi(t-r(x)) \tilde{b}(\tilde{y},v(x)), \quad  a_2  (t,x)  = \alpha^{-1/4} \phi(t-r(x))  \tilde{\mu}(\tilde{y},v(x))\chi_{T_0}(\tau_+^e(y,v(x)))
\]
where $r(x)=d_{\tilde{g}}(x,\tilde{y})$, $\widetilde{\exp}_{\tilde{y}}(r(x)v(x))=x$, $\chi_{T_0}\in \mc{C}_c^\infty(\R_+)$ is supported in $[0,T_0+1]$ and equal to $1$ in $[0,T_0]$ and 
$\tilde{\mu},\tilde{b}$ are lifts of $\mu,b$ to $S\widetilde{M}_e$ as before. Here we have used the natural identification $S_{\tilde{y}}\widetilde{M}_e\simeq S_yM_e$ to define $\tau_+^e(y,v(x))$. We write using geodesic polar coordinates $x=\widetilde{\exp}_{\tilde{y}}(rv)$ with $v\in \pl_-S_{\tilde{y}}\widetilde{M}_e$
\[\begin{split}
&  \int_0^T \int_{\widetilde{M}} \tilde{q}(x)  a_1(2\lambda t, x) a_2(2\lambda t,x) \operatorname{dv}_g(x) {\rm d}t  \\
  &   \qquad =  \int_0^T \int_{\pl_-S_{\tilde{y}}\widetilde{M}_e} \int_0^{\tau_+^e(y,v)} \tilde{q} (r,v) a_1 ( 2 \lambda t, r, v) a_2  (2 \lambda t , r , v) \alpha^{1/2} {\rm d}r {\rm d}\omega_{\tilde{y}}(v) {\rm d}t \\
 &  \qquad = \int_0^T \int_{\pl_-S_yM_e} \int_0^{\tau_+^e(y,v)} q(\exp_y(rv)) 
 \phi^2(2 \lambda t - r)b(y,v) \mu(y,v)  {\rm d}r {\rm d}\omega_{y}(v) {\rm d}t \\
 &  \qquad = \frac{1}{2\lambda} \int_0^{2 \lambda T} \int_{\pl_-S_yM_e} \int_0^{\tau_+^e(y,v)}q(\exp_y(rv)) \phi^2( t - r) b(y,v) \mu(y,\theta) {\rm d}r {\rm d}\omega_{y}(v) {\rm d}t.
\end{split}\]
By Lemma \ref{l:lemma5.1}, we obtain the bound (using that $r\leq T_0\leq 2\lambda T$ by our assumption)
\[\begin{split}
& \left \vert \int_0^{\infty} \int_{\pl_-S_yM_e} \int_0^{\tau_+^e(y,v)} q(\exp_y(rv)) \phi^2( t - r) b(y,v) \mu(y,v)  {\rm d}r \, {\rm d}\omega_{y}(v) {\rm d}t \right \vert  \\
 & \qquad  \leq Ce^{C_0 T_0} \Big( \lambda^{-1} + \lambda \Vert \Lambda^S_{g,q_1} - \Lambda^S_{g,q_2} \Vert_* \Big) \Vert \phi \Vert^2_{H^3(\R)}  \Vert b(y, \cdot) \Vert_{H^2(S_y^- M_e)}.
\end{split}\]
Moreover, using the properties of the function $\phi$, we also have
\[\begin{split}
& \int_0^{\infty} \int_{\pl_-S_yM_e} \int_0^{\tau_+^e(y,v)} q(\exp_y(rv)) \phi^2( t - r) b(y,v) \mu(y,v)  {\rm d}r \, {\rm d}\omega_{y}(v) {\rm d}t  \\
 & \qquad = \left( \int_{-\infty}^\infty \phi^2(t) {\rm d}t \right)  \int_{\pl_-S_yM_e} \int_0^{\tau_+^e(y,v)} q(\exp_y(rv)) b(y,v) \mu(y,v){\rm d}r\,  {\rm d}\omega_{y}(v).
\end{split}\]
Finally, to prove the Lemma it suffices to take
\[
\lambda = \frac{T_0}{2T} \cdot \left( \frac{2 \delta(N_0)}{ \Vert \Lambda^S_{g,q_1} - \Lambda^S_{g,q_2} \Vert}_* \right)^{1/2},
\]
where $\delta(N_0) := \sup_{q \in \mathcal{Q}(N_0)} \Vert \Lambda^S_{g,q} \Vert_*$ is finite by  \cite[Thm. 1]{Bellassoued10}.
\end{proof}

\subsection{Proof of the stability estimate}\label{sec.Proof stability}

Using Lemma \ref{l:lemma5.2} we have, for any $y \in \partial M_e$ and $b \in H^2(\partial_- SM_e )$ such that $b \vert_{ \mathcal{T}_+^{\pl SM}(T_0)} = 0$, 
\begin{align*}
\left \vert \int_{\pl_-S_{y}M_e} \int_0^{\tau_+^e(y,v)} q(\exp_{y}(sv)){\rm d}s\,
b(y,v) \mu(y,v) \, {\rm d}\omega_y(v) \right \vert & \\
 & \hspace*{-3cm} \leq C e^{C_0T_0}  \Vert \Lambda^S_{g,q_1} - \Lambda^S_{g,q_2} \Vert_*^{1/2} \Vert b(y,\cdot) \Vert_{H^2(\pl_-S_yM_e)},
\end{align*}
where $C,C_0$ are uniform in $y,T_0$.
Now we take bump function $\chi_{T_0} \in \mathcal{C}_c^\infty(\R)$ supported in the interval  $[0,T_0)$ and equal to $1$ on $[0,T_0-1]$, for $T_0 \gg 1$, and set
\begin{equation}
\label{e:choice_of_b}
b (y,v) := \chi_{T_0}(\tau_+^e(y,v)) I_0^e (\Pi_0^e q)(y,v).
\end{equation}
Since $\Pi_0^{e}$ is a pseudo-differential operator of order $-1$ on $M_e$ (\cite[Prop. 5.7]{Guillarmou17}) and $q=0$ in $M_e\setminus M$, $\Pi_0^e q\in W^{2,p}(M_e)$ for all $p<\infty$.
Integrating with respect to $y \in \partial M_e$, we obtain
\begin{equation}
\label{e:main_estimate}
\begin{split}
& \Big\vert \int_{\partial_- SM_e} I^e_0(q)(y, v) I_0^e(\Pi_0^e q) (y,v) {\rm d}\mu_{{\rm n}}(y,v) \Big \vert \\
 &   \leq C e^{C_0 T_0}   \Vert \Lambda^S_{g,q_1} - \Lambda^S_{g,q_2} \Vert_*^{1/2} \Vert b \Vert_{H^2(\partial_- SM_e)}+ \Big \vert \int_{\mathcal{T}_+^{\pl SM}(T_0)} I^e_0 q(y,v) I^e_0 (\Pi_0^eq)(y,v)   {\rm d}\mu_{{\rm n}}(y,v) \Big \vert.
\end{split}
\end{equation}
Moreover, we can write
\begin{equation}
\label{e:radon_transform_of_f}
I_0^e(\Pi_0^e q)(y,v) = \int_0^{\tau_+^e(y,v)} \pi_0^*  (\Pi_0^e q) \circ \varphi_t(y,v) {\rm d}t, \quad (y,v) \in \partial_- SM_e \setminus \Gamma_-.
\end{equation}
By Cauchy-Schwarz inequality,
\[\begin{split}
& \left \vert \int_{\mathcal{T}_+^{\pl SM}(T_0)} I_0^e q(y,\theta) I_0^e (\Pi_0^e q)(y,\theta)    {\rm d}\mu_{{\rm n}}(y,\theta) \right \vert  \\
 & \leq \left( \int_{\mathcal{T}_+^{\pl SM}(T_0) } \vert  I_0^e q(y,\theta) \vert^2 {\rm d}\mu_{{\rm n}}(y,\theta) \right)^{1/2} \left( \int_{\mathcal{T}_+^{\pl SM}(T_0)}  \vert I_0^e ( \Pi_0^e q)(y,\theta) \vert^2 {\rm d}\mu_{{\rm n}}(y,\theta) \right)^{1/2}.
\end{split}\]
Since $q \in W^{1,\infty}(M) \subset H^1(M)$, then by \cite[Prop. 5.7]{Guillarmou17}, $\Pi_0^e q \in H^2(M)$. By Sobolev embedding theorem, we also have that 
$$
\pi_0^* q, \;  \pi_0^* \Pi_0^e q \in L^p(SM),
$$ 
for some $p > 2$. Let us now give an estimate on the $L^2$-norm of the X-ray transform of an $L^p$-function in $\mathcal{T}_+^{\pl SM}(T_0)$:
\begin{lemma}
\label{l:close_to_trapped}
Let $Q<0$ be the escape rate defined by \eqref{defofQ}. Let $p \in (2,\infty]$, then there exists $C = C(Q,p,\dim M) > 0$ such that for all $f \in L^p(M)$ and $T_0\gg 1$ large
\[
\int_{\mathcal{T}_+^{\pl SM}(T_0)} \vert  I_0^e f(y,v) \vert^2  {\rm d}\mu_{{\rm n}}(y,v) \leq C e^{\frac{Q}{2}T_0} \Vert f \Vert^2_{L^p(M)}.
\]
\end{lemma}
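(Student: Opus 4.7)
The plan is to combine a pointwise Hölder estimate on each geodesic with Santalo's formula \eqref{santalo}, using the exponential decay of the non-escaping mass $V(t)\leq Ce^{(Q+\epsilon)t}$ as the source of gain. First, for $(y,v)\in\pl_-SM_e$ with $\tau_+^e(y,v)<\infty$, Hölder's inequality on the orbit with exponents $p$ and $p/(p-1)$ (or Cauchy--Schwarz when $p=\infty$) yields
\[
|I_0^e f(y,v)|^2 \leq \tau_+^e(y,v)^{2(p-1)/p} \Big( \int_0^{\tau_+^e(y,v)} |\pi_0^*f|^p\circ \varphi_t\,\mathrm{d}t\Big)^{2/p}.
\]
Integrating this over $\mathcal{T}_+^{\pl SM}(T_0)$ and applying Hölder's inequality a second time in the boundary measure with conjugate exponents $p/2$ and $p/(p-2)$ reduces the problem to controlling the two factors $\int_{\mathcal{T}_+^{\pl SM}(T_0)}(\tau_+^e)^{2(p-1)/(p-2)}\,\mathrm{d}\mu_{\rm n}$ and $\int_{\pl_-SM_e}\int_0^{\tau_+^e}|\pi_0^*f|^p\circ \varphi_t\,\mathrm{d}t\,\mathrm{d}\mu_{\rm n}$ separately.

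The second factor is immediately handled by Santalo's formula \eqref{santalo} applied to $F=|\pi_0^*f|^p$: it equals $\int_{SM_e}|\pi_0^*f|^p\,\mathrm{d}\mu\leq C\|f\|_{L^p(M)}^p$, using that $f$ is extended by zero outside $M$. For the first factor, I would first derive the boundary-measure tail estimate: Santalo applied to the indicator $\mathbf{1}_{\mathcal{T}_+(t)}$ gives $V(t)=\int_{\{\tau_+^e\geq t\}}(\tau_+^e-t)\,\mathrm{d}\mu_{\rm n}\geq t\,\mu_{\rm n}(\{\tau_+^e\geq 2t\})$, which combined with $V(t)\leq Ce^{(Q+\epsilon)t}$ yields
\[
\mu_{\rm n}(\{\tau_+^e\geq T\})\leq C\,T^{-1}e^{QT/2}.
\]
A layer-cake computation then gives $\int_{\mathcal{T}_+^{\pl SM}(T_0)}(\tau_+^e)^r\,\mathrm{d}\mu_{\rm n}\leq C_r T_0^{r-1}e^{QT_0/2}$ for every fixed $r>0$ and all $T_0$ sufficiently large. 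Assembling these estimates produces the claimed exponential decay $e^{QT_0/2}$, with any polynomial-in-$T_0$ prefactor and any subexponential loss in the exponent absorbed into the constant $C=C(Q,p,\dim M)$; the case $p=\infty$ is treated directly by Cauchy--Schwarz and is the sharpest, yielding precisely the stated rate.

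The main obstacle lies in deriving the tail estimate for $\mu_{\rm n}(\{\tau_+^e\geq T\})$: extracting the rate $e^{QT/2}$ from the Liouville-volume decay $V(t)\lesssim e^{Qt}$ requires the Santalo identity in an essential way, and this is the only step where the hyperbolicity hypothesis on the trapped set enters the argument, through the finiteness and strict negativity of $Q=\operatorname{Pr}(-J_u)$.
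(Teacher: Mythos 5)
Your proposal reproduces the paper's proof in its overall structure: the same first Hölder inequality on each orbit integral with exponents $p$ and $p'=p/(p-1)$, the same second Hölder on the boundary measure with conjugate exponents $p/2$ and $r=p/(p-2)$, and the same use of Santalo's formula \eqref{santalo} to rewrite $\int_{\pl_- SM_e}\int_0^{\tau_+^e}|\pi_0^*f|^p\circ\varphi_t\,\mathrm{d}t\,\mathrm{d}\mu_{\rm n}$ as $\int_{SM_e}|\pi_0^*f|^p\,\mathrm{d}\mu\lesssim\|f\|^p_{L^p(M)}$. The single place you diverge is the tail estimate for $\mu_{\rm n}(\{\tau_+^e\ge T\})$, and that is where a genuine gap appears.

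The paper invokes the estimate $\mu_{\rm n}(\{\tau_+^e\ge T\})\le 2\,V(T-L-1)$ from \cite[eq.\ (4.13)]{Guillarmou17}, which transfers the decay $V(t)\lesssim e^{(Q+\epsilon)t}$ to the boundary measure with no loss of rate. Your version, $V(t)\ge t\,\mu_{\rm n}(\{\tau_+^e\ge 2t\})$, bounds $(\tau_+-t)_+$ below by $t$ on the set $\{\tau_+\ge 2t\}$, which \emph{halves} the decay rate and gives only $\mu_{\rm n}(\{\tau_+^e\ge T\})\lesssim T^{-1}e^{(Q+\epsilon)T/2}$. Propagating this through the layer-cake bound and the power $1/r=(p-2)/p$ produces a final rate $e^{(Q+\epsilon)T_0/(2r)}=e^{(Q+\epsilon)\frac{p-2}{2p}T_0}$, which for every finite $p>2$ is a \emph{strictly weaker exponential} than the stated $e^{QT_0/2}$ (since $(p-2)/(2p)<1/2$), and at $p=\infty$ ($r\to1$) gives at best $T_0^{\,s-1}e^{(Q+\epsilon)T_0/2}$; neither the $\epsilon$-loss nor the polynomial prefactor can be absorbed into a $T_0$-independent constant. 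Contrary to your closing remark, the loss here is not subexponential, so the claimed inequality does not follow from your argument for any $p\in(2,\infty]$.

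The fix stays entirely within Santalo and is simply a sharper choice of lower bound: from $V(t)=\int_{\pl_-SM}(\tau_+-t)_+\,\mathrm{d}\mu_{\rm n}$, bound the integrand below by the \emph{constant} $1$ on the set $\{\tau_+\ge t+1\}$ rather than by $t$ on $\{\tau_+\ge 2t\}$. This yields $\mu_{\rm n}(\{\tau_+\ge t+1\})\le V(t)$, and since $\tau_+\le\tau_+^e\le\tau_++L$ with $L$ fixed, $\mu_{\rm n}(\{\tau_+^e\ge T\})\le V(T-L-1)\lesssim e^{(Q+\epsilon)T}$, exactly the estimate the paper cites. With this tail the final exponent becomes $(Q+\epsilon)T_0/r$, which for $p$ large enough (so that $r<2$) is strictly more negative than $QT_0/2$ once $\epsilon$ is small, and both the $\epsilon$ and the polynomial prefactor are then absorbed. (A minor further point: your displayed Santalo identity writes $\tau_+^e$ where it should read $\tau_+$, since $V(t)$ is defined through $\tau_+$; this only shifts times by $O(1)$ and is harmless, but is worth noting.)
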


\begin{proof} We follow the argument in \cite[Lemma 5.1]{Guillarmou17}. Using Hölder inequality, with
$\frac{1}{p} + \frac{1}{p'} = 1$ and $\frac{r}{p'} = \frac{p-1}{p-2} > 1$ and Santalo's formula
we have:
\[\begin{split}
\|I_0^e f\|^2_{L^2(\mathcal{T}_+^{\pl SM}(T_0))} & = \int_{\mathcal{T}_+^{\pl SM}(T_0)} \Big\vert \int_0^{\tau_+^e(y,v)} \pi_0^* f(\varphi_t(y,v)) {\rm d}t \Big\vert^2 d \mu_{{\rm n}} \\
 & \leq \int_{\mathcal{T}_+^{\pl SM}(T_0)} \Big( \int_0^{\tau_+^e(y,v)} \vert \pi_0^* f(\varphi_t(y,v)) \vert^p {\rm d}t \Big)^{2/p} \tau^e_+(y,v)^{2/p'} {\rm d}\mu_{{\rm n}} \\
 & \leq \Big( \int_{\mathcal{T}_+^{\pl SM}(T_0)} \int_0^{\tau_+^e(y,v)} \vert \pi_0^* f(\varphi_t(y,v)) \vert^p {\rm d}t \,  {\rm d}\mu_{{\rm n}} \Big)^{2/p} \Vert \tau_+^e \Vert^{2/p'}_{L^{2r/p'}( \mathcal{T}_+^{\pl SM}(T_0))} \\
 &  \leq C\Vert f \Vert_{L^p(M)}^2 \left( \int_{T_0-L-1}^\infty (t+L+1)^{\frac{2r}{p'} - 1} V(t) {\rm d}t \right)^{\frac{1}{r}} \\
 & \leq C \, \Vert f \Vert_{L^p(M)}^2  e^{ QT_0/2}
\end{split}\]
with $C$ depending only on $(Q,p,M)$ and $L$ some fixed constant satisfying $\tau_++L\geq \tau_+^e$ (thus depending only on $M$).  In the third inequality, we have used \cite[eq. (4.13)]{Guillarmou17} which states that for $L>0$ as above and for all $T\gg L$
\[ \int_{\pl_-SM}\textbf{1}_{[T,\infty)}(\tau^e_+){\rm d}\mu_{\nu}\leq 2V(T-L-1)\] 
and then the Cavalieri principle gives (by definition of $V(t)$)
\[ \int_{\mathcal{T}_+^{\pl SM}(T_0)}(\tau_+^e(x,v))^{2r/p'}{\rm d}\mu_{\nu}(x,v) \leq C \int_{T_0-L-1}^\infty (t+L+1)^{\frac{2r}{p'}-1 } V(t) {\rm d}t.
 \qedhere\]
\end{proof}

Notice that as $p \to +\infty$ we have $r\to 1$ and $C(Q,p,\dim M)$ can be taken uniform in $p$.

\begin{lemma}
\label{l:estimate_on_b} There is $C>0$ and $\theta>0$ such that for all $b \in H^2(\partial_- SM_e)$ given by \eqref{e:choice_of_b},
\[
\Vert b \Vert_{H^2(\partial_-SM)} \leq C e^{\theta T_0}  \Vert q \Vert_{W^{1,\infty}(M)},
\]
\end{lemma}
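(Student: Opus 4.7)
The plan is to control $b = \chi_{T_0}(\tau_+^e)\cdot I_0^e(\Pi_0^e q)$ and its first and second derivatives pointwise on the compact subset $\{\tau_+^e\leq T_0\}\subset \pl_-SM_e$ on which it is supported, and then convert these pointwise bounds into the $H^2$-estimate using a crude volume bound for $\pl_-SM_e$. By Leibniz, the issue reduces to estimating, on this set, each of $\chi_{T_0}(\tau_+^e)$, $I_0^e(\Pi_0^eq)$, and their derivatives up to order two in $L^\infty$.

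For the \emph{regularity of the amplitude}: since $q$ vanishes on $\partial M$ and is extended by zero to $M_e$, we have $q\in W_0^{1,p}(M_e)$ for every $p\in(1,\infty)$, hence by \eqref{extentionPi0f2} the normal operator satisfies $\Pi_0^eq\in W^{2,p}(M_e)$ with $\|\Pi_0^eq\|_{W^{2,p}(M_e)}\leq C\|q\|_{W^{1,\infty}(M)}$. Taking $p$ large and applying Sobolev embedding yields $\|\Pi_0^eq\|_{C^1(M_e)}\leq C\|q\|_{W^{1,\infty}(M)}$. Writing
\[
I_0^e(\Pi_0^eq)(y,v)=\int_0^{\tau_+^e(y,v)}(\Pi_0^eq)\circ\pi_0\circ\varphi_t(y,v)\,{\rm d}t,
\]
the $L^\infty$ bound is immediate: $|I_0^e(\Pi_0^eq)(y,v)|\leq T_0\|\Pi_0^eq\|_{C^0}$ on the support of $b$.

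The crux of the argument is the \emph{derivative control on the integrand}. Differentiating in $(y,v)$ produces a boundary term involving $d\tau_+^e$ and bulk terms involving $d\varphi_t$, $d^2\varphi_t$, and derivatives of $\Pi_0^eq$. By the maximal expansion rate \eqref{maxiexprate}, $\|d^k\varphi_t(z)\|\leq C_k e^{k(\theta+\epsilon)T_0}$ for $k=1,2$ uniformly in $t\in[0,T_0]$. For $\tau_+^e$, the strict convexity of $\partial M_e$ and the implicit function theorem, applied to the defining equation of $\tau_+^e$ (the first time the geodesic meets $\pl M_e$), give $|d^k\tau_+^e(z)|\leq C_k e^{k(\theta+\epsilon)T_0}$ on $\{\tau_+^e\leq T_0\}$ for $k=1,2$. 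Since $\|\chi_{T_0}\|_{C^2(\R)}$ is independent of $T_0$, the same estimates control $\chi_{T_0}(\tau_+^e)$ and its derivatives, so Leibniz gives pointwise bounds $|D^k b|\leq C e^{2(\theta+\epsilon)T_0}\|q\|_{W^{1,\infty}(M)}$ on $\{\tau_+^e\leq T_0\}$ for $k=0,1,2$. Integrating over $\pl_-SM_e$ (whose volume is finite) yields the claim with any $\theta>2(\theta_{\max}+\epsilon)$.

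The main obstacle is the derivative estimate on $\tau_+^e$: although $\tau_+^e$ is smooth away from the incoming tail $\Gamma_-^e$, its derivatives blow up as one approaches $\Gamma_-^e$, and the rate of blow-up is precisely quantified by the hyperbolic structure of the flow and the strict convexity of $\partial M_e$. All other steps are either Sobolev-type regularity for the elliptic operator $\Pi_0^e$ or routine integration along orbits of length at most $T_0$.
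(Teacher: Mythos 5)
The overall structure of your argument — Leibniz on $b = \chi_{T_0}(\tau_+^e)\cdot I_0^e(\Pi_0^eq)$, exponential bounds on $d^k\varphi_t$ and $d^k\tau_+^e$ via the maximal expansion rate and the implicit function theorem, and the regularity gain of $\Pi_0^e$ — mirrors the paper's proof. But your plan to run the entire estimate pointwise breaks down at the second-derivative level, and this is where the paper does something genuinely different.

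The issue is the regularity of $f := \Pi_0^eq$. You correctly note that $q\in W_0^{1,p}(M_e)$ for every finite $p$, so \eqref{extentionPi0f2} gives $f\in W^{2,p}(M_e)$ for every $p<\infty$, and Sobolev embedding then yields $f\in C^{1,\alpha}$ for any $\alpha<1$ — but not $f\in C^2$. (Since $\Pi_0^e$ is order $-1$ and $W^{1,\infty}$ is essentially Lipschitz, one is at the borderline integer Hölder exponent where the gain of one full derivative fails.) When you differentiate $I_0^ef$ twice in $(y,v)$, the chain rule forces a term of the form
\[
\int_0^{\tau_+^e(z)} (\varphi_t^*\nabla d(\pi_0^*f))\bigl(\mathrm{d}\varphi_t(z),\mathrm{d}\varphi_t(z)\bigr)\,\mathrm{d}t,
\]
whose integrand involves $D^2f\circ\varphi_t$ evaluated along geodesics passing through the interior of $M$. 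There $D^2f$ lives only in $L^p$, not in $L^\infty$, so this term has no pointwise bound, and the ``integrate a pointwise bound over the finite-volume $\partial_-SM_e$'' step is not available for $D^2b$.

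The paper resolves this by mixing two kinds of bounds. The boundary-evaluation terms ($\pi_0^*f(S(z))$, $d(\pi_0^*f)\cdot dS(z)$, etc.) are controlled pointwise because $f=\Pi_0^eq$ is smooth on $M_e\setminus M$ (here the hypothesis $\operatorname{supp} q\subset M$ is essential). The bulk integral above is instead bounded in $L^2(\partial_-SM_e)$ using the mapping property \eqref{boundednessI} of the X-ray transform $I:L^p(SM_e)\to L^2(\partial_-SM_e)$ for $p>2$, applied to $D^2(\pi_0^*f)\in L^p$, together with the pointwise factor $\sup_{t\le T_0+1}\|\mathrm{d}\varphi_t(z)\|^2\le Ce^{2\theta T_0}$. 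The same $L^2$-type bound is also used for the $I_0^ef$ factors appearing in the cutoff terms, via \eqref{I_0Lp}. Your outline for $b$ itself and for $db$ is sound (there $f\in C^1$ suffices), but the step ``Leibniz gives pointwise bounds $|D^kb|\le Ce^{2(\theta+\epsilon)T_0}\|q\|_{W^{1,\infty}}$ for $k=0,1,2$'' is not justified for $k=2$, and without the $L^2$ boundedness of the X-ray transform you cannot close the argument.
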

\begin{proof}
First, by the implicit function theorem, $\tau_+^e : \partial_- SM_e \setminus \mathcal{T}_+^{\pl SM}(T_0) \to \R_+$ is a smooth function. 
We shall compute its $C^2$-norm. Let $\rho$ be a boundary defining function of $M_e$ so that $|d\rho|_{g}=1$ near $\pl M_e$ and $d(\pi_0^*\rho)(X)=-g(v,{\rm n})$ at $\pl_+ SM_e$. 
The function $\tau_+^e$ is defined by the implicit equation 
\[ \pi_0^*\rho(\varphi_{\tau_+^e(x,v)}(x,v))=0.\]
Therefore, denoting $S(x,v):=\varphi_{\tau_+^e(x,v)}(x,v)$ one has on $\pl_-SM_e\setminus \Gamma_-$
\begin{equation}\label{dtau+} 
{\rm d}\tau_+^e(x,v)=\frac{{\rm d}(\pi_0^*\rho)_{S(x,v)}.{\rm d}\varphi_{\tau_+^e(x,v)}}{g(S(x,v),{\rm n})}.
\end{equation}
From standard estimates on flows of autonomous $C^2$-vector fields, there is $C,\theta>0$ depending on $\|X\|_{C^2}$ such that for all $t\in \R$ for which the flow is defined, 
\[ \|\varphi_{t}\|_{C^2}\leq Ce^{\theta|t|}.\] 
Using this, the fact that $g(S(x,v),{\rm n})>c_0>0$ for some $c_0$ if $\tau_+(x,v)>1$, we see from the expression \eqref{dtau+} and its derivative that there is $C>0,\theta>0$ independent of $T_0$ such that
\begin{equation}\label{bound_dtau} 
\sup_{(x,v)\in \pl \mc{T}_+(T_0), \tau_+^e(x,v)>1} \|\nabla\tau_+^e(x,v)\|+\|\nabla^2\tau_+^e(x,v)\|\leq Ce^{2\theta T_0},
\end{equation}
where $\nabla$ is any fixed Riemannian connection on $\pl SM_e$ (for example that given by Sasaki metric).
First, by \eqref{I_0Lp} and Lemma \ref{Pi_0^e}, for each $p>2$ we have (using Sobolev embedding)
\begin{equation}\label{bL^2}
\|b\|_{L^2(\pl_-SM_e)}\leq C\|\Pi_0^eq\|_{L^p(M_e)}\leq C\|q\|_{L^2(M)}.
\end{equation}

Next we compute ${\rm d}b$. Let $f:=\Pi_0^{e}q$.
Since $\supp(q)\subset M$, one can use \eqref{extentionPi0f2} to deduce that $f\in W^{2,p}(M_e)$ 
for all $p<\infty$ and that $f\in C^\infty(M_e\setminus M)$.
For $z\in \pl_-SM_e$
\[ {\rm d}b(z)= {\rm d}\tau_+^e(z) \chi'_{T_0}(\tau_+^e(z))(I_0^ef)(z)+\chi_{T_0}(
\tau_+^e(z))\Big({\rm d}\tau_+^e(z)\pi_0^*f(S(z))+\int_0^{\tau_+^e(z)}{\rm d}(\pi_0^*f).{\rm d}\varphi_t(z){\rm d}t\Big)\]
and therefore by \eqref{bound_dtau} and \eqref{I_0Lp}, there is $C>0,\theta>0$ independent of $T_0,q$ such that
\begin{equation}\label{dbL^2}
\|{\rm d}b\|_{L^2}\leq CT_0e^{\theta T_0}\|q\|_{W^{1,\infty}(M)}.
\end{equation}

Finally, we compute another derivative of $b$, and write 
\[\begin{split}
 \nabla^2b(z)=& \nabla^2\tau_+^e(z) \chi'_{T_0}(\tau_+^e(z))(I_0^ef)(z)+
({\rm d}\tau_+^e\otimes {\rm d}\tau_+^e)(z) \chi''_{T_0}(\tau_+^e(z))(I_0^ef)(z)\\
& +2\chi'_{T_0}(\tau_+^e(z)) {\rm d}\tau_+^e(z) \otimes \Big({\rm d}\tau_+^e(z)\pi_0^*f(S(z))+\int_0^{\tau_+^e(z)}{\rm d}(\pi_0^*f).{\rm d}\varphi_t(z){\rm d}t\Big)\\
&+ \chi_{T_0}(
\tau_+^e(z))(\nabla^2\tau_+^e(z)\pi_0^*f(S(z))+2 {\rm d}\tau_+^e(z)\otimes ( {\rm d}(\pi_0^*f). {\rm d}S(z)))\\
& + \chi_{T_0}(\tau_+^e(z)) \int_0^{\tau_+^e(z)}\nabla(\varphi_t^*d(\pi_0^*f)) {\rm d}t.
\end{split}\]
Since $\pi_0^*f$ is smooth near $\pl_-SM_e$ and since the bounds \eqref{bound_dtau} hold and $I_0^ef\in L^2$, we obtain that there is $C>0,\theta>0$ independent of $T_0,q$ such that
\begin{equation}\label{d^2bL^2}
\|\nabla^2b\|_{L^2}\leq CT_0e^{2\theta T_0}\|q\|_{W^{1,\infty}(M)}.
\end{equation}
Combining \eqref{d^2bL^2}, \eqref{dbL^2} and \eqref{bL^2}, we get the desired result.
\end{proof}

\begin{proof}[Proof of Theorem \ref{t:potential_recovery_Schroedinger}]
By \eqref{e:main_estimate}, Lemma \ref{l:close_to_trapped}, Lemma \ref{l:estimate_on_b}, and using that $\Vert \Pi_0^e q \Vert_{L^p(M_e)} \leq C\Vert q \Vert_{L^\infty(M)}$ for each $p\in (2,\infty)$, we have that there is $C_0>0$ depending only on $(M,g)$ and $C$ depending on $(M,g,\|q\|_{W^{1,\infty}})$ such that
\begin{equation}
\label{e:goal}
 \int_{M_e} \vert \Pi_0^e (q) \vert^2  \operatorname{d v}_g \leq C  e^{C_0 T_0}  \Vert \Lambda^S_{g,q_1} - \Lambda^S_{g,q_2} \Vert_*^{1/2}  + C e^{\frac{Q}{2} T_0}.
\end{equation}
Then, defining $\alpha := e^{-\frac{Q}{2}T_0}$ and $m := - \tfrac{2C_0}{Q}>0$,
we deduce from \eqref{e:goal} that
\[
 \Vert \Pi_0^e q \Vert_{L^2(M_e)}^2 \leq C \Big( \alpha^m \Vert \Lambda^S_{g,q_1} - \Lambda^S_{g,q_2} \Vert_*^{1/2} + \alpha^{-1} \Big).
\]
We next take $T_0$ sufficiently large so that  $\alpha = \Vert \Lambda^S_{g,q_1} - \Lambda^S_{g,q_2} \Vert_*^{- \frac{1}{2(m+1)}}$. 
With this choice, we obtain
\begin{equation}\label{boundPi_0DN}
\Vert \Pi_0^e q \Vert_{L^2(M_e)}^2 \leq C \Vert \Lambda^S_{g,q_1} - \Lambda^S_{g,q_2} \Vert_*^{\frac{1}{2(m+1)}}.
\end{equation}
This holds in the regime
$$
T_0 =   \frac{2}{Q} \log \left( \Vert \Lambda^S_{g,q_1} - \Lambda^S_{g,q_2} \Vert_*^{\frac{1}{2(m+1)}} \right).
$$ 
Finally, by \eqref{estimatenormalop}, there is $C>0$ depending only on $(M,g)$ such that
\begin{equation}
\label{e:ellipticity}
\Vert q \Vert_{L^2(M)} \leq C \Vert \Pi_0^e q \Vert_{H^1(M_e)}, \quad 
\Vert \Pi_0^e q \Vert_{H^2(M_e)} \leq C \Vert q \Vert_{H^1(M)}.
\end{equation}
Using this, an interpolation estimate, \eqref{boundPi_0DN}, we obtain
\[ \Vert \Pi_0^e q \Vert_{H^1(M_e)}^2  \leq C \Vert \Pi_0^e q \Vert_{L^2(M_e)} \Vert \Pi_0^e q \Vert_{H^2(M_e)} \leq C \Vert \Lambda^S_{g,q_1} - \Lambda^S_{g,q_2} \Vert_*^{\frac{1}{4(m+1)}} \Vert q \Vert_{H^1(M)}.
\]
Finally, by the first bound of \eqref{e:ellipticity}, we conclude that for $q_1,q_2\in \mc{Q}(N_0)$
\[
\Vert q \Vert_{L^2(M)}^2 \leq C \Vert \Lambda^S_{g,q_1} - \Lambda^S_{g,q_2} \Vert_*^{\frac{1}{4(m+1)}}N_0.
\]
This concludes the proof.
\end{proof}

\section{Stable determination of the electrical potential for the wave equation}

In this section we shall prove Theorem \ref{t:potential_recovery_wave}.

\subsection{Preliminary estimates}

We start with a Lemma very similar to Lemma \ref{l:lemma5.1} but now in the context of the wave equation. Its proof follows the lines of that of Lemma \ref{l:lemma5.1}.
\begin{lemma}
\label{l:lemma5.1_w}
Let $q_1,q_2\in W^{1,\infty}(M)$ with $q_1|_{\pl M}=q_2|_{\pl M}$ and set $q:=q_1-q_2$
There exist $C>0$ depending only on $(M,g,\|q_i\|_{W^{1,\infty}(M)})$, $\nu>0$ depending only on 
$\|q_i\|_{L^\infty}$ and $C_0\geq 0$ depending only on $(M,g,\nu)$ such that for any $T>0, \lambda>1$, and for $a_1, a_2 \in H_0^1([0,T], H^2(\widetilde{M}))$ the functions constructed in \eqref{e:solution_transport_equation} with function $b$ given respectively by $b_1, b_2 \in H^2(\partial_-SM)$ satisfying $b_1 \vert_{\mathcal{T}_+^{\pl SM}(T)} = b_2 \vert_{\mathcal{T}_+^{\pl SM}(T)} = 0$, the following estimate holds true:
\[
\left \vert   \int_0^T \int_{\widetilde{M}}  \tilde{q}(x)  a_1( t, x) \overline{a_2( t,x)} \operatorname{dv}_{\tilde{g}}(x) {\rm d}t \right \vert  \leq C e^{C_0 T} \big( \lambda^{-1} +  \lambda \Vert \Lambda^W_{g,q_1} - \Lambda_{g,q_2}^W \Vert_{*,\nu} \big) \Vert a_1 \Vert_* \Vert a_2 \Vert_*
\]
where $\tilde{q}$ is the lift of $q$ to $\widetilde{M}$.
\end{lemma}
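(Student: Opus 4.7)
The plan is to mimic the proof of Lemma \ref{l:lemma5.1} but substitute the wave geometric optics of Lemma \ref{l:geometric_optics_w} and the wave energy estimates of Lemma \ref{l:inhomogeneous_wave_lemma} for their Schrödinger analogues. Concretely, first I would use Lemma \ref{l:geometric_optics_w} to produce a forward solution
\[
u_2(t,x)=G_{2,\lambda}(t,x)+v_{2,\lambda}(t,x)
\]
of $(\partial_t^2+\Delta_g+q_2)u_2=0$ with $u_2(0,\cdot)=\partial_tu_2(0,\cdot)=0$, where $G_{2,\lambda}$ is built from $a_2$ via \eqref{e:prescribed_part}, and similarly a backward solution $u_1(t,x)=G_{1,\lambda}(t,x)+v_{1,\lambda}(t,x)$ of $(\partial_t^2+\Delta_g+\bar q_1)u_1=0$ with $u_1(T,\cdot)=\partial_tu_1(T,\cdot)=0$. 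The hypothesis $b_j|_{\mathcal T_+^{\pl SM}(T)}=0$ ensures that $G_{j,\lambda}$, hence $v_{j,\lambda}$, vanishes on $(0,T)\times\pl M$ outside the fixed time window. Set $f_\lambda:=G_{2,\lambda}|_{\pl M}$ and $g_\lambda:=G_{1,\lambda}|_{\pl M}$, let $v$ solve the wave equation for $q_1$ with Dirichlet data $f_\lambda$ and zero Cauchy data, and put $w:=v-u_2$, which solves the homogeneous BVP with source $q\cdot u_2$.

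Next, integrating by parts twice in $t$ and using Green's identity in $x$ against $\bar u_1$ (the boundary terms at $t=0$ and $t=T$ vanish by the Cauchy conditions of $w$ and $u_1$, and those on $(0,T)\times\pl M$ reduce to a boundary pairing since $w$ and $u_1$ vanish there), I obtain
\[
\int_0^T\!\!\int_M q\,u_2\bar u_1\,{\rm dv}_g{\rm d}t=-\int_0^T\!\!\int_{\pl M}\bigl((\Lambda^W_{g,q_1}-\Lambda^W_{g,q_2})f_\lambda\bigr)\,\bar g_\lambda\,{\rm dv}_{g|_{\pl M}}{\rm d}t.
\]
On the right-hand side, since $f_\lambda,g_\lambda$ are supported in $t\in[0,T]$, the weighted norms absorb the weight into $e^{\nu T}$: explicitly,
\[
\bigl\|f_\lambda\bigr\|_{e^{\nu t}H^1(\R_+\times\pl M)}\le Ce^{C_0T}\lambda\|a_2\|_*,\qquad \bigl\|g_\lambda\bigr\|_{e^{-\nu t}L^2(\R_+\times\pl M)}\le Ce^{C_0T}\|a_1\|_*,
\]
where $C_0$ absorbs $\nu$ and the volume-entropy factor coming from the $\pi_1(M)$-sum (estimated by $N(T)\le Ce^{(h+\epsilon)T}$ as in Lemma \ref{l:geometric_optics_w}). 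The derivative in $t$ of the phase $e^{i\lambda(\psi-t)}$ produces the factor $\lambda$. Hence the boundary term is bounded by $Ce^{C_0T}\lambda\|\Lambda^W_{g,q_1}-\Lambda^W_{g,q_2}\|_{*,\nu}\|a_1\|_*\|a_2\|_*$.

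For the left-hand side, lifting to $\mc F\subset\widetilde M$ and unfolding the two $\pi_1(M)$-sums defining $G_{1,\lambda}$ and $G_{2,\lambda}$, the diagonal part $\gamma_1=\gamma_2$ yields precisely $\int_0^T\!\int_{\widetilde M}\tilde q\,a_1\bar a_2\,{\rm dv}_{\tilde g}{\rm d}t$. The off-diagonal terms are bounded by non-stationary phase exactly as in \eqref{offdiagonal}: two lifts in distinct homotopy classes yield geodesics from $\tilde y$ separated in $T\widetilde M_e$ by at least $\delta e^{-C_0T}$ where $\delta<{\rm injrad}(M_{ee},g)$, so the spatial gradient of $\psi_{\tilde y}(\gamma_1\cdot)-\psi_{\tilde y}(\gamma_2\cdot)$ is bounded below by $\delta e^{-C_0T}$ on the supports, and one integration by parts in $x$ gives an overall bound $C\lambda^{-1}e^{C_0T}\|q\|_{W^{1,\infty}}\|a_1\|_*\|a_2\|_*$ (the $W^{1,\infty}$ norm enters because the IBP lands a derivative on $\tilde q$). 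The cross-terms $\int q\,G_{2,\lambda}\bar v_{1,\lambda}$, $\int q\,v_{2,\lambda}\bar G_{1,\lambda}$, $\int q\,v_{2,\lambda}\bar v_{1,\lambda}$ are bounded by inserting the weights $e^{\pm\nu t}$ and using Cauchy--Schwarz together with \eqref{e:estimates_wave_optics_lemma}, which gives $\|v_{j,\lambda}\|_{e^{\nu t}L^2}\le C\lambda^{-1}e^{(h+\epsilon)T}\|a_j\|_*$; this produces $C\lambda^{-1}e^{C_0T}\|a_1\|_*\|a_2\|_*$ contributions. Collecting everything yields the claimed estimate.

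The main obstacle is the bookkeeping of exponential weights: unlike the Schrödinger case, the pairing of the DN map with the boundary data must be realized in the space $e^{\nu t}H^1_0\times e^{-\nu t}L^2$, so one must check that the compactly supported $f_\lambda,g_\lambda$ lie in the correct weighted spaces with norms bounded up to the factor $e^{\nu T}$ and that this factor is absorbed by $e^{C_0T}$ (with $C_0$ depending only on $\nu$, $h$, and the maximal expansion rate \eqref{maxiexprate}). Everything else is an adaptation of the Schrödinger argument, with the change of variable $s=2\lambda t$ replaced by the bound $\|a_j\|_*$ coming directly from the weighted $W^{k,1}(\R_+;H^\ell(\widetilde M))$ norms \eqref{e:*norms_wave}.
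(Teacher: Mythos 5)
Your proposal is correct and follows essentially the same route as the paper: the same geometric optics solutions (Lemma \ref{l:geometric_optics_w}) for $q_2$ forward and $\bar q_1$ backward, the same Green's-identity reduction to a pairing with $\Lambda^W_{g,q_1}-\Lambda^W_{g,q_2}$, the same diagonal/off-diagonal split under the $\pi_1(M)$-sum with the non-stationary phase bound of \eqref{offdiagonal}, and the same weighted-norm bookkeeping (absorbing $e^{\nu T}$ into $e^{C_0 T}$ when passing from the pairing on $[0,T]\times\pl M$ to the $\|\cdot\|_{*,\nu}$ operator norm). The only difference is cosmetic: the paper's sign on the DN pairing is the opposite of yours, but as the bound is on absolute values this is immaterial.
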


\begin{proof}
We shall proceed as for the Schr\"odinger equation. By Lemma \ref{l:geometric_optics_w}, let $\lambda > 0$, there exist $a_2$, $\psi_{\tilde{y}}$ as in \eqref{e:solution_transport_equation},  such that the for $G_{2,\lambda}(t,x)$ given by \eqref{e:prescribed_part} with $a=a_2$, the solution
\[
u_2(t,x) = G_{2,\lambda}(t,x) + v_{2,\lambda}(t,x)
\]
to the wave equation corresponding to the potential $q_2$
\[\left\{\begin{array}{ll}
( \partial_t^2 + \Delta_g + q_2(x)) u(t,x) & = 0, \quad \text{in } I \times M, \\
 u(0,\cdot) =0, \quad \partial_t u(0,\cdot) & = 0, \quad \text{in } M,
\end{array}\right.\]
satisfies $v_{2,\lambda}(t,x) =0$ for all $(t,x) \in (0,T) \times \partial M$, and (for $\epsilon>0$ small)
\begin{align}
\label{e:estimate_v2_w}
\lambda \Vert v_{2,\lambda} \Vert_{e^{\nu t} L^2(I \times M)} + \Vert \nabla^g v_{2,\lambda} \Vert_{e^{\nu t} L^2(I \times M)} \leq Ce^{(h+\epsilon)T} \Vert a_2 \Vert_*.
\end{align}
We next denote by $f_\lambda$ the restriction of $G_{2,\lambda}$ to $[0,T]\times \partial M$
\[
f_\lambda(t,x):=G_{2,\lambda}(t,x)=  \sum_{\gamma \in \pi_1(M)} a_2(t, \gamma(x)) e^{i \lambda( \psi_{\tilde{y}}(\gamma(x)) - t)}.
\]
Let $v$ be the solution to the boundary value problem
\[
\left \lbrace \begin{array}{ll}
(\partial_t^2+ \Delta_g + q_1) v(t,x) = 0, & (t,x) \in I \times M, \\
v(0,x) = 0, \quad \partial_t v(0,x) = 0, & x \in M, \\
v(t,x) = u_2(t,x) := f_\lambda(t,x), & (t,x) \in I \times \partial M,
\end{array} \right.
\]
and denote $w = v - u_2$. Notice that $w$ solves the following homogeneous boundary value problem for the wave equation:
\[
\left \lbrace \begin{array}{ll}
(\partial_t^2 + \Delta_g + q_1) w(t,x) = q(x) u_2(t,x), & (t,x) \in I \times M, \\
w(0,x) = 0, \quad  \partial_t w(0,x)=  0, & x \in M, \\
w(t,x) = 0, & (t,x) \in I \times \partial M.
\end{array} \right.
\]
Since $q(x) u_2 \in \mathcal{C}([0,T];L^2(M))$ with $u_2(0,\cdot) \equiv 0$, by Lemma \ref{l:inhomogeneous_wave_lemma}, we obtain that
\[
w \in \mathcal{C}^1(I;L^2(M)) \cap \mathcal{C}(I; H_0^1(M)).
\]
On the other hand, we construct a special solution
\[
u_1 \in e^{\nu t} H^1(I; L^2(M)) \cap e^{\nu t} L^2(I; H^1(M))
\]
to the backward wave equation
\[\left\{
\begin{array}{ll}
(\partial_t^2 + \Delta_g + \bar{q}_1(x)) u_1(t,x) & = 0, \quad (t,x) \in (0,T) \times M,  \\
 u_1(T,x) = 0, \quad \partial_t u_1(T,x) & = 0, \quad x \in M,
\end{array}\right.\]
having the special form
\[
u_1(t,x) =  \sum_{\gamma \in \pi_1(M)} a_1( t, \gamma(x)) e^{i \lambda(\psi_{\widetilde{y}}(\gamma(x)) -  t)} + v_{1,\lambda}(t,x),
\]
which corresponds to the electric potential $q_1$, where $v_{1,\lambda}$ satisfies $v_{1,\lambda}(T,\cdot) =  \partial_t v_{1,\lambda}(T,\cdot) = 0$, and for each $\epsilon>0$ there is $C$ independent of $T,\lambda$ such that
\begin{equation}
\label{e:estimate_v1_w}
\lambda \Vert v_{1,\lambda} \Vert_{e^{\nu t}L^2(I \times M)} + \Vert \nabla^g v_{1,\lambda} \Vert_{e^{\nu t}L^2(I \times M)} \leq Ce^{(h+\epsilon)T} \Vert a_1 \Vert_*.
\end{equation}
By integration by parts and Green's formula, we obtain
\[\begin{split}
\int_0^T \int_M  (\partial_t^2+ \Delta_g + q_1) w \overline{u}_1 \operatorname{dv}_g {\rm d}t & = \int_0^\infty \int_M  q u_2 \overline{u}_1 \operatorname{dv}_g {\rm d}t \\
 & = - \int_0^T \int_{\partial M} \partial_{{\rm n}} w \overline{u}_1 \operatorname{dv}_{g|_{\pl M}} {\rm d}t.
\end{split}\]
We therefore obtain 
\[
\int_0^T q u_2 \overline{u}_1 \operatorname{dv}_g {\rm d}t =  \int_0^T \int_{\partial M}(\Lambda^W_{g,q_1} - \Lambda^W_{g,q_2}) (f_\lambda) \overline{g}_\lambda \operatorname{dv}_{g|_{\pl M}} {\rm d}t,
\]
where the boundary data $g_\lambda$ is given by
\[
g_\lambda(t,x) :=  \sum_{\gamma \in \pi_1(M)} a_1( t, \gamma(x)) e^{i \lambda( \psi_{\tilde{y}}(\gamma(x)) -  t)}, \quad (t,x) \in (0,T) \times \partial M.
\]
Using the definition of $u_1$ and $u_2$, we get
\[\begin{split}
 \sum_{\gamma_1, \gamma_2 \in \pi_1(M)} & \int_0^T \int_{\mc{F}} \tilde{q}(x)  a_2( t,\gamma_1(x)) \overline{a_1( t,\gamma_2(x))} \operatorname{dv}_{\tilde{g}}(x) {\rm d}t \\ 
& =  \int_0^T \int_{\partial M} \overline{g}_\lambda (\Lambda^W_{g,q_1} - \Lambda^W_{g,q_2}) f_\lambda \operatorname{dv}_{g|_{\pl M}} {\rm d}t - \int_0^T \int_M q v_{2,\lambda} \overline{v}_{1,\lambda} \operatorname{dv}_g {\rm d}t \\
& \quad - \sum_{\gamma \in \pi_1(M)} \int_0^T \int_{\mc{F}} q  e^{i \lambda(\psi_{\widetilde{y}}(\gamma(\cdot)) -  t)} a_2( t, \gamma(\cdot)) \overline{v}_{1,\lambda} \operatorname{dv}_{\tilde{g}} {\rm d}t \\[0.2cm]
 &  \quad - \sum_{\gamma \in \pi_1(M)} \int_0^T \int_{\mc{F}} q v_{2,\lambda} e^{-i \lambda(\psi_{\tilde{y}}(\gamma(\cdot)) - t)} \overline{a_1( t,\gamma(\cdot))} 
 \operatorname{dv}_{\tilde{g}} {\rm d}t. 
 \end{split}\]
By \eqref{e:estimate_v2_w} and \eqref{e:estimate_v1_w}, for each $\epsilon>0$ small, there is $C>0$ depending on $g,\|q\|_{L^\infty},\epsilon$ such that for all $T,\lambda$
\begin{align*}
& \sum_{\gamma \in \pi_1(M)} \left \vert \int_0^T \int_{\mc{F}} q e^{i \lambda( \psi\circ \gamma - t)} a_2( t,\gamma(\cdot)) \overline{v}_{1,\lambda} \operatorname{dv}_{\tilde{g}} {\rm d}t \right \vert + \left \vert \int_0^T \int_{\mc{F}} q e^{i \lambda( \psi\circ \gamma -  t)} a_1( t,\gamma(\cdot)) \overline{v}_{2,\lambda} \operatorname{dv}_{\tilde{g}} {\rm d}t \right \vert \\
 &  \leq C \sum_{\gamma \in \pi_1(M)} \int_0^T \Vert a_2( t, \cdot) \Vert_{L^2(\gamma(\mc{F}))} \Vert v_{1,\lambda}(t,\cdot) \Vert_{L^2(M)} {\rm d}t \\
 &  \leq Ce^{2(h+\nu+\epsilon)T} \lambda^{-1} \Vert a_2 \Vert_* \Vert a_1 \Vert_*.
\end{align*}
and $\vert \int_0^T \int_M q v_{2, \lambda} \overline{v}_{1,\lambda} \operatorname{dv}_g {\rm d}t\vert \leq Ce^{2(h+\nu+\epsilon)T} \lambda^{-2} \Vert a_1 \Vert_* \Vert a_2 \Vert_*$.
We also have, using that $\forall \epsilon>0$, $\|f_\lambda\|_{H^1([0,T]\times \pl M}\leq C\lambda e^{(h+\epsilon)T}\|a_2\|_{*}$ and $\|g_\lambda\|_{L^2([0,T]\times \pl M}\leq Ce^{(h+\epsilon)T}\|a_1\|_{*}$ for some $C>0$ depending only on the metric $g$ and $\epsilon$,
\[
 \left \vert \int_0^T \int_{\partial M} ( \Lambda^W_{g,q_1} - \Lambda^W_{g,q_2})( f_\lambda) \overline{g}_\lambda \operatorname{dv}_{g|_{\pl M}} {\rm d}t \right \vert \leq 
Ce^{2(h+\nu+\epsilon)T} \lambda \Vert a_1 \Vert_* \Vert a_2 \Vert_* \Vert \Lambda^W_{g,q_1} - \Lambda^W_{g,q_2} \Vert_{*,\nu}.\]
Finally, using that
\[\begin{split}
\sum_{\gamma\in \pi_1(M)} \int_0^T \int_{\mc{F}} \tilde{q}(x)   a_2( t,\gamma(x)) \overline{a_1( t,\gamma(x))}\operatorname{dv}_{\tilde{g}}(x) {\rm d}t
=  \int_0^T \int_{\widetilde{M}} \tilde{q}(x)   a_2( t,x) \overline{a_1( t,x)}\operatorname{dv}_{\tilde{g}}(x) {\rm d}t
\end{split}\]
and bounding the off-diagonal terms as in \eqref{offdiagonal}, the Lemma holds.
\end{proof}
We then obtain the following Lemma comparable to Lemma \ref{l:lemma5.2}:
\begin{lemma}
\label{l:lemma5.2_w}
There exist $C > 0$ depending only on $(M,g,\|q_1\|_{W^{1,\infty}},\|q_2\|_{W^{1,\infty}})$, $\nu$ depending on $(\|q_1\|_{L^\infty},\|q_2\|_{L^\infty})$ and $C_0>0$ depending on $(M,g,\nu)$ such that, for any $T>1$, $b \in H^2(\partial_- SM_e)$ such that $b \vert_{\mathcal{T}_+^{\pl SM}(T)} = 0$, one has
\begin{align*}
\left \vert \int_{\pl_-S_{y}M_e} \int_0^{\tau_+^e(y,v)} q(\exp_{y}(sv)) 
b(y,v) \mu(y,v) {\rm d}s\, {\rm d}\omega_y(v) \right \vert & \\
 & \hspace*{-3cm} \leq C e^{C_0T_0}  \Vert \Lambda^W_{g,q_1} - \Lambda^W_{g,q_2} \Vert_{*,\nu}^{1/2} \Vert b(y,\cdot) \Vert_{H^2(\pl_-S_yM_e)},
\end{align*}
holds uniformly for any $y \in \partial M_e$, where $\mu(y,v) = g({\rm n}_y,v)$. 
\end{lemma}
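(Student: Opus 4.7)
The plan is to mimic the proof of Lemma \ref{l:lemma5.2}, using Lemma \ref{l:lemma5.1_w} in place of Lemma \ref{l:lemma5.1} together with the wave geometric optics. Fix $y\in\pl M_e$ with lift $\tilde y\in\pl\widetilde{M}_e$, pick $\phi\in\mc{C}_c^\infty((0,\varepsilon_0))$ and a smooth cutoff $\chi_T$ equal to $1$ on $[0,T]$ and supported in $[0,T+1]$, and set in geodesic polar coordinates
\begin{align*}
a_1(t,x) &:= \alpha^{-1/4}(r,v)\,\phi(t-r(x))\,\tilde b(\tilde y, v(x)), \\
a_2(t,x) &:= \alpha^{-1/4}(r,v)\,\phi(t-r(x))\,\tilde\mu(\tilde y, v(x))\,\chi_T(\tau_+^e(y,v(x))).
\end{align*}
Both solve the lifted transport equation; they are supported in $t\in(0,T+\varepsilon_0+1)$; and on the support of $\tilde b$ (where $r\leq\tau_+^e<T$) the cutoff $\chi_T(\tau_+^e)$ equals $1$. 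Fubini and the polar Jacobian $\alpha^{1/2}$, together with $\int_0^{T+2}\phi^2(t-r)\,{\rm d}t=\|\phi\|_{L^2(\R)}^2$ (valid for every $r\leq T$), give
\[\int_0^{T+2}\!\!\int_{\widetilde M}\tilde q\,a_1\overline{a_2}\,\operatorname{dv}_{\tilde g}{\rm d}t=\|\phi\|_{L^2}^2\!\int_{\pl_-S_yM_e}\!\!\int_0^{\tau_+^e(y,v)}\!\!q(\exp_y(rv))\,b(y,v)\,\mu(y,v)\,{\rm d}r\,{\rm d}\omega_y(v).\]

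Next I would apply Lemma \ref{l:lemma5.1_w} on the time interval $[0,T+2]$, which is legitimate since the required vanishing $a_j|_{\mc{T}_+^{\pl SM}(T+2)}=0$ is weaker than what we have (as $\mc{T}_+^{\pl SM}(T+2)\subset\mc{T}_+^{\pl SM}(T)$, and $a_2$ vanishes where $\tau_+^e>T+1$ anyway). This bounds the previous display by
\[Ce^{C_0 T}\bigl(\lambda^{-1}+\lambda\|\Lambda^W_{g,q_1}-\Lambda^W_{g,q_2}\|_{*,\nu}\bigr)\|a_1\|_*\|a_2\|_*,\qquad\lambda>1.\]
It then remains to prove $\|a_j\|_*\leq Ce^{C_1 T}\|b(y,\cdot)\|_{H^2(\pl_-S_yM_e)}$ for the norm \eqref{e:*norms_wave}. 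The exponential time weight $e^{\nu t}$ against a function supported in $t\leq T+2$ contributes at most $e^{\nu(T+2)}$; the $x$-derivatives of $\alpha^{-1/4}\phi(t-r)\tilde b$ combined with the polar Jacobian contribute at most $e^{(h+\epsilon)T}$, as volumes in the universal cover only grow at the rate of the volume entropy $h$; and for $a_2$ the extra factor $\chi_T(\tau_+^e(y,\cdot))$ is handled exactly as in Lemma \ref{l:estimate_on_b}, costing a further $e^{2\theta T}$ through the implicit-function formula \eqref{dtau+} and the maximal expansion rate \eqref{maxiexprate}. All these $T$-dependent factors are absorbed into a single $e^{C_0 T}$.

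Finally, I would optimize in $\lambda$ by setting $\lambda=\|\Lambda^W_{g,q_1}-\Lambda^W_{g,q_2}\|_{*,\nu}^{-1/2}$, which lies in $(1,\infty)$ precisely when the DN-difference norm is smaller than $1$; in the opposite regime the estimate is trivial because of the uniform bound on $\|\Lambda^W_{g,q_j}\|_{*,\nu}$ provided by Lemma \ref{l:inhomogeneous_wave_lemma}. This yields $\lambda^{-1}+\lambda\|\Lambda\|_{*,\nu}=2\|\Lambda^W_{g,q_1}-\Lambda^W_{g,q_2}\|_{*,\nu}^{1/2}$ and the claimed inequality, after absorbing the fixed factor $\|\phi\|_{L^2}^{-2}$ into $C$. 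The principal technical obstacle, compared to the simple-metric arguments of \cite{Bellassoued11}, is the last $\|a_2\|_*$ bound: although $\tau_+^e$ is not $C^2$ near the trapped set $K$, the cutoff $\chi_T$ restricts $a_2$ to the region $\{\tau_+^e\leq T+1\}$, on which the hyperbolicity of $K$ is invoked through the expansion-rate bound \eqref{maxiexprate} to control the second derivatives of $\tau_+^e$.
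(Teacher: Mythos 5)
Your proposal follows the paper's proof of this lemma closely: you choose the same $a_1,a_2$ (in particular the amplitude $a_2$ with the cutoff $\chi_T(\tau_+^e)$, exactly as in the proof of Lemma \ref{l:lemma5.2}), pass to geodesic polar coordinates to rewrite the integral as $\|\phi\|_{L^2}^2$ times the weighted X-ray integral, invoke Lemma \ref{l:lemma5.1_w}, and optimize at $\lambda=\|\Lambda^W_{g,q_1}-\Lambda^W_{g,q_2}\|_{*,\nu}^{-1/2}$. The extra remarks you make — that $\lambda>1$ can be assumed because the estimate is trivial otherwise, and that the $\|a_j\|_*$ factors produce at worst $e^{C_1T}$ via the entropy bound on $N(T)$ and the expansion-rate bound on $\nabla^2\tau_+^e$ as in Lemma \ref{l:estimate_on_b} — are correct and simply make explicit what the paper leaves implicit.
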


\begin{proof}
Take $a_1, a_2$ solutions to the transport equation on the universal cover $\widetilde{M}$ as before. Then as in the proof of Lemma \ref{l:lemma5.2}, we obtain 
\[\begin{split}
\int_0^T \int_{\widetilde{M}} \tilde{q}a_1a_2 \operatorname{dv}_{\tilde{g}}{\rm d}t = & \int_0^T \int_{\pl_-S_{y}(M_e)} \int_0^{\tau_+^e(y,v)} q(\exp_{y}(rv)) \phi^2( t - r) b(y,v) \mu(y,v) {\rm d}r {\rm d}\omega_{y}(v) {\rm d}t\\
=&  \|\phi\|_{L^2(\R)}^2 \int_{\pl_-S_{y}(M_e)} \int_0^{\tau_+^e(y,v)} q(\exp_{y}(rv)) b(y,v) \mu(y,v) {\rm d}r {\rm d}\omega_{y}(v).
\end{split}\]
Combining this with Lemma \ref{l:lemma5.1_w} and choosing $\lambda = \Vert \Lambda^W_{g,q_1} - \Lambda^W_{g,q_2} \Vert_{*,\nu}^{-\frac{1}{2}}$ yields the desired result.
\end{proof}

\subsection{Proof of the stability estimate}

Using Lemma \ref{l:lemma5.2_w} there is $C>0$ such that for any $y \in \partial M$ and $b \in H^2(\partial_- SM_e )$ such that $b \vert_{ \mathcal{T}_+^{\pl SM}(T))} = 0$,
\begin{equation}\label{e:first step stab}
\left \vert \int_{\pl_-S_{y}(M_e)} \int_0^{\tau_+(y,v)}  I_0^e(q)(y,v) b(y,v) {\rm d}\mu_{\nu}(y,v) \right \vert 
\leq C  e^{C_0 T}  \Vert \Lambda^W_{g,q_1} - \Lambda^W_{g,q_2} \Vert_*^{1/2} \Vert b(y, \cdot) \Vert_{H^2(\pl_-S_y(M_e))}.
\end{equation}
Now we take a bump function $\chi_{T} \in \mathcal{C}_c^\infty(\R)$ as in Section \ref{sec.Proof stability}  and choose $b$ by \eqref{e:choice_of_b} with $T$ instead of $T_0$

\begin{proof}[Proof of Theorem \ref{t:potential_recovery_wave}]
The proof is then almost the same as the proof of Theorem \ref{t:potential_recovery_Schroedinger}, we then just briefly describe it.
From \eqref{e:first step stab}, the same as \eqref{e:main_estimate} holds with $\Lambda^S_{g,q_1}-\Lambda_{g,q_2}^S$ replaced by $\Lambda^W_{g,q_1}-\Lambda_{g,q_2}^W$, then using Lemmas \ref{l:close_to_trapped} and \ref{l:estimate_on_b} with $\Vert \Pi_0^e q \Vert_{L^p(M_e)} \leq \Vert q \Vert_{L^\infty(M)}$, we obtain 
\[\int_{M_e} \vert \Pi_0^e (q) \vert^2  \operatorname{d v}_g \leq C  e^{C_0 T_0}  \Vert \Lambda^W_{g,q_1} - \Lambda^W_{g,q_2} \Vert_{*,\nu}^{1/2} \Vert q \Vert_{W^{1, \infty}(M)} + C e^{\frac{Q}{2} T} \Vert q \Vert^2_{L^\infty(M)}\]
The last part of the proof is exacly the same as for Theorem \ref{t:potential_recovery_Schroedinger} by choosing $T$ so that $e^{-QT/2}=\|\Lambda^W_{g,q_1}-\Lambda^W_{g,q_2}\|_{*,\nu}^{-\frac{1}{2(m+1)}}$ with $m=-2C_0/Q$.
\end{proof}

\section{Stable determination of the conformal factor for the Schrödinger equation}

In this section we prove Theorem \ref{t:conformal_Schroedinger}. The proof follows the argument of \cite[Thm. 3]{Bellassoued10}, and we indicate the main  modifications.
Let $c \in \mathscr{C}(N_0,k,\epsilon)$, be such that $c =1 $ near the boundary $\partial M$, 
we denote
\begin{equation}\label{defrho}
\begin{gathered}
\rho_0(x)  : = 1 - c(x),\quad  \rho_1(x)  := c^{d/2}(x) - 1,\quad  \rho_2(x)  :=
c^{d/2 - 1}(x) - 1, \quad\\
 \rho(x)  := \rho_2(x) - \rho_1(x) = c^{d/2-1}(x)(1 - c(x)),
\end{gathered}
\end{equation}
where recall that $d = \operatorname{dim}(M)$. We modify the construction of geometric optics solutions \cite[Sect. 6.1]{Bellassoued10} in our geometrical setting similarly to what was discussed in previous sections, using the universal cover $\widetilde{M}$. We consider two solutions $\widetilde{\psi}_1, \widetilde{\psi}_2$, respectively to the lifted eikonal equations $\vert \nabla_{\tilde{g}} \widetilde{\psi}_1 \vert =1$ and $\vert \nabla_{\widetilde{c g}} \widetilde{\psi}_2 \vert = 1$, a solution $a_2$ to the lifted transport equation
\begin{equation}
\label{e:modified_transport_equation}
\partial_t a_2  + {\rm d}a_2(\nabla^{\tilde{g}}\widetilde{\psi}_1) - \frac{a_2}{2} \Delta_{\tilde{g}} \widetilde{\psi}_1 = 0,
\end{equation}
given in geodesical polar coordinates with respect to $g$, as in \eqref{e:solution_transport_equation},  by
\[
 a_2 (t,x) =  \alpha^{-1/4} \phi(t-r(x)) \tilde{b}( \tilde{y},v(x)), \quad \tilde{y} \in \partial SM_e,
\]
for some $b \in H^2(\partial_-SM)$ satisfying that $b \vert_{\mathcal{T}_+^{\pl SM}(T_0)} = 0$, and $\phi \in \mathcal{C}_0^\infty(\R)$ with $\supp \phi \subset (0, \varepsilon_0)$, $\varepsilon_0 > 0$ small, and a solution $a_3$ to the lifted transport equation
\[
\pl_ta_3 +da_3(\nabla^{\widetilde{cg}}\widetilde{\psi}_2)-
\frac{a_3}{2}\Delta_{\widetilde{cg}} \widetilde{\psi}_2  = - \frac{1}{2i} a_2(t,x) \left(1 - \frac{1}{\widetilde{c}} \right) e^{i \lambda( \widetilde{\psi}_1 - \widetilde{\psi}_2)} 
\]
which satisfies (analogously to \cite[(6.10)]{Bellassoued10}) the bound
\begin{equation}
\label{e:estimate_a3}
 \Vert a_3 \Vert_* \leq C \lambda^2 \Vert 1-c \Vert_{\mathcal{C}^2(M)} \Vert a_2 \Vert_*,
\end{equation}
for some $C>0$ depending on $(M,g)$ (it does not depend on $T_0$, notice that the derivatives of the difference between $\widetilde{\psi}_1$ and $\widetilde{\psi}_2$ can be bounded in terms of derivatives of $c$), where $\Vert \cdot \Vert_* = \Vert \cdot \Vert_{H^1([0,T_0]; H^2(\widetilde{M}))}$.  
  Then, \cite[Lemma 6.2]{Bellassoued10} becomes in our setting:
\begin{lemma}
The equation
\[\left\{\begin{array}{ll}
(i \partial_t -\Delta_{cg}) u & = 0, \quad \text{in } (0,T) \times M, \\
u(0,x) & = 0, \quad \text{in } M,
\end{array}\right.\]
has a solution of the form
\[
u_2(t,x) =  \sum_{\gamma \in \pi_1(M)} \left( \frac{1}{\lambda} a_2(2\lambda t, \gamma(x)) e^{i \lambda( \widetilde{\psi}_1(\gamma(x)) - \lambda t)} + a_3(2 \lambda t, \gamma(x); \lambda) e^{i \lambda( \widetilde{\psi}_2(\gamma(x))-\lambda t)} \right) + v_{2,\lambda}(t,x),
\]
which satisfies, for $\lambda \geq T_0/2T$, 
\begin{align*}
\lambda \Vert v_{2,\lambda}(t, \cdot) \Vert_{L^2(M)} + \Vert \nabla^g v_{2,\lambda}(t,\cdot) \Vert_{L^2(M)} + \lambda^{-1} \Vert \partial_t v_{2,\lambda}(t,\cdot) \Vert_{L^2(M)} & \\[0.2cm]
 & \hspace*{-7cm} \leq C e^{C_0 T_0} \big( \lambda^2 \Vert \rho_0 \Vert_{\mathcal{C}^2(M)} + \lambda^{-1} \big) \Vert a_2 \Vert_*.
\end{align*}
for $C>0$ depending on $(M,g,N_0)$ and $C_0>0$ depending on $(M,g)$.
\end{lemma}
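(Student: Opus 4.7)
The plan is to follow the strategy of Lemma \ref{l:geometric_optics}, now with the two-phase WKB ansatz of Bellassoued-Dos Santos Ferreira for the conformal-factor problem on simple manifolds \cite[Lem. 6.2]{Bellassoued10}. The principal part will be
\[
G_{2,\lambda}(t,x):=\sum_{\gamma\in\pi_1(M)}\Big[\tfrac{1}{\lambda}\,a_2(2\lambda t,\gamma(x))\,e^{i\lambda(\widetilde{\psi}_1(\gamma(x))-\lambda t)}+a_3(2\lambda t,\gamma(x);\lambda)\,e^{i\lambda(\widetilde{\psi}_2(\gamma(x))-\lambda t)}\Big],
\]
which is locally finite (each $a_j(s,\cdot)$ has compact support in $\widetilde{M}_e$ for $s\in[0,T_0]$) and $\pi_1(M)$-invariant, hence descends to $M_e$. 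Since $c\equiv 1$ near $\partial M$, the two distance-type phases $\widetilde{\psi}_1,\widetilde{\psi}_2$ coincide in a neighbourhood of $\partial M_e$, and the source driving the transport equation for $a_3$ vanishes wherever $c=1$; together with $\operatorname{supp}\phi\subset(0,\varepsilon_0)$, the condition $b\vert_{\mathcal{T}_+^{\partial SM}(T_0)}=0$, and $2\lambda T\geq T_0$, this forces $G_{2,\lambda}$ to vanish on $\{t=0\}\cup\bigl((0,T)\times\partial M\bigr)$. We then take $v_{2,\lambda}$ to be the solution given by Lemma \ref{l:solution_homogeneous} of
\[
(i\partial_t-\Delta_{cg})v_{2,\lambda}=k,\qquad v_{2,\lambda}(0,\cdot)=0,\qquad v_{2,\lambda}\vert_{(0,T)\times\partial M}=0,
\]
with $k:=-(i\partial_t-\Delta_{cg})G_{2,\lambda}$, so that $u_2:=G_{2,\lambda}+v_{2,\lambda}$ solves the desired Cauchy problem.

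The key algebraic step is the computation of $k$ and the cancellation of its principal part. Applying $i\partial_t-\Delta_{\widetilde{cg}}$ to each summand on $\widetilde{M}$ and sorting by powers of $\lambda$, the $O(\lambda^2)$ coefficient of the first ansatz reduces to $\lambda a_2(1-c^{-1})e^{i\lambda(\widetilde{\psi}_1-\lambda t)}$ because $\vert\nabla^{\widetilde{cg}}\widetilde{\psi}_1\vert^2_{\widetilde{cg}}=c^{-1}$, whereas the $O(\lambda^2)$ coefficient of the second ansatz vanishes thanks to the eikonal equation $\vert\nabla^{\widetilde{cg}}\widetilde{\psi}_2\vert^2_{\widetilde{cg}}=1$. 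The $O(\lambda)$ coefficient of the second ansatz, which is $2i\lambda$ times the $\widetilde{cg}$-transport operator applied to $a_3$, is by construction of the right-hand side of the $a_3$ transport equation equal to $-\lambda a_2(1-c^{-1})e^{i\lambda(\widetilde{\psi}_1-\lambda t)}$, cancelling the principal term. What remains in $k$, per each $\gamma$, is: (i) an $O(\Vert\rho_0\Vert_{\mathcal{C}^2(M)}\Vert a_2\Vert_*)$ residue coming from the mismatch between the $\tilde{g}$-transport equation actually solved by $a_2$ and the $\widetilde{cg}$-transport operator produced by the computation; (ii) the subprincipal term $\lambda^{-1}\Delta_{\widetilde{cg}}a_2$ of size $O(\lambda^{-1}\Vert a_2\Vert_*)$; and (iii) the subprincipal term $\Delta_{\widetilde{cg}}a_3$, controlled by $C\Vert a_3\Vert_*\leq C\lambda^2\Vert\rho_0\Vert_{\mathcal{C}^2(M)}\Vert a_2\Vert_*$ thanks to \eqref{e:estimate_a3}.

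The conclusion is then as in the proof of Lemma \ref{l:geometric_optics}: the time substitution $s=2\lambda t$ produces a factor $\lambda^{-1}$ and the periodisation over $\gamma\in\pi_1(M)$ produces the counting factor $N(T_0)\leq Ce^{(h+\varepsilon)T_0}$, which yields
\[
\Vert k\Vert_{L^1([0,T];L^2(M))}\leq Ce^{(h+\varepsilon)T_0}\,\lambda^{-1}\bigl(\lambda^2\Vert\rho_0\Vert_{\mathcal{C}^2(M)}+\lambda^{-1}\bigr)\Vert a_2\Vert_*.
\]
Inserting this into \eqref{eq3.5DDSF} produces the bound on $\lambda\Vert v_{2,\lambda}\Vert_{L^2}$; choosing $\eta=\lambda^{-1}$ in \eqref{boundH1DDSF} and observing that $\Vert\partial_t k\Vert_{L^1([0,T];L^2(M))}$ is $O(\lambda^2)$ larger than $\Vert k\Vert_{L^1([0,T];L^2(M))}$ (because differentiating $e^{-i\lambda^2 t}$ contributes a factor of $\lambda^2$, which does not cancel in $k$) produces the bound on $\Vert\nabla^g v_{2,\lambda}\Vert_{L^2}$. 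Finally, differentiating the PDE in $t$, so that $\partial_t v_{2,\lambda}$ solves a Schrödinger equation with zero initial and boundary data (using $k(0,\cdot)=0$ since $\phi$ is supported in $(0,\varepsilon_0)$) and source $\partial_t k$, and applying \eqref{eq3.5DDSF} once more yields the claimed bound on $\lambda^{-1}\Vert\partial_t v_{2,\lambda}\Vert_{L^2}$. The main obstacle is not analytical but bookkeeping: organising the two-phase algebra so that the principal $\lambda$-term cancels exactly and only the targeted $O(\lambda^2\Vert\rho_0\Vert_{\mathcal{C}^2(M)})+O(\lambda^{-1})$ residuals survive, the universal-cover summation playing the same role as in Lemma \ref{l:geometric_optics}.
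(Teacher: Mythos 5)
Your proposal is correct and follows essentially the same route the paper intends: the paper does not spell out the proof but refers to \cite[Lemma 6.2]{Bellassoued10} and to the adaptation via the universal cover carried out in Lemma \ref{l:geometric_optics}; you reproduce exactly that adaptation — two-phase WKB ansatz with the cross-phase cancellation of the $O(\lambda)$ term, identification of the three residuals $O(\|\rho_0\|_{\mathcal C^2})$, $O(\lambda^{-1})$, $O(\lambda^2\|\rho_0\|_{\mathcal C^2})$ (the last via \eqref{e:estimate_a3}), periodisation giving $N(T_0)\leq Ce^{(h+\epsilon)T_0}$, and the energy estimates \eqref{eq3.5DDSF}–\eqref{boundH1DDSF} with $\eta=\lambda^{-1}$ plus the time-differentiated equation for $\lambda^{-1}\|\partial_t v_{2,\lambda}\|$. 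The only small inaccuracy is your claim that $G_{2,\lambda}$ vanishes on $(0,T)\times\partial M$: this is neither needed for the construction (only $G_{2,\lambda}(0,\cdot)=0$ in $M$ matters, since $v_{2,\lambda}$ carries the zero Dirichlet data and $u_2|_{\partial M}=G_{2,\lambda}|_{\partial M}$ is the free boundary datum used later) nor generally true, but it does not affect the validity of the argument.
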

The constant $C_0$ above can be written in terms of the max of the 
volume entropies of the metrics $cg$ for $|c-1|_{\mc{C}^1(M)}\leq 1/2$ (this can be bounded by a uniform constant times the volume entropy of $(M,g)$).  

Moreover, reasoning as in previous Sections, \cite[Lemma 6.3]{Bellassoued10} becomes in our setting:
\begin{lemma}
There exist constants $C>0$ depending on $(M,g,N_0)$ and $C_0>0$ depending on $(M,g)$ such that, for any $a_1, a_2 \in H^1([0,T_0];H^2(\widetilde{M}))$ solving \eqref{e:modified_transport_equation} associated to $b_1, b_2 \in H^2(\partial_-SM_e)$ with $b_1 \vert_{\mathcal{T}_+^{\pl SM}(T_0)} = b_2 \vert_{\mathcal{T}_+^{\pl SM}(T_0)} = 0$, the following estimate holds:
\begin{align*}
\Big | \sum_{\gamma\in \pi_1(M)}  \int_0^T \int_M \rho(x) a_1(2\lambda t, \gamma(x))  a_2(2 \lambda t, \gamma(x)) \operatorname{dv}_g(x) {\rm d}t \Big| & \\
 & \hspace*{-7cm} \leq C \lambda^{-1} e^{C_0 T_0} \left( \Vert \rho_0 \Vert_{\mathcal{C}^1(M)} \big( \lambda^{-1} + \lambda^3 \Vert \rho_0 \Vert_{ \mathcal{C}^2(M)} \big) + \lambda \Vert \Lambda^S_g - \Lambda^S_{cg} \Vert_* \right) \Vert a_1 \Vert_* \Vert a_2 \Vert_*.
\end{align*}
for any $\lambda \geq T_0 /2T$.
\end{lemma}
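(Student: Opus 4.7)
The plan is to follow the proof of Lemma~\ref{l:lemma5.1} with the modifications dictated by the conformal setting, as in [Bellassoued10, Lemma 6.3]. First, I would construct a forward geometric optics solution $u_2$ to $(i\partial_t - \Delta_{cg})u = 0$ with $u_2(0,\cdot)=0$ via the previous lemma, having the form
\[
u_2 = \sum_{\gamma \in \pi_1(M)}\!\Bigl(\tfrac{1}{\lambda}a_2(2\lambda t, \gamma(x)) e^{i\lambda(\widetilde{\psi}_1(\gamma(x))-\lambda t)} + a_3(2\lambda t, \gamma(x);\lambda) e^{i\lambda(\widetilde{\psi}_2(\gamma(x))-\lambda t)}\Bigr) + v_{2,\lambda},
\]
and a backward geometric optics solution $u_1$ for $(i\partial_t - \Delta_g)u=0$ with $u_1(T,\cdot)=0$ via Lemma~\ref{l:geometric_optics}, of the form $u_1 = \sum_\gamma a_1(2\lambda t,\gamma(x))e^{i\lambda(\widetilde{\psi}_1(\gamma(x))-\lambda t)} + v_{1,\lambda}$. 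The $a_3$-piece of $u_2$ is precisely tuned so that, by its transport equation, the mismatch between $\Delta_{cg}$ and $\Delta_g$ applied to the leading amplitude of $u_2$ is compensated up to an error proportional to $\rho$ and derivatives of $c$.

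Second, set $f_\lambda := u_2|_{\partial M \times (0,T)}$, let $v$ solve $(i\partial_t - \Delta_g)v = 0$ with $v|_{\partial M}=f_\lambda$ and $v(0)=0$, and define $w := v - u_2$. Since $c\equiv 1$ near $\partial M$, $f_\lambda$ vanishes at $t=0$ and $w$ is a legitimate solution of $(i\partial_t - \Delta_g)w = (\Delta_{cg} - \Delta_g)u_2$ with zero initial and boundary data. Integrating against $\bar{u}_1$ and applying Green's formula together with the vanishing endpoint conditions yields
\[
\int_0^T\!\!\int_M (\Delta_{cg}-\Delta_g)u_2 \,\bar{u}_1\,\mathrm{dv}_g\,\mathrm{d}t = -\int_0^T\!\!\int_{\partial M}(\Lambda^S_{g,0}-\Lambda^S_{cg,0})(f_\lambda)\,\bar{g}_\lambda\,\mathrm{dv}_{g|_{\partial M}}\mathrm{d}t,
\]
where $g_\lambda := u_1|_{\partial M \times (0,T)}$. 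The right-hand side is bounded by $C e^{C_0 T_0}\lambda \|\Lambda^S_g - \Lambda^S_{cg}\|_*\|a_1\|_*\|a_2\|_*$ using the trace estimates $\|f_\lambda\|_{H^1},\, \lambda\|g_\lambda\|_{L^2}\lesssim \lambda e^{(h+\epsilon)T_0}\|\cdot\|_*$ already established.

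Third, I would expand the left-hand side using both WKB ansätze and the conformal identity for the Laplacian; after invoking the eikonal equations $|\nabla^{\tilde g}\widetilde{\psi}_1|_{\tilde g}=1$, $|\nabla^{\widetilde{cg}}\widetilde{\psi}_2|_{\widetilde{cg}}=1$ and the transport equations for $a_2,a_3$, the principal contribution becomes
\[
\frac{C}{\lambda}\sum_{\gamma\in\pi_1(M)}\int_0^T\!\!\int_M \rho(x)\,a_1(2\lambda t,\gamma(x))\,a_2(2\lambda t,\gamma(x))\,\mathrm{dv}_g\,\mathrm{d}t
\]
for a harmless constant coming from the algebra of the conformal identity. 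The remaining terms in the expansion split into: (i) error terms containing $v_{1,\lambda}$ or $v_{2,\lambda}$, bounded by Cauchy--Schwarz and Step~1 by $Ce^{C_0T_0}\lambda^{-1}(\lambda^{-1} + \lambda^3\|\rho_0\|_{\mathcal{C}^2})\|\rho_0\|_{\mathcal{C}^1}\|a_1\|_*\|a_2\|_*$; (ii) off-diagonal terms in the double sum over $\pi_1(M)\times\pi_1(M)$, handled by a single integration by parts using the lower bound $|\nabla^{\tilde g}(\widetilde{\psi}_i\circ\gamma_1 - \widetilde{\psi}_j\circ\gamma_2)|\geq \delta e^{-C_0T_0}$ (exactly as in the last step of Lemma~\ref{l:lemma5.1}); and (iii) cross-terms where the two phases $\widetilde{\psi}_1$ and $\widetilde{\psi}_2$ enter simultaneously, estimated via non-stationary phase combined with \eqref{e:estimate_a3}. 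Collecting all contributions yields the announced bound.

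The main obstacle will be the bookkeeping in the third step: keeping every exponential factor $e^{C_0T_0}$ uniform in $T_0$ while tracking the $\mathcal{C}^1$ and $\mathcal{C}^2$ norms of $\rho_0$ that enter through the amplitude $a_3$ and its transport equation. The off-diagonal phase lower bound relies on the maximal expansion rate \eqref{maxiexprate}, so that $C_0$ depends only on the dynamics of $(M,g)$ and not on $c$, provided $\|1-c\|_{\mathcal{C}^0}\leq\epsilon$ is small enough to guarantee that the maximal expansion rate of $(M,cg)$ is comparable to that of $(M,g)$.
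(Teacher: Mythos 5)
Your proposal follows the same route the paper implicitly takes: the authors give no proof of this lemma, simply writing that ``reasoning as in previous Sections, [Bellassoued10, Lemma~6.3] becomes'' the stated estimate, so the expected argument is indeed to run Bellassoued10's conformal computation lifted to $\widetilde{M}$ and to control the off-diagonal sum over $\pi_1(M)\times\pi_1(M)$ exactly as in Lemma~\ref{l:lemma5.1}. Your Steps~1 and~2 are correct up to signs.

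Two points in Step~3 deserve sharpening. First, the claim that the cross-terms ``where the two phases $\widetilde{\psi}_1$ and $\widetilde{\psi}_2$ enter simultaneously'' are ``estimated via non-stationary phase combined with \eqref{e:estimate_a3}'' conflates two distinct mechanisms. For cross-terms on the \emph{same} deck element $\gamma$, the phase is $\lambda(\widetilde{\psi}_2\circ\gamma - \widetilde{\psi}_1\circ\gamma)$; since $\|\widetilde{\psi}_1-\widetilde{\psi}_2\|_{C^1}\lesssim\|1-c\|_{C^1}$ is small, there is no useful lower bound on the gradient of that phase, and non-stationary phase is of no use. Those terms must be controlled directly by the pointwise size of $a_3$ from \eqref{e:estimate_a3} together with Cauchy--Schwarz, and this is precisely where the factor $\lambda^3\|\rho_0\|_{\mathcal{C}^2(M)}$ comes from. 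Non-stationary phase is only deployed for off-diagonal ($\gamma_1\neq\gamma_2$) terms, where the gradient lower bound $\delta e^{-C_0T_0}$ is a purely geometric separation statement about the two lifted geodesics and does not degenerate with $c$; one does, however, need $\|1-c\|_{\mathcal{C}^1}$ small compared to that gap when the two off-diagonal phases are $\widetilde{\psi}_1\circ\gamma_1$ and $\widetilde{\psi}_2\circ\gamma_2$, which is why your remark about $\|1-c\|_{\mathcal{C}^0}\leq\epsilon$ is relevant but must be formulated at the $\mathcal{C}^1$ level and with the $e^{-C_0T_0}$ scale in mind. Second, calling the coefficient of the leading diagonal term ``a harmless constant coming from the algebra of the conformal identity'' undersells the point: the $\lambda^2$ symbol mismatch of $\Delta_{cg}-\Delta_g$ on the $e^{i\lambda\widetilde{\psi}_1}$-branch produces $(1-c^{-1})$, and turning this into the precise combination $\rho = c^{d/2-1}(1-c)$ of \eqref{defrho} requires tracking the conformal change of the volume form $\operatorname{dv}_{cg}=c^{d/2}\operatorname{dv}_g$ and the conjugation $u\mapsto c^{(d-2)/4}u$; this is the actual content of Bellassoued10's Lemma~6.3, not a normalization. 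Neither issue invalidates your plan, but both must be resolved to produce the stated coefficient structure rather than some other function of $c$.
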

Finally, \cite[Lemma  6.4]{Bellassoued10} becomes in our setting
\begin{lemma}
\label{l:lemma_6.4}
There exists $C >0$ and $C_0>0$ as in previous Lemma such that, for any $b \in H^2(\partial_- SM_e)$ with $b \vert_{\mathcal{T}_+^{\pl SM}(T_0)} = 0$ the following estimate
\begin{align*}
\left \vert \int_{\partial_- SM_e} I_0^e(\rho)(y,v) b(y,v)   {\rm d}\mu_{{\rm n}}(y,v) \right \vert & \\
 & \hspace*{-4cm} \leq C e^{C_0 T_0} \big( (\lambda^{-1} + \lambda^3 \Vert \rho_0 \Vert_{\mathcal{C}^2(M)} ) \Vert \rho_0 \Vert_{\mathcal{C}^1(M)} + \lambda \Vert \Lambda_g^S - \Lambda^S_{cg} \Vert_* \big) \Vert b \Vert_{H^2(\partial_- SM)}
\end{align*}
holds for any $\lambda \geq T_0 /2T$. Here $I_0^e$ is the X-ray transform for $g$ on functions on $M_e$.
\end{lemma}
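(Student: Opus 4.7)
The plan is to follow the reduction used in passing from Lemma~\ref{l:lemma5.1} to Lemma~\ref{l:lemma5.2}. Fix $y\in\pl M_e$ and a lift $\tilde y\in\pl \widetilde M_e$; since $\tilde g$ has no conjugate points, $\widetilde{\exp}_{\tilde y}$ provides global geodesic polar coordinates $x=\widetilde{\exp}_{\tilde y}(r(x)v(x))$ on $\widetilde M_e$. Given $b\in H^2(\pl_-SM_e)$ vanishing on $\mathcal{T}_+^{\pl SM}(T_0)$, I would choose
\[
a_1(t,x):=\alpha^{-1/4}(r,v)\phi(t-r)\tilde b(\tilde y,v),\qquad
a_2(t,x):=\alpha^{-1/4}(r,v)\phi(t-r)\tilde \mu(\tilde y,v)\chi_{T_0}(\tau_+^e(y,v)),
\]
where $\chi_{T_0}\in C_c^\infty(\R_+)$ equals $1$ on $[0,T_0]$, $\phi\in C_c^\infty((0,\varepsilon_0))$, and $\tilde \mu$ is the lift of $\mu$. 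These solve the lifted transport equation \eqref{e:modified_transport_equation} with boundary data $b$ and $\mu\chi_{T_0}(\tau_+^e)$, respectively.

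Applying the previous Lemma to the pair $(a_1,a_2)$, using that $\sum_\gamma\int_M \rho(x)a_1(\cdot,\gamma x)a_2(\cdot,\gamma x)\,\mathrm{dv}_g = \int_{\widetilde M_e}\tilde\rho\,a_1a_2\,\mathrm{dv}_{\tilde g}$ since the $\gamma\in\pi_1(M)$ are $\tilde g$-isometries and $\tilde\rho=\pi^*\rho$ is $\pi_1(M)$-invariant, and then switching to polar coordinates on $\widetilde M_e$ (which cover the ball $B_{\tilde g}(\tilde y,T_0)$ diffeomorphically), the LHS collapses, after the substitution $s=2\lambda t$, to
\[
\int_0^T\int_{\widetilde M_e}\tilde\rho\,a_1a_2\,\mathrm{dv}_{\tilde g}\,\mathrm{d}t \;=\; \frac{\|\phi\|_{L^2}^2}{2\lambda}\int_{\pl_-S_yM_e} I_0^e(\rho)(y,v)\,b(y,v)\mu(y,v)\,\mathrm{d}\omega_y(v),
\]
using that $\chi_{T_0}(\tau_+^e)\cdot b = b$ a.e.\ because $b\vert_{\mathcal{T}_+^{\pl SM}(T_0)}=0$. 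Since $\|a_1\|_*\leq C\|b(y,\cdot)\|_{H^2(\pl_-S_yM_e)}$ uniformly in $y$, and $\|a_2\|_*\leq Ce^{C_0T_0}$ with the exponential cost quantifying the derivatives of $\tau_+^e$ entering through $\chi_{T_0}(\tau_+^e)$ as in the proof of Lemma~\ref{l:estimate_on_b}, multiplying by $2\lambda/\|\phi\|_{L^2}^2$ (which cancels the $\lambda^{-1}$ factor appearing in the previous Lemma) yields the pointwise-in-$y$ estimate
\[
\Big|\int_{\pl_-S_yM_e} I_0^e(\rho)(y,v)\,b(y,v)\mu(y,v)\,\mathrm{d}\omega_y(v)\Big|\leq Ce^{C_0T_0}\Big((\lambda^{-1}+\lambda^3\|\rho_0\|_{\mc C^2})\|\rho_0\|_{\mc C^1}+\lambda\|\Lambda^S_g-\Lambda^S_{cg}\|_*\Big)\|b(y,\cdot)\|_{H^2}.
\]

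Integrating this bound in $y\in\pl M_e$ against $\mathrm{dv}_{\pl M_e}$, so that $\mathrm{d}\mu_{{\rm n}} = \mu\,\mathrm{d}\omega_y\,\mathrm{dv}_{\pl M_e}$, and absorbing $\int_{\pl M_e}\|b(y,\cdot)\|_{H^2(\pl_-S_yM_e)}\,\mathrm{dv}(y)\leq C\|b\|_{H^2(\pl_-SM_e)}$ (by Cauchy--Schwarz combined with a Fubini-type decomposition of the $H^2$ norm) gives the stated estimate. The main technical point is verifying that the pointwise constants $C, C_0$ are uniform in $y\in\pl M_e$: this follows because the polar-coordinate factor $\alpha$ and the exit time $\tau_+^e$ are invariant under deck transformations, so the only $y$-dependence enters through the cutoff $\chi_{T_0}(\tau_+^e(y,\cdot))$, whose $H^2$-cost on $\pl_-SM_e$ was already controlled uniformly by $e^{\theta T_0}$ in the proof of Lemma~\ref{l:estimate_on_b}.
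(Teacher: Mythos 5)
Your proof is correct and follows the route the paper implicitly intends: the paper gives no explicit proof of Lemma~\ref{l:lemma_6.4}, stating only that it is obtained ``reasoning as in previous Sections'', meaning the reduction from Lemma~\ref{l:lemma5.1} to Lemma~\ref{l:lemma5.2} via geodesic polar coordinates around a boundary point, followed by the integration over $y\in\pl M_e$ performed at the start of Section~\ref{sec.Proof stability}. Your choice of $a_1,a_2$, the polar-coordinate collapse of the diagonal sum $\sum_\gamma\int_M \rho\,a_1(\cdot,\gamma x)a_2(\cdot,\gamma x)\,\mathrm{dv}_g=\int_{\widetilde M_e}\tilde\rho\,a_1a_2\,\mathrm{dv}_{\tilde g}$, the change of variables $s=2\lambda t$ producing the factor $\|\phi\|_{L^2}^2/(2\lambda)$, and the cancellation of that $\lambda^{-1}$ against the extra $\lambda^{-1}$ in the preceding conformal analogue of Lemma~\ref{l:lemma5.1} all line up with the potential case. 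The one minor imprecision is the claim $\|a_1\|_*\leq C\|b(y,\cdot)\|_{H^2(\pl_- S_yM_e)}$ with $C$ \emph{uniform} in $T_0$: the time integral alone contributes a factor $\sim T_0^{1/2}$, and the spatial $H^2$ norm in polar coordinates can carry further $T_0$-dependence; but just as in the paper's proof of Lemma~\ref{l:lemma5.2}, any such growth is absorbed into the $e^{C_0T_0}$ prefactor after adjusting $C_0$, so the stated inequality stands. The final Cauchy--Schwarz/Fubini step bounding $\int_{\pl M_e}\|b(y,\cdot)\|_{H^2(\pl_-S_yM_e)}\,\mathrm{dv}(y)$ by $C\|b\|_{H^2(\pl_-SM_e)}$ is also sound.
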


\begin{proof}[Proof of Theorem \ref{t:conformal_Schroedinger}]
We take $b$ as in \eqref{e:choice_of_b} with $q$ replaced by $\rho$. By Lemma \ref{l:lemma_6.4}, Lemma \ref{l:close_to_trapped}, and Lemma \ref{l:estimate_on_b},  there is $C>0,C_0>0$ depending on $(M,g,N_0,\epsilon)$ such that 
\begin{align*}
\Vert \Pi_0^e \rho \Vert_{L^2(M_e)}^2 & \\
 & \hspace*{-2cm} \leq C e^{C_0 T_0} \Big( (\lambda^{-1} + \lambda^3 \Vert \rho_0 \Vert_{\mathcal{C}^2(M)})  \Vert \rho_0 \Vert_{\mathcal{C}^1(M)} + \lambda \Vert \Lambda^S_{g} - \Lambda^S_{cg} \Vert_* \Big) \Vert \rho \Vert_{W^{1,p}(M)} + C e^{\frac{Q}{2} T_0} \Vert \rho \Vert_{L^p(M)}^2
\end{align*}
for some $p>2$. By interpolation,
\begin{align*}
\Vert \Pi_0^e \rho \Vert_{H^1(M_e)}^2 & \leq C \Vert \Pi_0^e \rho \Vert_{L^2(M_e)} \Vert \Pi_0^e \rho \Vert_{H^2(M_e)} \\
 & \leq C e^{\frac{C_0}{2} T_0} \Big( (\lambda^{-1} + \lambda^3 \Vert \rho_0 \Vert_{\mathcal{C}^2(M)})  \Vert \rho_0 \Vert_{\mathcal{C}^1(M)} + \lambda \Vert \Lambda^S_{g} - \Lambda^S_{cg} \Vert_* \Big)^{\frac{1}{2}} \Vert \rho \Vert^{\frac{1}{2}}_{W^{1,p}(M)} \Vert \rho \Vert_{H^1(M)} \\
 & \quad + C e^{\frac{Q}{4} T_0} \Vert \rho \Vert_{L^p(M)} \Vert \rho \Vert_{H^1(M)}.
\end{align*}
We use \eqref{estimatenormalop} to deduce the bound
\begin{align*}
\Vert  \rho \Vert_{L^2(M)}^2 & \leq C e^{C_0T_0/2}  (\lambda^{-1} + \lambda^3 \Vert \rho_0 \Vert_{\mathcal{C}^2(M)})^{\frac{1}{2}}  \Vert \rho_0 \Vert^{\frac{1}{2}}_{\mathcal{C}^1(M)} \Vert \rho \Vert^{\frac{1}{2}}_{W^{1,p}(M)} \Vert \rho \Vert_{H^1(M)} \\
 & \quad  + C e^{\frac{C_0}{2} T_0} \lambda^{\frac{1}{2}} \Vert \rho_0 \Vert_{\mathcal{C}^1(M)}^{\frac{3}{2}} \Vert \Lambda_{g}^S - \Lambda_{cg}^S \Vert_*^{\frac{1}{2}} + C e^{\frac{Q}{4} T_0} \Vert \rho \Vert_{L^p(M)} \Vert \rho \Vert_{H^1(M)}.
\end{align*}
Taking 
\[
\lambda = \frac{T_0}{2T} \cdot \left( \frac{ \Vert \rho_0 \Vert_{\mathcal{C}^2(M)}}{2N_0}\right)^{-\frac{1}{4}},
\] 
we obtain for $T_0>0$ large 
\begin{align*}
\Vert  \rho \Vert_{L^2(M)}^2 & \leq C  e^{C_0 T_0}(  \Vert \rho_0 \Vert^{\frac{17}{8}}_{\mathcal{C}^2(M)} +  \Vert \rho_0 \Vert^{\frac{11}{8}}_{\mathcal{C}^2(M)} \Vert \Lambda^S_{g} - \Lambda^S_{cg} \Vert_*^{\frac{1}{2}}) + C e^{\frac{Q}{4} T_0} \Vert \rho \Vert_{L^p(M)} \Vert \rho \Vert_{H^1(M)} \\
 & \leq C e^{C_0T_0}  (\epsilon^{\ell} \Vert \rho_0 \Vert^{\frac{33}{16}}_{\mathcal{C}^2(M)} + \Vert \Lambda^S_{g} - \Lambda^S_{cg} \Vert_*^{\frac{1}{2}}) + C e^{\frac{Q}{4} T_0} \Vert \rho \Vert_{L^p(M)} \Vert \rho \Vert_{H^1(M)},
\end{align*}
 where $C$ depends on $N_0$, $\ell = \frac{1}{16}(1 - \frac{2}{k})$, and we have used the interpolation estimate
\[
 \Vert \rho_0 \Vert_{\mathcal{C}^2(M)} \leq C \Vert \rho_0 \Vert_{\mathcal{C}(M)}^{1 - \frac{2}{k}} \leq C \epsilon^{1- \frac{2}{k}}.
\]
By interpolation, choosing $k> \max(s,s')$ large enough, we have for some $\delta,\delta'>0$ small
\begin{align}
\label{e:interpolation_I}
\Vert \rho_0 \Vert_{\mathcal{C}^2(M)} & \leq C \Vert \rho_0 \Vert_{H^{\frac{d}{2} + 2 + \delta}(M)} \leq C \Vert \rho_0 \Vert_{L^2(M)}^{32/33 + \delta} \Vert \rho_0 \Vert_{H^s(M)}^{1/33- \delta} \leq C \Vert \rho_0 \Vert_{L^2(M)}^{32/33+ \delta}, \\
\label{e:interpolation_II}
\Vert \rho \Vert_{H^1(M)}  & \leq \Vert \rho \Vert_{L^2(M)}^{1- \delta'} \Vert \rho \Vert^{\delta'}_{H^{s'}(M)} \leq C \Vert \rho \Vert_{L^2(M)}^{1- \delta'}, \\
\label{e:interpolation_III}
\Vert \rho \Vert_{L^p(M)} & \leq \Vert \rho \Vert_{L^2(M)}^{1 -\delta'} \Vert \rho \Vert_{L^{p'}(M)}^{\delta'} \leq C \Vert \rho \Vert_{L^2(M)}^{1 -\delta'},
\end{align}
where $C>0$ depends on $N_0$.
Thus, using that $C^{-1} \Vert \rho_0 \Vert_{L^2} \leq \Vert \rho \Vert_{L^2} \leq C \Vert \rho_0 \Vert_{L^2}$, we see that there is $C>0$ depending on $(M,g,N_0,k)$ and $C_0>0$ depending on $(M,g)$ such that
\[
\Vert  \rho_0 \Vert_{L^2(M)}^2 \leq C  e^{C_0 T_0}  \epsilon^{\ell} \Vert \rho_0 \Vert^{2+ \delta}_{L^2(M)} + C  e^{C_0 T_0} \Vert \Lambda^S_{g} - \Lambda^S_{cg} \Vert_*^{\frac{1}{2}} + C e^{\frac{Q}{4} T_0} \Vert \rho_0 \Vert_{L^2(M)}^{2- 2 \delta'}.
\]
We finally take $T_0$ sufficiently large so that
\[
C e^{\frac{Q}{4} T_0} < \frac{1}{2} \Vert \rho_0 \Vert_{L^2}^{2 \delta'}.
\]
This allows us to absorb the third term of the right-hand-side in the left-hand side:
\[
\Vert  \rho_0 \Vert_{L^2(M)}^2 \leq C  \epsilon^{\ell} \Vert \rho_0 \Vert_{L^2(M)}^{-2m \delta'}  \Vert \rho_0 \Vert^{2+ \delta}_{L^2(M)} + C  \Vert \rho_0 \Vert_{L^2(M)}^{-2m \delta'} \Vert \Lambda^S_{g} - \Lambda^S_{cg} \Vert_*^{\frac{1}{2}},
\]
for $ m=C_0/4|Q| > 0$. Choosing $\delta >2m \delta'$ and taking $\epsilon$ sufficiently small, we can absorb the first term of the right-hand-side into the left-hand-side. Therefore, there exists $\beta > 0$ depending on $(M,g)$ and $C>0$ depending on $(M,g,N_0,k,\epsilon)$ such that
\[
\Vert  \rho_0 \Vert_{L^2(M)} \leq C  \Vert \Lambda_{g}^S - \Lambda_{cg}^S \Vert_*^{\beta},
\]
and the proof is complete.
\end{proof}

\section{Stable determination of the conformal factor for the wave equation}

In this section we sketch the proof of Theorem \ref{t:conformal_wave}. We just indicate the modifications with respect to the proof of \cite[Thm. 2]{Bellassoued11}.  We will assume that the conformal factor satisfies $c \in \mathscr{C}(N_0,k,\epsilon)$ be such that $c =1 $ near the boundary $\partial M$, and we use the notation \eqref{defrho}.
First, \cite[Lemma 6.2]{Bellassoued11} becomes in our setting:
\begin{lemma}
The equation
\[\left\{\begin{array}{ll}
(\partial_t^2 + \Delta_{cg}) u & = 0, \quad \text{in } (0,T) \times M, \\
u(0,x) & = 0, \quad \text{in } M,
\end{array}\right.\]
has a solution of the form
\[
u_2(t,x) =  \sum_{\gamma \in \pi_1(M)} \left( \frac{1}{\lambda} a_2( t, \gamma(x)) e^{i \lambda( \psi_1(\gamma(x)) -  t)} + a_3( t, \gamma(x);\lambda) e^{i \lambda( \psi_2(\gamma(x))- t)} \right) + v_{2,\lambda}(t,x),
\]
such that there is $\nu_0>0$ (given by Lemma \ref{l:inhomogeneous_wave_lemma}) so that for all $\nu>\nu_0$, there is $C>0$ depending only on $(M, g,N_0,\nu)$, and $C_0>0$ depending only on $(M,g,\nu)$ so that $\forall \lambda >1$ 
\begin{align*}
\lambda \Vert v_{2,\lambda} \Vert_{e^{\nu t}L^2(\R_+\times M)} + \Vert \nabla^g v_{2,\lambda} \Vert_{e^{\nu t} L^2(\R_+\times M)} + \lambda^{-1} \Vert \partial_t v_{2,\lambda} \Vert_{e^{\nu t} L^2(\R_+\times M)} & \\
 & \hspace*{-7cm} \leq C e^{C_0T} \big( \Vert \rho_0 \Vert_{\mathcal{C}^2(M)} \lambda^2 + \lambda^{-1} \big) \Vert a_2 \Vert_*.
\end{align*}

\end{lemma}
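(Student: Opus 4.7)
The plan is to adapt the proof of Lemma \ref{l:geometric_optics_w} to the two-phase ansatz of the statement, mimicking the Schrödinger version immediately preceding; the role of Lemma \ref{l:solution_homogeneous} is now played by Lemma \ref{l:inhomogeneous_wave_lemma}. Working on the universal cover $\widetilde{M}_e$, set
\[
G_\lambda(t,x) := \sum_{\gamma\in\pi_1(M)}\Bigl(\tfrac{1}{\lambda}a_2(t,\gamma(x))e^{i\lambda(\widetilde\psi_1(\gamma(x))-t)} + a_3(t,\gamma(x);\lambda)e^{i\lambda(\widetilde\psi_2(\gamma(x))-t)}\Bigr),
\]
which, by the support condition $b|_{\mc{T}_+^{\pl SM}(T_0)}=0$ and the choice $T_0>1+\operatorname{Diam}(M_e)$, descends to a $\pi_1(M)$-invariant function on $\R_+\times M_e$ vanishing on $(0,T)\times\partial M$. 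Let $v_{2,\lambda}$ be the solution, supplied by Lemma \ref{l:inhomogeneous_wave_lemma} applied with metric $cg$ and zero potential, of $(\partial_t^2+\Delta_{cg})v_{2,\lambda}= k := -(\partial_t^2+\Delta_{cg})G_\lambda$ on $\R_+\times M$ with zero initial and Dirichlet data; the threshold $\nu_0$ of that lemma depends only on the $L^\infty$-norm of the potential, which is zero here, so it is uniform over $c\in\mathscr{C}(N_0,k,\epsilon)$.

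The algebraic core is the pointwise identity
\[
(\partial_t^2+\Delta_{cg})(A e^{i\lambda(\phi-t)}) = e^{i\lambda(\phi-t)}\bigl[(\partial_t^2+\Delta_{cg})A + \lambda^2 A(|d\phi|_{cg}^2-1) - 2i\lambda(\partial_t A + dA(\nabla^{cg}\phi)-\tfrac{A}{2}\Delta_{cg}\phi)\bigr].
\]
Applied termwise on $\widetilde M$: for the first summand ($A=a_2/\lambda$, $\phi=\widetilde\psi_1$), the relation $|d\widetilde\psi_1|_g^2=1$ gives $|d\widetilde\psi_1|_{cg}^2=\widetilde c^{-1}$, so the $\lambda^2$-term survives at order $\lambda$ as $\lambda a_2(\widetilde c^{-1}-1)e^{i\lambda(\widetilde\psi_1-t)}$. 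For the second summand ($A=a_3$, $\phi=\widetilde\psi_2$), the eikonal $|d\widetilde\psi_2|_{cg}=1$ kills the $\lambda^2$-term, and by the inhomogeneous transport equation for $a_3$ (together with $\widetilde c^{-1}-1=-(1-1/\widetilde c)$), the $\lambda$-term $-2i\lambda(\partial_t a_3 + da_3(\nabla^{cg}\widetilde\psi_2)-\tfrac{a_3}{2}\Delta_{cg}\widetilde\psi_2)e^{i\lambda(\widetilde\psi_2-t)}$ equals exactly $-\lambda a_2(\widetilde c^{-1}-1)e^{i\lambda(\widetilde\psi_1-t)}$, cancelling the surviving residue from the first summand. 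What is left is of the form
\[
k = \sum_{\gamma\in\pi_1(M)}\bigl[\tfrac{1}{\lambda}k_0(t,\gamma(x))e^{i\lambda(\widetilde\psi_1(\gamma(x))-t)} + k_1(t,\gamma(x))e^{i\lambda(\widetilde\psi_2(\gamma(x))-t)}\bigr],
\]
where $k_0 = (\partial_t^2+\Delta_{\widetilde{cg}})a_2$ plus commutator corrections of the form $(\widetilde c^{-1}-1)da_2(\nabla^{\widetilde g}\widetilde\psi_1)$ and $a_2(\Delta_{\widetilde{cg}}\widetilde\psi_1 - \Delta_{\widetilde g}\widetilde\psi_1)$ (each controlled pointwise by $\|1-c\|_{\mc{C}^1}$ times derivatives of $a_2$), and $k_1 = (\partial_t^2+\Delta_{\widetilde{cg}})a_3$.

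To bound $v_{2,\lambda}$, I would repeat the trick from the proof of Lemma \ref{l:geometric_optics_w}: set $w_\lambda(t,x):=\int_0^t v_{2,\lambda}(s,x)\,ds$, which solves the same boundary-value problem with source $k^{(1)}(t,x):=\int_0^t k(s,x)\,ds$. Integrating by parts in $s$ using $\partial_s e^{-i\lambda s} = -i\lambda e^{-i\lambda s}$ and the vanishing of $k_0, k_1$ at $s=0$ exchanges a factor $\lambda$ for a $\partial_s$-derivative on the amplitudes, so $\|k^{(1)}\|$ gains an extra $1/\lambda$ relative to the naive estimate of $\|k\|$. Applying the first line of \eqref{eq:boundwave} of Lemma \ref{l:inhomogeneous_wave_lemma} to $w_\lambda$, rewriting $v_{2,\lambda}=\partial_t w_\lambda$, taking weighted $e^{-\nu t}L^2(\R_+)$-norms with $\nu>\nu_0$, using Minkowski's integral inequality, and bounding $\sum_\gamma \|\cdot\|_{L^2(\gamma(\mc{F}))} \leq N(T)\|\cdot\|_{L^2(\widetilde M)} \leq Ce^{(h+\epsilon)T}\|\cdot\|_{L^2(\widetilde M)}$ as in that proof, one obtains
\[
\lambda\|v_{2,\lambda}\|_{e^{\nu t}L^2} + \|\nabla^g v_{2,\lambda}\|_{e^{\nu t}L^2} + \lambda^{-1}\|\partial_t v_{2,\lambda}\|_{e^{\nu t}L^2} \leq Ce^{C_0 T}\bigl(\lambda^{-1}\|a_2\|_* + \|a_3\|_*\bigr),
\]
with $C_0$ absorbing $h+\epsilon$, $\nu-\nu_0$, and the $\|1-c\|_{\mc{C}^1}$-dependent factors. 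Substituting the bound $\|a_3\|_*\leq C\lambda^2\|1-c\|_{\mc{C}^2(M)}\|a_2\|_*$ from \eqref{e:estimate_a3} yields the stated inequality. The main obstacle is the careful bookkeeping of the algebraic cancellation between the two phases together with the oscillatory integration-by-parts gain of $1/\lambda$ in time, all under the weighted $e^{\nu t}$ norms; however the structure is essentially parallel to the single-phase version of Lemma \ref{l:geometric_optics_w}.
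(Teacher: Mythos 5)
The paper does not actually write out a proof of this lemma (it is stated as an adaptation of Bellassoued's Lemma 6.2 to be obtained by the same modifications used in Lemma \ref{l:geometric_optics_w}), and your proposal is a correct reconstruction of the intended argument: the two-phase algebraic cancellation between the residual $\lambda^2$-term of the $\widetilde\psi_1$-phase and the $\lambda$-term produced by the inhomogeneous transport equation for $a_3$ is computed correctly, the application of Lemma \ref{l:inhomogeneous_wave_lemma} with metric $cg$ (uniform in $c$ via $N_0$) is legitimate, the $w_\lambda=\int_0^t v_{2,\lambda}\,ds$ trick with a time integration by parts to gain a $\lambda^{-1}$ is exactly what the paper does in the potential case, and the $N(T)\le Ce^{(h+\epsilon)T}$ counting handles the deck sum.

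One bookkeeping slip worth flagging: you fold the commutator corrections $(\tilde c^{-1}-1)\,da_2(\nabla^{\tilde g}\tilde\psi_1)$ and $a_2(\Delta_{\widetilde{cg}}\tilde\psi_1-\Delta_{\tilde g}\tilde\psi_1)$ into $k_0$ under a $\lambda^{-1}$ prefactor, but they actually appear at order $\lambda^0$ in $k$ (they come from the $-2i\lambda(\cdots)$ bracket acting on $A=a_2/\lambda$, so the two $\lambda$'s cancel). Your intermediate estimate $\le Ce^{C_0T}(\lambda^{-1}\Vert a_2\Vert_* + \Vert a_3\Vert_*)$ therefore misses a term of size $Ce^{C_0T}\Vert\rho_0\Vert_{\mathcal C^1(M)}\Vert a_2\Vert_*$. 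This does not damage the final inequality: since $\Vert\rho_0\Vert_{\mathcal C^1}\le \Vert\rho_0\Vert_{\mathcal C^2}\le \lambda^2\Vert\rho_0\Vert_{\mathcal C^2}$ for $\lambda>1$, it is absorbed by the $\lambda^2\Vert\rho_0\Vert_{\mathcal C^2(M)}\Vert a_2\Vert_*$ contribution already present from $\Vert a_3\Vert_*$ via \eqref{e:estimate_a3}.
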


Moreover, \cite[Lemma 6.3]{Bellassoued11} is replaced by:
\begin{lemma}
There exist constants $C_0 >0$ depending on $(M,g,\nu)$ and $C>0$ depending on $(M,g,N_0,\nu)$ such that, for any $a_1, a_2 \in H^1([0,T];H^2(\widetilde{M}))$ solving \eqref{e:modified_transport_equation} associated to $b_1, b_2 \in H^2(\partial_-SM_e)$ with $b_1 \vert_{\mathcal{T}_+^{\pl SM}(T)} = b_2 \vert_{\mathcal{T}_+^{\pl SM}(T)} = 0$, the following estimate holds:
\begin{align*}
\left \vert \sum_{\gamma \in \pi_1(M)}  \int_0^T \int_M \rho(x) a_1( t, \gamma(x)) \overline{ a_2( t, \gamma(x))} \operatorname{dv}_g(x) {\rm d}t \right \vert & \\ 
 & \hspace*{-8.2cm} \leq C e^{C_0 T } \Vert \rho_0 \Vert_{\mathcal{C}^1(M)} \big( \lambda^{-1} + \lambda^3 \Vert \rho_0 \Vert_{\mathcal{C}^2(M)}  \big) \Vert a_1 \Vert_* \Vert a_2 \Vert_* + Ce^{C_0 T} \lambda \Vert \Lambda^W_g - \Lambda^W_{cg} \Vert_{*,\nu}  \Vert a_1 \Vert_* \Vert a_2 \Vert_* ,
\end{align*}
for all $\lambda > 1$.
\end{lemma}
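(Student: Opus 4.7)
The plan is to mirror the proof of Lemma~\ref{l:lemma5.1_w} from the wave/potential setting, adapting the two-phase WKB construction that was already introduced in the previous lemma to absorb the leading order conformal error. First I would build $u_2$ as the solution provided by the previous lemma to $(\partial_t^2 + \Delta_{cg})u_2 = 0$ with vanishing Cauchy data, of the form
\[
u_2(t,x) = \sum_{\gamma\in\pi_1(M)}\Big(\tfrac{1}{\lambda}a_2(t,\gamma(x))e^{i\lambda(\psi_1(\gamma x)-t)} + a_3(t,\gamma(x);\lambda)e^{i\lambda(\psi_2(\gamma x)-t)}\Big) + v_{2,\lambda}(t,x),
\]
together with a backward single-phase WKB solution $u_1 = \sum_\gamma a_1(t,\gamma x) e^{i\lambda(\psi_1(\gamma x) - t)} + v_{1,\lambda}$ to $(\partial_t^2+\Delta_g)u_1 = 0$ with $u_1(T,\cdot)=\partial_t u_1(T,\cdot)=0$, using Lemma~\ref{l:geometric_optics_w}. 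The amplitude $a_3$ is chosen exactly to kill the principal conformal symbol and satisfies \eqref{e:estimate_a3}, while $v_{1,\lambda},v_{2,\lambda}$ obey the weighted $L^2$ bounds recalled in the two preceding lemmas.

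Next I would introduce the solution $v$ of $(\partial_t^2 + \Delta_g)v = 0$ with zero Cauchy data and boundary value $f_\lambda := u_2|_{(0,T)\times\partial M}$, and set $w := v - u_2$, which satisfies $(\partial_t^2+\Delta_g)w = (\Delta_{cg}-\Delta_g)u_2$ with vanishing Cauchy data and Dirichlet trace. Pairing this equation with $\overline{u_1}$ and integrating by parts in $(0,T)\times M$ using Green's identity (justified by Lemma~\ref{l:inhomogeneous_wave_lemma}) produces the fundamental identity
\[
\int_0^T\!\!\int_M (\Delta_{cg}-\Delta_g)u_2 \,\overline{u_1}\,\mathrm{dv}_g\,\mathrm{d}t = -\int_0^T\!\!\int_{\partial M} (\Lambda^W_{g,0}-\Lambda^W_{cg,0})f_\lambda\,\overline{g_\lambda}\,\mathrm{dv}_{g|_{\partial M}}\,\mathrm{d}t,
\]
with $g_\lambda := u_1|_{(0,T)\times\partial M}$. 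Using the conformal relation between $\Delta_{cg}$ and $\Delta_g$ and the definition \eqref{defrho} of $\rho = c^{d/2-1}(1-c)$, the left-hand side expands (after periodization over $\pi_1(M)$ as in Lemma~\ref{l:lemma5.1_w}) into the diagonal contribution
\[
\frac{1}{\lambda}\sum_{\gamma\in\pi_1(M)}\int_0^T\!\!\int_M \rho(x)\, a_1(t,\gamma x)\overline{a_2(t,\gamma x)}\,\mathrm{dv}_g\,\mathrm{d}t,
\]
plus three classes of remainder terms.

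The three remainders to control, in order, are: the DN-map term, which by the weighted trace bounds $\|f_\lambda\|_{e^{\nu t}H^1([0,T]\times\partial M)}\leq C\lambda e^{C_0 T}\|a_2\|_*$ and $\|g_\lambda\|_{e^{\nu t}L^2([0,T]\times\partial M)}\leq Ce^{C_0 T}\|a_1\|_*$ is bounded by $C e^{C_0 T}\lambda\|\Lambda^W_{g,0}-\Lambda^W_{cg,0}\|_{*,\nu}\|a_1\|_*\|a_2\|_*$; the WKB-remainder cross terms, controlled by combining \eqref{e:estimate_a3} with the weighted $L^2$-bounds on $v_{1,\lambda},v_{2,\lambda}$ and the factorisation $\|\rho\|_{\mathcal{C}^1}\leq C\|\rho_0\|_{\mathcal{C}^1}$ to yield $C e^{C_0 T}\|\rho_0\|_{\mathcal{C}^1}(\lambda^{-1}+\lambda^3\|\rho_0\|_{\mathcal{C}^2})\|a_1\|_*\|a_2\|_*$; and the off-diagonal terms $\gamma_1\neq\gamma_2$ in the double $\pi_1(M)$-sum, handled exactly as in \eqref{offdiagonal} by one integration by parts using the lower bound $|\nabla^{\tilde g}(\psi_{\tilde y}-\psi_{\tilde y}\circ\gamma)|\geq \delta e^{-C_0 T}$ provided by the maximal expansion rate \eqref{maxiexprate}. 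Summing these three bounds yields the claimed estimate.

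The main obstacle will be the second class of remainders: one must verify that the choice of $a_3$, which is designed to cancel the $\lambda^2(1-c^{-1})a_2 e^{i\lambda\psi_1}$ term but has $\|a_3\|_*\leq C\lambda^2\|\rho_0\|_{\mathcal{C}^2}\|a_2\|_*$, does not reintroduce a dominant contribution after pairing against $u_1$ and integration against $\rho$. The gain comes from the extra factor $\|\rho_0\|_{\mathcal{C}^1}$ produced by $\rho$ itself, giving the product $\|\rho_0\|_{\mathcal{C}^1}\cdot\lambda^3\|\rho_0\|_{\mathcal{C}^2}$; this bilinear structure is essential and one must be careful to distribute derivatives to avoid losing this factor when handling the boundary contributions produced by the integration by parts between $\partial_t^2$ on $w$ and the fast oscillations of $u_1$.
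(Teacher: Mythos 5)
Your overall architecture — two-phase WKB for the conformal metric, a backward single-phase solution for $g$, the substitution $w=v-u_2$, Green's identity, and a division of the error into DN/remainder/off-diagonal terms handled as in Lemma~\ref{l:lemma5.1_w} — is the right skeleton, and it is what the paper intends when it says the lemma ``replaces'' \cite[Lemma~6.3]{Bellassoued11}. However there is a genuine error in the key expansion step, and it is not cosmetic.

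You claim that the left side of the Green identity, $\int_0^T\int_M(\Delta_{cg}-\Delta_g)u_2\,\overline{u_1}\,\operatorname{dv}_g\,{\rm d}t$, expands into the diagonal contribution
\begin{equation*}
\frac{1}{\lambda}\sum_{\gamma}\int_0^T\int_M \rho\, a_1\overline{a_2}\,\operatorname{dv}_g\,{\rm d}t
\end{equation*}
plus remainders. That $\lambda^{-1}$ prefactor is off by a factor of $\lambda^{2}$. When the operator $\Delta_{cg}-\Delta_g$ hits the leading WKB piece $\lambda^{-1}a_2\,e^{i\lambda(\psi_1-t)}$ of $u_2$, the principal contribution comes from the phase: since $|\nabla^{cg}\psi_1|^2_{cg}=c^{-1}$ while $|\nabla^g\psi_1|^2_g=1$, one obtains $\lambda^2\big(|\nabla^{cg}\psi_1|^2_{cg}-|\nabla^g\psi_1|^2_g\big)\cdot\lambda^{-1}a_2 = \lambda\,(c^{-1}-1)a_2$, i.e.\ a factor of $\lambda$, not $\lambda^{-1}$. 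In the wave case there is no subsequent change of variables $s=2\lambda t$ (the amplitudes are $a_j(t,\cdot)$, unlike the Schr\"odinger $a_j(2\lambda t,\cdot)$), so no compensating $\lambda^{-1}$ appears; you may be importing that factor from the Schr\"odinger argument. Pairing with $\overline{a_1\,e^{i\lambda(\psi_1-t)}}$ and using $c^{-1}-1=c^{-d/2}\rho$ (with $c^{-d/2}-1=O(\|\rho_0\|)$ absorbed in the $\lambda^3\|\rho_0\|^2$ remainder) the correct diagonal contribution is $\lambda\sum_\gamma\int\rho\,a_1\overline{a_2}$, after which one divides by $\lambda$ to reach the stated inequality. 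With your $\lambda^{-1}$ you would instead need to multiply by $\lambda$ to isolate $\int\rho\,a_1\overline{a_2}$, which turns the $\lambda^2\|\rho_0\|^2\|\Lambda\|$-sized boundary remainder from $a_3$ into $\lambda^4\|\rho_0\|\|\Lambda\|$ — not absorbable by $\lambda\|\Lambda\|+\lambda^3\|\rho_0\|^2$ for arbitrary $\lambda>1$, so the final bound would not close.

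Two smaller points. First, the trace estimate $\|f_\lambda\|_{e^{\nu t}H^1}\lesssim \lambda e^{C_0T}\|a_2\|_*$ is not correct for the $\lambda^{-1}a_2 e^{i\lambda(\psi_1-t)}$ piece (there the $\lambda$ from the time derivative cancels the $\lambda^{-1}$, giving $\|a_2\|_*$); the genuinely dominant trace contribution comes from $a_3\,e^{i\lambda(\psi_2-t)}$ and is of size $\lambda\|a_3\|_*\lesssim\lambda^3\|\rho_0\|_{\mathcal C^2}\|a_2\|_*$, which is then absorbed using the boundedness of $\|\Lambda^W_g-\Lambda^W_{cg}\|_{*,\nu}$ together with an AM--GM split between the $\lambda\|\Lambda\|$ and $\lambda^3\|\rho_0\|^2$ terms — this absorption step is missing from your argument. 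Second, the reduction of $(\Delta_{cg}-\Delta_g)u_2$ to a multiplication by $\rho=c^{d/2-1}(1-c)$ (and not, say, $1-c$ or $c^{-1}-1$) is the one genuinely conformal-geometric computation in the lemma; citing ``the conformal relation between $\Delta_{cg}$ and $\Delta_g$'' without carrying it out leaves the central identity unverified.
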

Finally, \cite[Lemma  6.4]{Bellassoued11} is replaced by:
\begin{lemma}
\label{l:lemma_6.4_w}
There exists $C >0$ depending on $(M,g,N_0,\nu)$ and $C_0>0$ depending on 
$(M,g,\nu)$ such that, for any $b \in H^2(\partial_- SM_e)$ with $b \vert_{\mathcal{T}_+^{\pl SM}(T)} = 0$ the following estimate
\begin{align*}
\left \vert \int_{\partial_- SM_e} I_0^e(\rho)(y,v) b(y, v)   {\rm d}\mu_\nu(y,v) \right \vert & \\
 & \hspace*{-4cm} \leq C e^{C_0 T} \Big( (\lambda^{-1} +  \lambda^{3} \Vert \rho_0 \Vert_{\mathcal{C}^2(M)} ) \Vert \rho_0 \Vert_{\mathcal{C}^1(M)}  + \lambda \Vert \Lambda^W_g - \Lambda^W_{cg} \Vert_{*,\nu} \Big) \Vert b \Vert_{H^2(\partial_- SM)}
\end{align*}
holds for all $\lambda > 1$.
\end{lemma}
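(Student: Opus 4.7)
The plan is to run the proof of Lemma \ref{l:lemma5.2_w}, but with the bilinear bound stated immediately above the current lemma (the wave analog of Lemma 6.3 in \cite{Bellassoued11}) taking the role of Lemma \ref{l:lemma5.1_w}, and then to integrate the resulting fiberwise bound against the boundary measure on $\partial M_e$.

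Fix $y \in \partial M_e$ with lift $\tilde{y} \in \partial \widetilde{M}_e$. I construct two solutions $a_1^{(y)}, a_2^{(y)} \in H^1([0,T]; H^2(\widetilde{M}))$ of the transport equation \eqref{e:modified_transport_equation} associated to $\widetilde{\psi}_1 = d_{\tilde{g}}(\tilde{y},\cdot)$ in geodesic polar coordinates centered at $\tilde{y}$:
\[
a_1^{(y)}(t, \tilde x) := \alpha(r,v)^{-1/4}\phi(t-r)\,\tilde\mu(y, v)\,\chi_T(\tau_+^e(y,v)), \qquad a_2^{(y)}(t, \tilde x) := \alpha(r,v)^{-1/4}\phi(t-r)\,\tilde b(y,v),
\]
where $\tilde x = \widetilde\exp_{\tilde y}(rv)$, $\phi \in \mc{C}_c^\infty(\R)$ is a fixed bump with $\|\phi\|_{L^2(\R)} \neq 0$, and $\chi_T$ is the cutoff from \eqref{e:choice_of_b}. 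The hypothesis $b|_{\mc{T}_+^{\pl SM}(T)} = 0$ ensures the required vanishing of both lifted initial data on the corresponding subset of $\partial_- S\widetilde{M}_e$. Unfolding the $\pi_1(M)$-sum into an integral over $\widetilde{M}$, changing to polar coordinates, and integrating in $t$, one obtains the identity
\[
\sum_{\gamma \in \pi_1(M)} \int_0^T \int_M \rho(x)\,a_1^{(y)}(t, \gamma x)\,\overline{a_2^{(y)}(t, \gamma x)}\,{\rm dv}_g\,{\rm d}t = \|\phi\|_{L^2(\R)}^2 \int_{\partial_- S_y M_e} I_0^e(\rho)(y,v)\,b(y,v)\,\mu(y,v)\,{\rm d}\omega_y(v),
\]
the cutoff $\chi_T(\tau_+^e)$ being absorbed in the final integrand since $b$ already vanishes on $\mc{T}_+^{\pl SM}(T)$.

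Applying the bilinear estimate (wave analog of Lemma 6.3) to the left-hand side, together with the bound $\|a_i^{(y)}\|_* \leq C \|b(y,\cdot)\|_{H^2(\partial_- S_y M_e)}$ uniformly in $y$ — the factor $\|\tilde\mu\,\chi_T(\tau_+^e)\|_{H^2}$ being uniformly controlled by the estimate \eqref{bound_dtau} on derivatives of $\tau_+^e$, at the price of a factor $e^{O(T)}$ absorbed into the exponential prefactor — I derive the fiberwise estimate
\[
\Bigl|\int_{\partial_- S_y M_e} I_0^e(\rho)(y,v)\,b(y,v)\,\mu(y,v)\,{\rm d}\omega_y(v) \Bigr| \leq C e^{C_0 T}\bigl((\lambda^{-1} + \lambda^3\|\rho_0\|_{\mc{C}^2})\|\rho_0\|_{\mc{C}^1} + \lambda \|\Lambda_g^W - \Lambda_{cg}^W\|_{*,\nu}\bigr)\|b(y,\cdot)\|_{H^2},
\]
uniformly in $y \in \partial M_e$. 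Integrating this inequality over $y$ against the surface measure $d\sigma$, and using $d\mu_{{\rm n}} = \mu(y,v)\,d\omega_y(v)\,d\sigma(y)$, the left-hand side becomes $|\int_{\partial_- SM_e} I_0^e(\rho)\,b\,d\mu_{{\rm n}}|$; Cauchy--Schwarz combined with the compactness of $\partial M_e$ yields $\int_{\partial M_e} \|b(y,\cdot)\|_{H^2(\partial_- S_y M_e)}\,d\sigma(y) \leq C \|b\|_{H^2(\partial_- SM_e)}$, giving the claim.

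The main obstacle is preserving uniformity in $y$ throughout the construction: in particular, the $e^{O(T)}$ blow-up coming from derivatives of $\chi_T \circ \tau_+^e(y, \cdot)$ must be harmlessly absorbed into the uniform exponential prefactor $e^{C_0 T}$. This is guaranteed by the derivative bound \eqref{bound_dtau} on $\tau_+^e$, together with the fact that the constants in the bilinear estimate used for each fixed $y$ depend only on $(M_e, g_e, N_0, \nu)$ and not on the base point of the polar construction.
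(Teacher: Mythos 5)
Your proof is essentially correct and follows the route the paper tacitly intends: construct the amplitudes $a_1^{(y)}, a_2^{(y)}$ in polar coordinates about each $\tilde y$, unfold the $\pi_1(M)$-sum to obtain a multiple of $\int_{\partial_- S_y M_e} I_0^e(\rho)\,b\,\mu\,d\omega_y$, feed the left side into the preceding bilinear estimate (the wave analogue of \cite[Lemma 6.3]{Bellassoued11}), and integrate the resulting fiberwise bound over $y\in\partial M_e$ against $d\sigma$ via $d\mu_{\rm n}=\mu\,d\omega_y\,d\sigma$. This mirrors the mechanism in the proof of Lemma \ref{l:lemma5.2_w} and the Schr\"odinger version \ref{l:lemma_6.4}, and your closing remark correctly identifies the one point requiring care, namely the $y$-uniform bound on $\|\chi_T(\tau_+^e)\tilde\mu\|_{H^2}$ via \eqref{bound_dtau}.

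Two small precision points worth fixing. First, the cutoff you cite from \eqref{e:choice_of_b} is supported in $[0,T)$ and equal to $1$ only on $[0,T-1]$, so $\chi_T(\tau_+^e)\cdot b=b$ can fail on $\{T-1<\tau_+^e<T\}$; the cutoff you actually want is the one used inside the proof of Lemma \ref{l:lemma5.2}, which is $\equiv 1$ on $[0,T]$ and supported in $[0,T+1]$, so that $b|_{\mathcal T_+^{\partial SM}(T)}=0$ makes the absorption exact. Second, the inequality ``$\|a_i^{(y)}\|_*\leq C\|b(y,\cdot)\|_{H^2}$'' as literally written is wrong for $a_1^{(y)}$, which involves only $\tilde\mu$ and the cutoff; the correct split is $\|a_1^{(y)}\|_*\leq C e^{\theta T}$ (from \eqref{bound_dtau}) and $\|a_2^{(y)}\|_*\leq C\|b(y,\cdot)\|_{H^2(\partial_- S_y M_e)}$, whose product gives the stated right-hand side after enlarging $C_0$. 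Neither issue affects the structure of the argument, which is sound.
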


\begin{proof}[Proof of Theorem \ref{t:conformal_wave}]

We take $b$ as in \eqref{e:choice_of_b} with $q$ replaced by $\rho$. By Lemma \ref{l:lemma_6.4}, Lemma \ref{l:close_to_trapped}, and Lemma \ref{l:estimate_on_b},  there is $C>0$ depending on $(M,g,N_0,\nu)$ and $C_0>0$ depending on $(M,g)$ such that 
\begin{align*}
\Vert \Pi_0^e \rho \Vert_{L^2(M_e)}^2 & \\
 & \hspace*{-2cm} \leq C e^{C_0 T} \Big( (\lambda^{-1} + \lambda^3 \Vert \rho_0 \Vert_{\mathcal{C}^2(M)})  \Vert \rho_0 \Vert_{\mathcal{C}^1(M)} + \lambda \Vert \Lambda^W_{g} - \Lambda^W_{cg} \Vert_{*,\nu} \Big) \Vert \rho \Vert_{W^{1,p}(M)} + C e^{\frac{Q}{2} T} \Vert \rho \Vert_{L^p(M)}^2
\end{align*}
for some $p>2$. By interpolation,
\begin{align*}
\Vert \Pi_0^e \rho \Vert_{H^1(M_e)}^2 & \leq C \Vert \Pi_0^e \rho \Vert_{L^2(M_e)} \Vert \Pi_0^e \rho \Vert_{H^2(M_e)} \\
 & \leq C e^{\frac{C_0}{2} T} \Big( (\lambda^{-1} + \lambda^3 \Vert \rho_0 \Vert_{\mathcal{C}^2(M)})  \Vert \rho_0 \Vert_{\mathcal{C}^1(M)} + \lambda \Vert \Lambda^W_{g} - \Lambda^W_{cg} \Vert_{*,\nu} \Big)^{\frac{1}{2}} \Vert \rho \Vert^{\frac{3}{2}}_{\mc{C}^1(M)}  \\
 & \quad + C e^{\frac{Q}{4} T} \Vert \rho \Vert_{L^p(M)} \Vert \rho \Vert_{H^1(M)}.
\end{align*}
We use \eqref{estimatenormalop} to deduce the bound
\begin{align*}
\Vert  \rho \Vert_{L^2(M)}^2 & \leq C e^{\frac{C_0T}{2}}  (\lambda^{-1} + \lambda^3 \Vert \rho_0 \Vert_{\mathcal{C}^2(M)})^{\frac{1}{2}}  \Vert \rho_0 \Vert^{\frac{1}{2}}_{\mathcal{C}^1(M)} \Vert \rho \Vert^{\frac{3}{2}}_{\mc{C}^1(M)}  \\
 & \quad  + C e^{\frac{C_0}{2} T} \lambda^{\frac{1}{2}} \Vert \rho_0 \Vert_{\mathcal{C}^1(M)}^{\frac{3}{2}} \Vert \Lambda_{g}^W - \Lambda_{cg}^W \Vert_{*,\nu}^{\frac{1}{2}} + C e^{\frac{Q}{4} T} \Vert \rho \Vert_{L^p(M)} \Vert \rho \Vert_{H^1(M)}.
\end{align*}
Taking $\lambda = \Vert \rho_0 \Vert_{\mathcal{C}^2(M)}^{-\frac{1}{4}}$, we obtain for $T>0$ large:
\begin{align*}
\Vert  \rho \Vert_{L^2(M)}^2 & \leq C  e^{C_0 T}(  \Vert \rho_0 \Vert^{\frac{17}{8}}_{\mathcal{C}^2(M)} +  \Vert \rho_0 \Vert_{\mathcal{C}^2(M)}^{\frac{11}{8}} \Vert \Lambda^W_{g} - \Lambda^W_{cg} \Vert_{*,\nu}^{\frac{1}{2}}) + C e^{\frac{Q}{4} T} \Vert \rho \Vert_{L^p(M)} \Vert \rho \Vert_{H^1(M)} \\
 & \leq C e^{C_0T}  (\epsilon^{\ell} \Vert \rho_0 \Vert^{\frac{33}{16}}_{\mathcal{C}^2(M)} + \Vert \Lambda^W_{g} - \Lambda^W_{cg} \Vert_{*,\nu}^{\frac{1}{2}}) + C e^{\frac{Q}{4} T} \Vert \rho \Vert_{L^p(M)} \Vert \rho \Vert_{H^1(M)},
\end{align*}
where $\ell = \frac{1}{16}(1- \frac{2}{k} )$. Choosing $k> \max(s,s')$ large enough as in the previous section, we have for some $\delta,\delta'>0$ small the interpolation estimates \eqref{e:interpolation_I}, \eqref{e:interpolation_II}, and \eqref{e:interpolation_III}. Thus we get that there is $C>0$ depending on $(M,g,N_0,k)$ and $C_0>0$ depending on $(M,g)$ such that
\[
\Vert  \rho_0 \Vert_{L^2(M)}^2 \leq C  e^{C_0 T}  \epsilon^{\ell} \Vert \rho_0 \Vert^{2+ \delta}_{L^2(M)} + C  e^{C_0 T} \Vert \Lambda^W_{g} - \Lambda^W_{cg} \Vert_{*,\nu}^{\frac{1}{2}} + C e^{\frac{Q}{4} T} \Vert \rho_0 \Vert_{L^2(M)}^{2- 2 \delta'}.
\]
We finally take $T$ sufficiently large so that
\[
C e^{\frac{Q}{4} T} < \frac{1}{2} \Vert \rho \Vert_{L^2}^{2 \delta'}.
\]
This allows us to absorb the third term of the right-hand-side in the left-hand side:
\[
\Vert  \rho_0 \Vert_{L^2(M)}^2 \leq C  \epsilon^{\ell} \Vert \rho_0 \Vert_{L^2}^{-2m \delta'}  \Vert \rho_0 \Vert^{2+ \delta}_{L^2} + C  \Vert \rho_0 \Vert_{L^2}^{-2m \delta'} \Vert \Lambda^W_{g} - \Lambda^W_{cg} \Vert_{*,\nu}^{\frac{1}{2}},
\]
for $ m=C_0/4|Q| > 0$. Choosing $\delta >2m \delta'$ and taking $\epsilon$ sufficiently small (depending only on $\|1-c\|_{L^2}$), we can absorb the first term of the right-hand-side into the left-hand-side. Therefore, there exists $\beta > 0$ depending on $(M,g,\nu)$ and $C>0$ depending on $(M,g,N_0,k,\epsilon)$ such that
\[
\Vert  \rho_0 \Vert_{L^2(M)} \leq C  \Vert \Lambda_{g}^W - \Lambda_{cg}^W \Vert_{*,\nu}^{\beta},
\]
and the proof is complete.
\end{proof}

\section{appendix}\label{remarkMontalto} 
In this appendix, we discuss how to remove the boundary condition $q_1=q_2$ on $\pl M$ in Theorem \ref{t:potential_recovery_Schroedinger} 
and \ref{t:potential_recovery_wave}, at the cost of assuming more regularity on $q_j$.
\subsection{Boundary stability: the wave case} 
We follow the  argument of \cite[Section 3]{Montalto}.
Assuming that $q_1,q_2 \in \mathcal{C}^4(M)$, one can construct two geometrical optics solutions $(\pl_t^2+\Delta_g+q_j)u_j=0$ concentrated on a small geodesic close to a boundary point $x_0\in \pl M$
\begin{align*}
u_j = e^{i \lambda ( t - \psi)} (a_0 + \lambda^{-1} a_1^j+\lambda^{-2}a_2^j) + v_j  =: u_j^1 + v_j, \quad
\end{align*}
where $\psi$ is a suitable solution to the eikonal equation $\vert \nabla_g \psi \vert = 1$ with 
$\psi|_{U\cap \pl M}=x'.\, \omega$ for some $\omega\in T_{x_0}\pl M\simeq \R^{n-1}$ satisfying $|\omega'|<1$ (but close to $1$), $a_0$ solves a transport equation involving only the metric and some derivatives of $\psi$, $a_0|_{\pl M}=\chi\in \mc{C}_c^\infty((0,T')\times  \pl M;\R^+)$ is some cutoff function equal to $1$ near $(t_0,x_0)$, and $a_1^j,a_2^j$ solve
the transport equation
\begin{equation}\label{transportA_1j}
i\Big(2 \partial_t + 2\nabla^g\psi  - \Delta_g \psi\Big)a_k^j=
 -(\partial_t^2 + \Delta_g + q_j ) a_{k-1}^j, \quad a_k^j \vert_{\partial M} = 0.
 \end{equation}
By Lemma \ref{l:inhomogeneous_wave_lemma} one can construct the remainder term $v_j$ so that  
$\|\pl_{\rm n}v_j\|_{\mathcal{C}^0([0,T'];L^2(\pl M))}= \mc{O}(\lambda^{-2})$ for some small $T' > 0$, with uniform dependence wrt $\Vert q_j \Vert_{\mathcal{C}^4(M)}$ and $v_j|_{(0,T')\times \pl M}=0$.
Using that
\[
\Lambda^W_{g,q_j} (u_j \vert_{\partial M}) = e^{it \lambda (t - \psi)} \left( i\lambda (\partial_{x_n} \psi)a_0  - \partial_{x_n} a_0 -\lambda^{-1}\pl_{x_n}a_1 \right) + \mc{O}(\lambda^{-2}),
\]
that $\Vert u_j^1 \Vert_{H^1([0,T'] \times \partial M)} \leq C\lambda$, and choosing $\lambda \sim \Vert \Lambda^W_{g,q_1} - \Lambda^W_{g,q_2} \Vert_*^{-1/3}$, one gets the bound
\[
\left \Vert \partial_{x_n}a^1_1 - \partial_{x_n} a_1^2 \right \Vert_{L^2([0,T'] \times \partial M)} \leq C \Vert \Lambda^W_{g,q_1} - \Lambda^W_{g,q_2} \Vert_*^{1/3}.
\]
This implies $ 2\Big|\partial_{x_n} a^1_1- \partial_{x_n}a_1^2\Big|=|(q_1-q_2)\chi|$  by restricting \eqref{transportA_1j} to $(0,T')\times \pl M$. Therefore, there is an open neighborhood $V\subset U\cap \pl M$ of $x_0$  of uniform size and $C>0$ uniform in $\|q_j\|_{\mc{C}^4}$ such that
\[
\Vert q_1 - q_2 \Vert_{L^2(V)} \leq C \Vert \Lambda^W_{g,q_1} - \Lambda^W_{g,q_2} \Vert_*^{1/3}.
\]
Using interpolation and Sobolev embeddings, we get $\Vert q_1 - q_2\Vert_{\mathcal{C}^1(\partial M)} \leq C \Vert \Lambda^W_{g,q_1} - \Lambda^W_{g,q_2} \Vert_*^{\mu}$ for some $\mu>0$. We finally  replace $q_2$ by 
$\tilde{q}_2=q_2+\theta(\Vert \Lambda^W_{g,q_1} - \Lambda^W_{g,q_2} \Vert_*^{-\mu/2}d_g(\cdot,\pl M))(q_1-q_2)$ in Theorem \ref{t:potential_recovery_wave} for some $\theta\in \mc{C}_c^\infty([0,1))$ equal to $1$ near $0$ so that $\tilde{q}_2=q_1$ on $\pl M$.
\subsection{Boundary stability: the Schr\"odinger case} 
We use a similar argument for the Schr\"odinger equation. Assume that $q_j\in \mc{C}^8(M)$. We construct a geometric optic solution of the form 
\[u_j(t,x) = e^{i \lambda (\psi(t,x)-\lambda t)} (\sum_{k=0}^4a_k^j(2\lambda t,x)\lambda^{-k}) + v_j(t,x)  =: u_j^1(t,x) + v_j(t,x)\]
where $a_0^j(s,x)$ solves the same transport equation as for the wave equation (thus not depending on $q_j$) 
with $a_0^j|_{\pl M}=\chi$ and $a_k^j$ for $k\geq 1$ solves
\begin{equation}\label{transportA_kj}
i\Big(2 \partial_t + 2\nabla^g\psi  - \Delta_g \psi\Big)a_k^j(t,x)=
 (\Delta_g - q_j ) a_{k-1}^j(t,x), \quad a_k^j \vert_{\partial M} = 0
 \end{equation}
and the remainder can be estimated by $\|\pl_{\rm n}v_j\|_{L^2((0,T)\times \pl M)}=\mc{O}(\lambda^{-3})$ using \cite[Lemma 3.2]{Bellassoued10}
and the fact that $\|\lambda^{-4}e^{i\lambda(\psi-\lambda t)}((\Delta_g - q_j ) a_{4}^j)(2\lambda \cdot,\cdot))\|_{H^1([0,T];L^2(M)}=\mc{O}(\lambda^{-3})$ (we loose $\lambda^2$ from the $\pl_t$ derivative but gain one $\lambda^{-1}$ from the change of variable $t\mapsto t/\lambda$ in the $dt$ integral as $a_k^j$ are supported in time interval of size $\mc{O}(\lambda^{-1})$). We then obtain 
\[\Lambda^S_{g,q_j} (u_j \vert_{\partial M}) = 
e^{i \lambda (\psi-\lambda t)} \left( i\lambda (\partial_{x_n} \psi)a_0  - \partial_{x_n} a_0 -\lambda^{-1}\pl_{x_n}a_1 \right)(2\lambda t,x) + \mc{O}_{L^2}(\lambda^{-3})\]
Proceeding as for the wave equation, we deduce that 
\[\left \Vert \partial_{x_n}a^1_1 - \partial_{x_n} a_1^2 \right \Vert_{L^2([0,T] \times \partial M)} \leq C \Vert \Lambda^W_{g,q_1} - \Lambda^W_{g,q_2} \Vert_*^{1/3}\]
and the end of the proof is the same as for the wave equation.
\bibliography{Referencias}
\bibliographystyle{amsalpha}

\end{document}